\newcommand{\cmark}{\ding{51}}%
\newcommand{\xmark}{\ding{55}}%
\newcommand{\ignore}[1]{}
\DeclareMathOperator{\aff}{aff}
\DeclareMathOperator{\spn}{span}
\DeclareMathOperator{\supp}{support}
\DeclareMathOperator{\row}{row}
\DeclareMathOperator{\opt}{\mathrm{opt}}
\DeclareMathOperator{\vol}{\mathrm{vol}}
\newcommand{\rank}[1]{\mathrm{rank}\!\left(#1\right)}
\newcommand{\size}[1]{\mathrm{size}\!\left(#1\right)}
\newcommand{\1}{\mathbf{1}}
\newcommand{\0}{\mathbf{0}}
\newcommand{\bO}{\mathit O}
\newcommand{\Q}{\mathbb Q}
\newcommand{\R}{\mathbb R}
\newcommand{\Z}{\mathbb Z}
\newcommand{\cL}{\mathbb L}
\newcommand{\zS}{\mathcal S}
\newcommand{\plp}{p\text{-LP}}
\newtheorem{theorem}{Theorem}[section]
\newtheorem{CO}[theorem]{Corollary}
\newtheorem{LE}[theorem]{Lemma}
\newtheorem{CN}[theorem]{Conjecture}
\newtheorem{RE}[theorem]{Remark}
\newtheorem{EG}[theorem]{Example}
\newtheorem{DE}[theorem]{Definition}
\newtheorem*{MFT}{Minkowski's First Theorem}
\newtheorem*{JT}{Jacobi's Theorem}
\providecommand{\customgenericname}{}
\newcommand{\newcustomtheorem}[2]{%
  \newenvironment{#1}[1]
  {%
   \renewcommand\customgenericname{#2}%
   \renewcommand\theinnercustomgeneric{##1}%
   \innercustomgeneric
  }
  {\endinnercustomgeneric}
}
\newcounter{claim_nb}[theorem]
\newtheorem{claim}[claim_nb]{Claim}
\newtheorem*{claim*}{Claim}
\newcounter{claim_nbs}[section]
\newcounter{subclaim_nb}[claim_nbs]
\newenvironment{cproof}
{\begin{proof}
 [Proof of Claim.]
 \vspace{-1.2\parsep}}
{\renewcommand{\qed}{\hfill $\Diamond$} \end{proof}}
\title{Dyadic linear programming and extensions}
\author{Ahmad Abdi\thanks{Department of Mathematics, London School of Economics. Corresponding author, email address: a.abdi1@lse.ac.uk} \and
G\'erard Cornu\'ejols\thanks{Tepper School of Business, Carnegie Mellon University} \and
Bertrand Guenin\thanks{Department of Combinatorics \& Optimization, University of Waterloo} \and
Levent Tun\c{c}el\thanks{Department of Combinatorics \& Optimization, University of Waterloo}
}
\begin{document}

\maketitle

% !TEX root = main-gerard.tex

\begin{abstract}
A rational number is \emph{dyadic} if it has a finite binary representation $p/2^k$, where $p$ is an integer and $k$ is a nonnegative integer. Dyadic rationals are important for numerical computations because they have an exact representation in floating-point arithmetic on a computer. A vector is  \emph{dyadic} if all its entries are dyadic rationals. We study the problem of finding a dyadic optimal solution to a linear program, if one exists. We show how to solve dyadic linear programs in polynomial time.  We give bounds on the size of the support of a solution as well as on the size of the denominators.
We identify properties that make the solution of dyadic linear programs possible: closure under addition and negation, and density, and we extend the algorithmic framework beyond the dyadic case.\\

\noindent {\bf Keywords.} Linear programming, integer programming, dyadic rational, floating-point arithmetic, polynomial algorithm, dense abelian subgroup. 
\end{abstract}
% !TEX root = main.tex
%Last revised: Sep 6, 2023 by Ahmad
\section{Introduction}

A rational number is {\it dyadic} if it is an integer multiple of $\frac{1}{2^k}$ for some nonnegative integer $k$. Dyadic numbers are important
for numerical computations because they have a finite binary representation, and therefore they
can be represented exactly on a computer in floating-point arithmetic.
When real or rational numbers are approximated by dyadic numbers on a computer, approximation errors may propagate and accumulate throughout the computations. So it is natural to ask when linear programs have a dyadic optimal solution.
More generally, the exact solution of linear programs with rational input data has lead to interesting work; we just mention here the excellent dissertation of Espinoza \cite{espinoza}. In this paper we investigate a different direction.

A vector $x$ is {\it dyadic} if all its entries are dyadic numbers.
A \emph{dyadic linear program} is an optimization problem of the form $$
\sup\left\{w^\top x: Ax\leq b, x \text{ dyadic}\right\}
$$ where $A,b,w$ have integral entries.

Note that we do not restrict
ourselves to fixed precision; we just require a finite number of bits in the binary representation.  This is an important point as we will see that it makes the problem tractable. On the other hand, if the vector $x$ in the dyadic linear program were restricted to be of the form $\frac{y}{2^k}$ for an integral vector $y$ and a nonnegative integer $k$ bounded above by a given value $K$, then the problem would be a classical integer linear program. Indeed the problem can then be written as $\frac{1}{2^K}\cdot\max\left\{w^\top y: Ay\leq 2^K b, y \text{ integral}\right\}$.

Some natural questions about dyadic linear programs are: When is the problem feasible? Can we check feasibility in polynomial time? If the problem is infeasible, can we provide a certificate of infeasibility?
When does a dyadic linear program have an optimal solution? 
Note that a dyadic linear program may be feasible and bounded, but not have an optimal solution (in dimension one, $\sup\left\{x: 3x\leq 1, x \text{ dyadic}\right\}$ is such an example).
Can
dyadic linear programs be solved in polynomial time? What is the size of the dyadic numbers in a solution when one exists?  What is the support size of a solution? This paper addresses these questions. In particular, we show that dyadic linear programs can be solved in polynomial time.

The interest in dyadic linear programming stems not only from the computer science perspective mentioned above, but also from mathematics and from optimization. Take a mathematical point of view: Given a prime integer $p \geq 2$, we say that a rational number is {\em finitely $p$-adic} if it is of the form $\frac{r}{p^k}$ for some integer $r$ and nonnegative integer $k$. This concept is closely related to the notion of the \emph{$p$-adic numbers} introduced by Hensel, formally defined as the set of ``finite-tailed" infinite series $\sum_{i=N}^{+\infty} a_i p^i$ where $N\in \mathbb{Z}$, and $a_i \in \mathbb{Z}$ and $0 \leq a_i < p$ for each $i\geq N$.\footnote{The $p$-adic numbers form a field extension of the rational numbers, albeit in a different way than the real numbers.} The study of $p$-adic numbers gives rise to beautiful and powerful mathematics; see the excellent book by Gouv\^ea for more \cite{gouvea}. It can be readily checked that the set of finitely $p$-adic numbers is the set of finite series of the form $\sum_{i=N}^{M} a_i p^i$, where $M,N\in \mathbb{Z}$, $M\geq N$, and $0 \leq a_i < p, a_i \in \mathbb{Z}$ for all $N\leq i\leq M$, justifying our terminology.\footnote{The set of finitely $p$-adic numbers does not form a field. For instance, $3$ is finitely $2$-adic, but $1/3$ is not.}
In this paper, we will only deal with finitely $p$-adic numbers; for simplicity we refer to them as {\em $p$-adic numbers} throughout the paper. Also we refer to ``$2$-adic" as ``dyadic". More generally, we say that a rational number is \emph{$[p]$-adic} if it is of the form $\frac{r}{s}$ where $r$ is an integer and $s$ is a product of powers of primes between 2 and $p$. These numbers appear naturally in some of the theorems in this paper. For the optimization point of view, let us mention an intriguing conjecture of Seymour dating back to 1975; see Schrijver \cite{Schrijver03} 79.3e. Let $A$ be a 0,1 matrix such that the set covering polyhedron $Ax \geq {\bf 1}, x \geq 0$ has only integral vertices, where ${\bf 1}$ denotes the vector of all 1s. Thus the linear program
$\min\left\{c^\top x: Ax\geq {\bf 1}, x \geq 0\right\}$ has an optimal 0,1 solution for any objective function $c \in \mathbb{Z}^n_+$. Seymour conjectured that the dual linear program $\max\left\{{\bf 1}^\top y: A^\top y\leq c,  y \geq 0 \right\}$ always has a dyadic optimal solution $y$. This conjecture is still open but is known to hold in a few important special cases. For example, when $A$ is the $T$-cut versus edge incidence matrix of a graft, an optimal dual solution $y$ is $\frac{1}{2}$-integral  (Lov\'asz \cite{Lovasz1976}). Seymour's conjecture was proved recently in a couple of other special cases \cite{Abdi-dyadic}, \cite{Abdi-Tjoins}.

We will show in this paper that dyadic linear programming shares aspects of classical linear programming as well as certain aspects of integer programming. In particular, we will show that, just like linear programs, dyadic linear programs can be solved in polynomial time. However, when it comes to the support size of a solution, the situation is more akin to that in integer programming. Indeed, in classical linear programming, for a problem in standard equality form $x\geq 0,  Ax = b$, if it has an optimal solution, there is a basic optimal solution with at most $m$ nonzero entries where $m$ is the row rank of the constraint matrix $A$. For integer programs, the support size may be superlinear in $m$, and a similar situation occurs for dyadic linear programs.
Next, we present an outline of the paper.

In \Cref{section:foundation}, we present two key ideas that make the polynomial solution of dyadic linear programs possible. The first ingredient is the density of the dyadic numbers in the real line. This property enables us to convert the feasibility question for a dyadic linear program to that of the existence of a dyadic point in an affine space. Specifically, we show that, if $\cL$ is a dense subset of the real line closed under addition and negation, and $P$ is a nonempty convex set whose affine hull is rational, then $P$ contains a point in $\cL ^n$ if and only if  its affine hull $\aff(P)$ does. This equivalence begs the question: When does the affine space  $\aff(P)$ contain a point in $\cL ^n$?

The second key ingredient is a theorem of the alternatives. This theorem allows us to answer the above question.  Specifically, consider a matrix  $A\in\Z^{m\times n}$ and a vector $b\in \Z^m$. Then exactly one of the following holds: (a) $Ax = b$ has a solution in $\cL ^n$, or (b) there exists $u \in \mathbb{R}^m$ such that $A^\top u \in \mathbb{Z}^n$ and $b^\top u \not\in \cL$.
This theorem of the alternative is reminiscent of the so-called ``integer Farkas lemma" and it can be proved in a similar way, using the Hermite normal form of $A$. Because the Hermite normal form of an integral matrix can be found in polynomial time (Kannan and Bachem \cite{Kannan79}), one can obtain a polynomial certificate for statements (a) or (b), whichever holds.

These two are the basic ingredients we need to check feasibility of dyadic linear programs in polynomial time.

In \Cref{sec-optimization}, we present an algorithm to solve dyadic linear programs. There are four possible outcomes for this optimization problem: (i) the problem is infeasible, (ii) the problem is unbounded, (iii) the problem has an optimal solution, (iv) the problem is feasible and bounded but has no optimal solution. We show how to decide in polynomial time which of these outcomes is the correct one and, in each case, we give a concise (polynomial size) certificate. The complexity of our algorithm is (up to a constant factor) the same as that of solving an ordinary linear program. Our results in this section are extended beyond the dyadic numbers to any subset $\mathbb{L}\subseteq \mathbb{R}$ that is closed under addition and negation, contains all the $p$-adic numbers for some prime $p$, and is equipped with a membership oracle.

In \Cref{section:fractionality}, we focus on the size of the denominators of a solution to a feasible dyadic linear program. In particular, we show that if $Ax\leq b, x \text{ dyadic}$ is feasible, where $A\in \Z^{m\times n}$ and $b\in \Z^m$, then there exists a $\frac{1}{2^k}$-integral solution, where $k\leq \left\lceil\log_2 n+ (2n+1)\log_2(\|A\|_\infty\sqrt{n+1})\right\rceil$. Here $\|A\|_\infty$ denotes the largest absolute value of an entry in $A$.

In \Cref{section:support}, we study the size of the support of a solution to a dyadic linear program. That is, we consider the smallest number of nonzero components in a dyadic solution. Surprisingly, the lower and upper bounds that we obtain on the smallest support size resemble results for integer programming and are very different from the value of the support size given by Carath\'eodory's theorem for classical linear programming. Specifically, let $A\in \Z^{m\times n},b\in \Z^m$ and $w\in \R^n$. If $\min\{w^\top x:Ax=b,x\geq \0,x \text{ dyadic}\}$ has an optimal solution, then we show that it has one with support size at most $m(1+0.84\ln{m}+1.68\ln\|A\|_\infty)$. We give lower bounds by constructing examples and show that they are extremal in some sense. 

\Cref{section:conclusion} provides conclusions and possible directions for future research.
% !TEX root = main.tex
% last revised Sep 6, 2023 by Ahmad
%%%%%%%%%%%%%%%%%%%%%%%%%%%%%%%%%%%
\section{Foundational results} \label{section:foundation}
%%%%%%%%%%%%%%%%%%%%%%%%%%%%%%%%%%%
In this section, we identify two results that underpin much of the work in the paper, and are key to solving dyadic linear programs. The first result reduces \emph{dyadic feasibility} of a rational polyhedron to that of its affine hull, while the second result provides a \emph{theorem of the alternatives} for dyadic feasibility of a linear system of equations. We then combine the two results to obtain a \emph{certificate of dyadic infeasibility} of a rational polyhedron. The results in this section are presented not only for the dyadic numbers, but more generally for any subset $\cL$ of $\R$ closed under addition and negation, where sometimes we require $\cL$ to be dense in $\R$, and at other times $\cL\neq \R$. Let us lay the groundwork.

%--------------------------------------------------------------------------------
\subsection{Abelian subgroups, $p$-adic and $[p]$-adic numbers}\label{sec-subgroup}
%--------------------------------------------------------------------------------

Let $\cL\subseteq \R$. We say that $\cL$ is {\em closed under addition} if $x+y\in \cL$ for all $x,y\in\cL$, and that $\cL$ is {\em closed under negation}, or equivalently {\em symmetric around the origin}, if $-x\in \cL$ for all $x\in\cL$. Clearly, if $\cL$ is closed under addition and negation, then $ax\in \cL$ for all $a\in \Z$ and $x\in\cL$. In particular, if $\cL\neq \emptyset$ then $0\in \cL$.

The condition that $\cL\subseteq \R$ is closed under addition and negation is equivalent to requiring that $(\cL,+)$ forms an abelian subgroup of $(\R,+)$. It can be readily checked that for any such nonempty set, either $\cL$ is dense in $\R$, or $\cL$ is of the form $\{ax:a\in\Z\}$ for some $x\in\R$. In the latter case, $\cL$ is isomorphic to $\Z$, in which case checking if a rational polyhedron in $\R^n$ contains a point in $\cL^n$ is equivalent to integer programming feasibility in dimension $n$. 

The discussion above motivates us to focus on dense sets in $\R$ closed under addition and negation. We present a construction for such sets, but we first require a definition. A subset $\zS$ of positive integers is {\em closed under multiplication} if $pq\in\zS$ for all $p,q\in\zS$. Given such a subset, note that $\zS$ is finite if and only if $\zS\subseteq \{1\}$.
\begin{LE}\label{L-construction}
Let $\zS$ be an infinite set of positive integers that is closed under multiplication. Let $\cL$ be the set of all numbers $\frac{r}{s}$ where $s\in\zS$ and $r\in\Z$. Then $\cL$ is dense in $\R$ and closed under addition and negation.
\end{LE}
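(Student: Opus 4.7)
The plan is to verify the three properties (closure under negation, closure under addition, density in $\R$) in turn, with essentially no obstacle beyond a clean argument for density.

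Closure under negation is immediate: if $x = r/s$ with $r \in \Z$ and $s \in \zS$, then $-x = (-r)/s$ also lies in $\cL$ since $-r \in \Z$. For closure under addition, I take $x_1 = r_1/s_1$ and $x_2 = r_2/s_2$ with $r_i \in \Z$ and $s_i \in \zS$, and compute
\[
x_1 + x_2 \;=\; \frac{r_1 s_2 + r_2 s_1}{s_1 s_2}.
\]
The numerator is an integer, and the denominator lies in $\zS$ precisely because $\zS$ is closed under multiplication, so the sum is in $\cL$.

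The only step with any substance is density. The key observation is that, since $\zS$ is an infinite set of positive integers, it is unbounded, so it contains arbitrarily large elements. Given $a < b$ in $\R$, I would pick some $s \in \zS$ with $s > 1/(b-a)$, so that the open interval $(sa, sb)$ has length strictly greater than $1$ and therefore contains an integer $r$. Then $a < r/s < b$ exhibits a point of $\cL$ strictly between $a$ and $b$, establishing density.

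I expect no real obstacle in this proof; the only thing that might be worth a brief justification is why an infinite set of positive integers must be unbounded (equivalently, contain arbitrarily large elements), which is immediate since each positive integer $N$ can be exceeded only by countably many integers bounded below by $N+1$, whereas the set of positive integers not exceeding $N$ is finite. Everything else is a routine calculation.
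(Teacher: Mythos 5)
Your proof is correct and takes essentially the same approach as the paper: closure under addition and negation via the same direct computations, and density via the unboundedness of $\zS$ (the paper phrases the density step with $r=\lfloor sx\rfloor$ and an $\epsilon$-approximation rather than an interval of length greater than one, but the idea is identical).
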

\begin{proof}
Pick $x\in\R$ and $\epsilon>0$.
Since $\zS$ is an infinite set of positive integers, there exists $s\in\zS$ with $s\geq\frac{1}{\epsilon}$.
Let $r=\lfloor sx\rfloor$, then $|x-\frac{r}{s}|<\epsilon$. 
Since $\frac{r}{s}\in\cL$, it follows that $\cL$ is a dense subset of $\R$.
Since $\zS$ is closed under multiplication, if $\frac{r}{s},\frac{r'}{s'}\in\cL$, then their sum $\frac{rs'+r's}{ss'}\in\cL$, so $\cL$ is closed under addition.
Finally, if $\frac{r}{s}\in\cL$ then $\frac{-r}{s}\in\cL$, so $\cL$ is closed under negation.
\end{proof}

We can use this result to provide two important examples of dense sets in $\R$ closed under addition and negation.

\begin{DE}
Let $p\geq 2$ be a prime number. We say that a rational number is {\em $p$-adic} if it is of the form $\frac{r}{s}$ where $s$ is a power of $p$ and $r\in\Z$. We say that a rational number is {\em $[p]$-adic} if it is of the form $\frac{r}{s}$ where $s$ is a product of powers of primes between $2$ and $p$, and $r\in\Z$.
\end{DE}

Observe that the set of $p$-adic numbers can be obtained from the construction in \Cref{L-construction} by choosing $\zS=\{p^n:n\geq\0,n\in\Z\}$, while the set of $[p]$-adic numbers is obtained by choosing $\zS$ to consist of all positive integers with prime factors less than or equal to $p$. Thus, both sets are dense in $\R$ and closed under addition and negation, by \Cref{L-construction}. Observe further that when $p=2$, the $p$-adic, $[p]$-adic, and dyadic numbers coincide. More generally, we have the following relation between $p$-adic and $[p]$-adic sets.
\begin{theorem}
Let $p\geq 2$ be an integer. Then the following are equivalent for a subset $\cL\subseteq\R$:
\begin{enumerate}[\;\;(1)]
\item
$\cL$ is the set of $[p]$-adic numbers,
\item 
$\cL$ is an inclusion-wise minimal set closed under addition and negation that contains all $q$-adic numbers for primes $q\leq p$.
\end{enumerate}
\end{theorem}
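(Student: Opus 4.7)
The plan is to show both implications by establishing that the set of $[p]$-adic numbers is in fact the unique minimum (not just inclusion-wise minimal) set closed under addition and negation containing all $q$-adic numbers for primes $q\leq p$. Denote this set of $[p]$-adic numbers by $\cL^\star$.

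First I would verify that $\cL^\star$ has the required properties. By \Cref{L-construction} applied to $\zS:=\{$ positive integers whose prime factors are all $\leq p\}$ (which is infinite and closed under multiplication), $\cL^\star$ is closed under addition and negation. For each prime $q\leq p$, every $q$-adic number has denominator $q^k\in\zS$, so the $q$-adic numbers lie in $\cL^\star$.

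The heart of the argument is the converse containment: if $\cL$ is any subset of $\R$ closed under addition and negation and containing every $q$-adic number for every prime $q\leq p$, then $\cL^\star\subseteq\cL$. Pick an arbitrary element of $\cL^\star$, say $\frac{r}{s}$ where $s=q_1^{a_1}\cdots q_k^{a_k}$ is a factorization into distinct primes $q_i\leq p$ with exponents $a_i\geq 1$. For each $i$ let $d_i:=s/q_i^{a_i}\in\Z$; by construction $q_i\nmid d_i$ but $q_i\mid d_j$ for every $j\neq i$, so $\gcd(d_1,\dots,d_k)=1$. By B\'ezout's identity there exist integers $c_1,\dots,c_k$ with $\sum_i c_i d_i=1$, whence
\[
\frac{r}{s}=\sum_{i=1}^k \frac{r c_i d_i}{s}=\sum_{i=1}^k \frac{r c_i}{q_i^{a_i}}.
\]
Each summand is a $q_i$-adic number and hence lies in $\cL$, and since $\cL$ is closed under addition, $\frac{r}{s}\in\cL$. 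This is the main technical step; the only mild subtlety is ensuring the denominators $d_i$ are globally coprime, which is exactly why we split the denominator into its prime-power factors rather than one prime at a time.

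With this containment in hand, both implications follow quickly. For (1)$\Rightarrow$(2): $\cL^\star$ satisfies the listed closure and containment conditions, and by the B\'ezout argument no proper subset of $\cL^\star$ can satisfy them, so $\cL^\star$ is inclusion-wise minimal. For (2)$\Rightarrow$(1): suppose $\cL$ is inclusion-wise minimal with the stated properties. Then by the B\'ezout argument $\cL^\star\subseteq\cL$, and since $\cL^\star$ itself is closed under addition and negation and contains all the required $q$-adic numbers, the minimality of $\cL$ forces $\cL=\cL^\star$. Thus $\cL^\star$ is in fact the unique inclusion-wise minimal such set, and the equivalence is proved.
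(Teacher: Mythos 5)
Your proposal is correct and follows essentially the same route as the paper: verify via \Cref{L-construction} that the $[p]$-adic numbers form a candidate set, then use B\'ezout's identity on the pairwise coprime cofactors $s/q_i^{a_i}$ of the denominator to show every $[p]$-adic number lies in any candidate set, so the $[p]$-adic numbers are the unique minimum. The only (immaterial) difference is that the paper first reduces to elements of the form $1/Q$ before applying B\'ezout, whereas you carry the numerator $r$ through directly.
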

\begin{proof}
Denote by $\cL_1$ the set $\cL$ as defined by (1) and by $\cL_2$ the set $\cL$ as defined by (2). We need to show $\cL_1=\cL_2$. 
Note that $\cL_2\subseteq\cL_1$ since
(i) every $q$-adic number for a prime $q\leq p$ is $[p]$-adic by definition, and since
(ii) $\cL_1$ is closed under addition and negation by \Cref{L-construction}.
We need to show that every point in $\cL_1$ is in $\cL_2$.
Since $\cL_2$ is closed under addition and negation, it suffices to consider points of the form $\frac1{Q}\in\cL_1$ where $Q\in\Z$, $Q\geq 1$.
As $1\in\cL_2$ we may assume $Q\geq 2$.
Then $Q=\prod_{i=1}^rq_i^{\alpha_i}$ where $q_1,\ldots,q_r\leq p$ are distinct primes and $\alpha_i\geq 1$, $\alpha_i\in\Z$ for all $i\in[r]$.
For all $i\in[r]$ define, $Q_i:=Qq_i^{-\alpha_i}$.
Then $Q_1,\ldots,Q_r$ are relatively prime and so it follows from B\'{e}zout's lemma that there exist $\rho_1,\ldots,\rho_r\in\Z$ such that $\sum_{i=1}^r\rho_iQ_i=1$. 
Observe that,
\[
\sum_{i=1}^r \rho_i q_i^{-\alpha_i}=\frac{1}{Q}\sum_{i=1}^r\rho_iQ_i=\frac{1}{Q}.
\]
Each of the terms on the left hand side of the previous expression is in $\cL_2$.
Since $\cL_2$ is closed under addition, it follows that $\frac{1}{Q}\in\cL_2$, as required. 
\end{proof}

%--------------------------------------------------------------------------------
\subsection{Density and affine hulls}\label{sec-density}
%--------------------------------------------------------------------------------
In \cite{TDD, acgt23} we proved that a rational polyhedron contains a dyadic point if and only if its affine hull does. This is a special case of the following more general result. 

\begin{theorem}\label{density-gen}
Let $\cL$ be a dense subset of $\R$ that is closed under addition and negation. 
Consider $P\subseteq\R^n$ where (i) the relative interior of $P$ is non-empty, and (ii) the affine hull $\aff(P)$ of $P$ is a translate of a rational subspace.
Then $P\cap\cL^n\neq\emptyset$ if and only if $\aff(P)\cap\cL^n\neq\emptyset$.
\end{theorem}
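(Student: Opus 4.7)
The plan is to prove the nontrivial implication; the forward direction is immediate because $P\subseteq\aff(P)$. So assume $\aff(P)\cap\cL^n\neq\emptyset$ and pick a basepoint $x_0\in\aff(P)\cap\cL^n$. Since $\aff(P)$ is a translate of a rational subspace and contains $x_0$, we have $\aff(P)=x_0+V$ for a rational linear subspace $V\subseteq\R^n$. Choose a basis $v_1,\ldots,v_d$ of $V$ consisting of integer vectors (clear denominators).

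Now consider the set
\[
S:=\bigl\{x_0+\textstyle\sum_{i=1}^d \ell_i v_i : \ell_1,\ldots,\ell_d\in\cL\bigr\}.
\]
Every element of $S$ lies in $\aff(P)$ by construction, and every element of $S$ lies in $\cL^n$: indeed $x_0\in\cL^n$, each $v_i$ is an integer vector, and $\cL$ is closed under addition and negation so it is closed under integer scalar multiples; thus $\ell_i v_i\in\cL^n$ and the sum of finitely many such vectors remains in $\cL^n$. Therefore $S\subseteq\aff(P)\cap\cL^n$.

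The next step is to argue that $S$ is dense in $\aff(P)$ in the subspace topology. Given any $y=x_0+\sum_i\beta_i v_i\in\aff(P)$ and any $\epsilon>0$, by density of $\cL$ in $\R$ we can choose $\ell_i\in\cL$ with $|\ell_i-\beta_i|$ small enough that the vector $\sum_i (\ell_i-\beta_i) v_i$ has Euclidean norm less than $\epsilon$. This shows $S$ is dense in $\aff(P)$.

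Finally, since the relative interior of $P$ is nonempty, there is a point $y\in P$ and a radius $\epsilon>0$ such that the relatively open ball $\{z\in\aff(P):\|z-y\|<\epsilon\}$ is contained in $P$. By the density argument above, this ball contains a point of $S$, which is a point of $\cL^n\cap P$. The main subtlety worth flagging is the distinction between ``closed under addition and negation'' (i.e.\ an abelian subgroup, which only gives integer-linear closure) and ``closed under rational multiples''; this is precisely why we insist on integer basis vectors $v_i$ rather than an arbitrary rational basis, so that $\ell_i v_i$ stays in $\cL^n$. Everything else is routine density and relative-interior topology.
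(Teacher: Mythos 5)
Your proof is correct and follows essentially the same route as the paper: anchor at a point of $\aff(P)\cap\cL^n$, use an integer basis of the rational direction space so that $\cL$-coefficient combinations stay in $\cL^n$, show these combinations are dense in $\aff(P)$, and intersect with a relatively open ball supplied by the nonempty relative interior. The paper merely packages the density/approximation step as a separate lemma (\Cref{find-xip}) because a quantitative version of it is reused later in the algorithms, but the argument is the same.
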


To prove this theorem, we need a technical lemma. To state it we need some notations. For a vector $v$ and $1\leq q\leq+\infty$, $\|v\|_q$ denotes the $q$-norm of $v$. Given $\epsilon>0$ and a vector $\bar{x}$, we write $B_q(\bar{x},\epsilon)$ for the closed $q$-norm ball $\left\{x:\|x-\bar{x}\|_q\leq\epsilon\right\}$ centred at $\bar{x}$ with radius $\epsilon$.
\begin{LE}\label{find-xip}
Let $\cL$ be a dense subset of $\R$ that is closed under addition and negation, and let $P\subseteq\R^n$.
Suppose that $\aff(P)=z+\spn\{d^1,\ldots,d^\ell\}$ for some $z\in\R^n$ and $d^1,\ldots,d^\ell\in\Z^n$, in particular, $\aff(P)$ is a translate of a rational subspace. Consider $z'\in \aff(P)$ and $\epsilon>0$. 
Then the following statements hold for any $1\leq q\leq +\infty$:
\begin{enumerate}[\;\;(a)]
\item There exists $\bar{\rho}\in\cL^n$ such that $z+\bar{\rho}\in\aff(P)\cap B_q(z',\epsilon)$.
\item
If for some prime $p$, $\cL$ contains all the $p$-adic numbers, we can find in (a) an explicit $\bar{\rho}$ that is $p$-adic.
Namely, pick a nonnegative integer $r$ for which
\begin{equation}\label{choice-r}
p^r\geq\frac{\ell\max\{\|d^1\|_q,\ldots,\|d^\ell\|_q\}}{\epsilon}.
\end{equation}
Pick $\alpha\in \R^\ell$ satisfying 
\begin{equation}\label{choice-alpha}
z+\sum_{i=1}^\ell\alpha_id^i=z'. 
\end{equation}
Then we can choose
\begin{equation}\label{choice-rho}
\bar{\rho} := \sum_{i=1}^\ell \frac{\lfloor p^r\alpha_i\rfloor}{p^r}d^i.
\end{equation}
Furthermore, for this choice of $\bar{\rho}$ we have $\|p^r\bar{\rho}\|_q\leq p^r(\|z'-z\|_q+\epsilon)$.
\end{enumerate}
\end{LE}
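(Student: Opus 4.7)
The approach is to first verify the explicit construction in part (b) by direct computation, and then obtain part (a) by an approximation argument using the density of $\cL$.

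For part (b), I would check four properties in sequence. First, that $\bar{\rho}\in\cL^n$: each coefficient $\lfloor p^r\alpha_i\rfloor/p^r$ is a $p$-adic rational by construction, and since the $d^i$ are integral and $p$-adic rationals form a group under addition, each entry of $\bar{\rho}$ is $p$-adic, hence lies in $\cL$ by hypothesis. Second, that $z+\bar{\rho}\in\aff(P)$: immediate from $(\ref{choice-rho})$, since $\bar{\rho}\in\spn\{d^1,\ldots,d^\ell\}$. Third, that $z+\bar{\rho}\in B_q(z',\epsilon)$: subtracting $(\ref{choice-rho})$ from $(\ref{choice-alpha})$ and applying the triangle inequality,
$$\|z'-(z+\bar{\rho})\|_q \;\leq\; \sum_{i=1}^\ell \left|\alpha_i-\tfrac{\lfloor p^r\alpha_i\rfloor}{p^r}\right|\|d^i\|_q \;\leq\; \frac{\ell\max_i\|d^i\|_q}{p^r} \;\leq\; \epsilon$$
by the choice of $r$ in $(\ref{choice-r})$. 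Fourth, the bound on $\|p^r\bar{\rho}\|_q$ follows from a further triangle inequality
$$\|p^r\bar{\rho}\|_q \;\leq\; \|p^r\bar{\rho}-p^r(z'-z)\|_q+\|p^r(z'-z)\|_q \;\leq\; p^r\epsilon + p^r\|z'-z\|_q,$$
where the first term equals $p^r\|z+\bar{\rho}-z'\|_q$ and is bounded by $p^r\epsilon$ using the previous step.

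For part (a), fix any $\alpha\in\R^\ell$ satisfying $(\ref{choice-alpha})$. Density of $\cL$ lets us select, for each $i$, a value $\beta_i\in\cL$ with $|\beta_i-\alpha_i|$ smaller than $\epsilon/(\ell M)$, where $M=\max_j\|d^j\|_q$ (the case $M=0$ forces $z'=z$ and $\bar{\rho}=\0$ works). Setting $\bar{\rho}:=\sum_i\beta_id^i$, the same triangle-inequality estimate as in (b) gives $\|z+\bar{\rho}-z'\|_q\leq\epsilon$, and $z+\bar{\rho}\in\aff(P)$ by construction. The vector $\bar{\rho}$ lies in $\cL^n$ because $\cL$ is closed under addition and negation and each $d^i\in\Z^n$, so each entry of $\bar{\rho}$ is a $\Z$-linear combination of elements of $\cL$.

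There is no real obstacle here; the lemma is essentially a quantitative packaging of the fact that $\cL$-linear combinations of integer vectors are dense in any rational subspace. The one subtlety worth flagging is that it is $\bar{\rho}$ itself, not the point $z+\bar{\rho}$, that is required to be $\cL$-valued: the translate $z$ is an arbitrary representative of the affine hull, so the approximating point in $\aff(P)$ need not lie in $\cL^n$ at all. This separation is exactly what will let the next result (\Cref{density-gen}) reduce $\cL$-feasibility of $P$ to $\cL$-feasibility of $\aff(P)$.
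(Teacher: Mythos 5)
Your proof is correct and follows essentially the same route as the paper's: the same triangle-inequality estimate $\|z+\rho(\beta)-z'\|_q\leq\sum_i|\beta_i-\alpha_i|\,\|d^i\|_q$, the same choice $\beta_i=\lfloor p^r\alpha_i\rfloor/p^r$ for (b), and the same density argument for (a); the only difference is the order of presentation (and your explicit handling of the degenerate case $\max_j\|d^j\|_q=0$, which the paper leaves implicit).
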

\begin{proof}
Given $\beta\in \R^\ell$ denote $
\rho(\beta):=\sum_{i=1}^\ell\beta_i d^i.
$ Then, since $z' \in \aff(P)$, we have $z+\rho(\alpha)=z'$ for some $\alpha\in \R^\ell$. We need the claim below.
\begin{claim*}
$z+\rho(\beta)\in B_q(z',\epsilon)$ if 
\begin{equation}\label{beta-alpha}
|\beta_i-\alpha_i|\leq\frac{\epsilon}{\ell\max\{\|d^1\|_q,\ldots,\|d^\ell\|_q\}}
\qquad \text{ for } i=1,\ldots,\ell.
\end{equation}
\end{claim*}
\begin{cproof}
We need to show that $\|z+\rho(\beta)-z'\|_q\leq\epsilon$. We have,
\begin{align*}
\| z+\rho(\beta)-z'\|_q = & \|\rho(\beta)-\rho(\alpha)\|_q =
\left\|\sum_{i=1}^\ell(\beta_i-\alpha_i) d^i\right\|_q \\
\leq &
\sum_{i=1}^\ell \left\|(\beta_i-\alpha_i) d^i\right\|_q = 
\sum_{i=1}^\ell |\beta_i-\alpha_i| \; \|d^i\|_q \leq \epsilon,
\end{align*}
where the first inequality arises from the triangle inequality and the second inequality from \eqref{beta-alpha}.
\end{cproof}

{\bf (a)}
Since $\cL$ is dense in $\R$, we can pick $\beta_i\in\cL, i\in [\ell]$ such that \eqref{beta-alpha} holds. Let $\bar{\rho}:=\rho(\beta)$.
Since $\cL$ is closed under addition and negation, and since $\beta_i\in\cL$ and $d^i$ is integral for each $i\in [\ell]$, we have $\bar{\rho}\in\cL^n$.
By the claim, $z+\bar{\rho}\in B_q(z',\epsilon)$.
Moreover, since $\aff(P)=z+\spn\{d^1,\ldots,d^\ell\}$, we have $z+\bar{\rho}\in\aff(P)$.
Thus, $z+\bar{\rho}\in \aff(P)\cap B_q(z',\epsilon)$, so (a) holds.

{\bf (b)}
Suppose now $\cL$ contains all the $p$-adic numbers. 
Pick $r$ so that \eqref{choice-r} holds and for each $i=1,\ldots,\ell$ let $\beta_i:=\frac{\lfloor p^r\alpha_i\rfloor}{p^r}$.
Then $|\beta_i-\alpha_i|\leq\frac1{p^r}$. 
Equation \eqref{choice-r} then implies that \eqref{beta-alpha} holds.
It follows from the claim that for $\bar{\rho}:=\rho(\beta)$, we have $z+\bar{\rho}\in\aff(P)\cap B_q(z',\epsilon)$.

Moreover, as $\beta_i$ is $p$-adic and $d^i$ is integral for each $i\in [\ell]$, $\bar{\rho}$ is $p$-adic.
By the triangle inequality, $\|\bar{\rho}\|_q\leq \|z'-z\|_q+\|\bar{\rho}+z-z'\|_q$.
Multiplying both sides by $p^r$ yields $\|p^r\bar{\rho}\|_q\leq p^r\|z'-z\|_q+p^r\|\bar{\rho}+z-z'\|_q$.
As $\bar{\rho}+z\in B_q(z',\epsilon)$, we have $\|\bar{\rho}+z-z'\|_q\leq\epsilon$, so $\|p^r\bar{\rho}\|_q\leq p^r(\|z'-z\|_q+\epsilon)$.
\end{proof}

\Cref{find-xip}~(b) is used in \Cref{sec-optimization} for efficiently finding a dyadic point inside a rational polyhedron. As for part (a), we will use it below to prove \Cref{density-gen}.

\begin{proof}[Proof of \Cref{density-gen}]
Since $P\subseteq\aff(P)$, $\aff(P)\cap\cL^n=\emptyset$ implies $P\cap\cL^n=\emptyset$.
Assume now that $\aff(P)\cap\cL^n\neq\emptyset$. Pick $z\in\aff(P)\cap\cL^n$.
Since $P$ is a translate of a rational subspace by hypothesis (ii), we can express $\aff(P)$ as $z+\spn\{d^1,\ldots,d^\ell\}$ for some vectors $d^1,\ldots,d^{\ell}\in \Z^n$. 
By hypothesis (i), there exists a $p$-norm ball of $B$ of radius $\epsilon>0$ centred at some $z'\in P$ for which $\aff(P)\cap B\subseteq P$.
By \Cref{find-xip}(a) there exists $\bar{\rho}\in\cL^n$ for which $z+\bar{\rho}\in\aff(P)\cap B$.
Since $z,\bar{\rho}\in\cL^n$ and $\cL$ is closed under addition, $z+\bar{\rho}\in\cL^n$. Putting it altogether, we get that $z+\bar{\rho}\in P\cap \cL^n$, so $P\cap \cL^n\neq \emptyset$, as required.
\end{proof}

Let us present an important corollary of this result. Since every non-empty convex set has a non-empty relative interior, \Cref{density-gen} implies the following.

\begin{CO}\label{density-convex}
Let $\cL$ be a dense subset of $\R$ that is closed under addition and negation. Consider a nonempty convex set $P$, where $\aff(P)$ is a translate of a rational subspace.
Then $P\cap\cL^n\neq\emptyset$ if and only if $\aff(P)\cap\cL^n\neq\emptyset$.
\end{CO}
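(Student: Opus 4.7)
The plan is to derive Corollary \ref{density-convex} as an essentially immediate consequence of Theorem \ref{density-gen}. Comparing the two statements, the corollary drops requirement (i) of the theorem (that the relative interior of $P$ is nonempty) while retaining the rationality hypothesis on $\aff(P)$. So the entire content of the proof will be to argue that, for \emph{nonempty convex} sets, hypothesis (i) is automatic; no further machinery concerning $\cL$ or density is needed beyond what Theorem \ref{density-gen} already supplies.

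The forward direction is trivial: because $P\subseteq\aff(P)$, the assumption $\aff(P)\cap\cL^n=\emptyset$ immediately forces $P\cap\cL^n=\emptyset$, using no property of $P$ other than the inclusion. For the reverse direction, assume $\aff(P)\cap\cL^n\neq\emptyset$. Here I would invoke the standard fact from convex analysis that every nonempty convex set in $\R^n$ has a nonempty relative interior. A one-line justification: set $d:=\dim\aff(P)$, pick $d+1$ affinely independent points in $P$, and observe that their barycenter lies in the relative interior of their convex hull, hence in the relative interior of $P$. With this in hand, $P$ satisfies both hypotheses (i) and (ii) of Theorem \ref{density-gen}, and the theorem applied to $P$ yields $P\cap\cL^n\neq\emptyset$, as required.

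The main obstacle, if one can call it that, is merely the appeal to the nonempty relative interior of a nonempty convex set; this is classical and could either be cited from any standard reference on convex analysis or given the brief barycentric argument above. Beyond that, the corollary is a clean repackaging of Theorem \ref{density-gen} into a form better suited for subsequent applications, where verifying nonemptiness of the relative interior separately would be an unnecessary nuisance.
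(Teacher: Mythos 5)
Your proof is correct and follows exactly the paper's route: the corollary is obtained from \Cref{density-gen} by observing that every nonempty convex set has a nonempty relative interior, so hypothesis (i) of the theorem holds automatically. The brief barycentric justification of that classical fact is a fine (optional) addition; the paper simply cites the fact without proof.
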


When $\cL$ is the set of the dyadic numbers, and $P$ is a polyhedron, we retrieve the fact that a rational polyhedron contains a dyadic point if and only if its affine hull does.

%--------------------------------------------------------------------------------
\subsection{Theorem of the alternatives and consequences}
%--------------------------------------------------------------------------------
In light of \Cref{density-convex} we are interested in characterizing when a rational affine space contains a point in $\cL^n$.
We addressed the case when $\cL$ is the set of dyadic points in \cite{acgt23}.
Density is irrelevant in this case however, and the following general theorem holds; its proof is a careful adaptation of the well-known result for the case of integers (e.g., see Theorem 1.17 in \cite{ConfortiCornuejolsZambelli2014}). For the proof, we need a definition. A square matrix is \emph{unimodular} if it has integral entries and its determinant is $\pm 1$. Observe that if $U$ is unimodular then so is $U^{-1}$, by Cramer's rule for instance.

\begin{theorem}\label{alternative}
Let $\cL$ be a proper subset of $\R$ that is closed under addition and negation.
Consider a matrix $A\in\Z^{m\times n}$ and a vector $b\in \Z^m$. 
Then exactly one of the following holds:
\begin{enumerate}[\;\;(a)]
\item $Ax=b$ has a solution in $\cL^n$,
\item there exists $u\in\R^m$ such that $A^\top u\in\Z^n$ and $b^\top u\notin\cL$.
\end{enumerate}
Moreover, if (a) holds, then $Ax=b$ has a solution in $\cL^n\cap\Q^n$.
\end{theorem}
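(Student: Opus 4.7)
The plan is to adapt the Hermite-normal-form proof of the integer Farkas lemma (Theorem 1.17 in \cite{ConfortiCornuejolsZambelli2014}) from $\Z$ to the abstract $\Z$-module $\cL$. First I would dispatch the easy direction: if $x \in \cL^n$ solves $Ax = b$ and $u$ satisfies $A^\top u \in \Z^n$, then $b^\top u = x^\top(A^\top u)$ is a $\Z$-linear combination of the entries of $x$, and since $\cL$ is closed under addition and negation it forms a $\Z$-module, so $b^\top u \in \cL$. Hence (a) and (b) cannot hold simultaneously.

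For the reverse implication, I would compute a column Hermite normal form of $A$: a unimodular matrix $U \in \Z^{n \times n}$ with $AU = [H \mid 0]$, where $H \in \Z^{m \times r}$ has full column rank and $r := \rank{A}$. Because $U$ and $U^{-1}$ are both integral and $\cL$ is a $\Z$-module, the substitution $y := U^{-1} x$ is a bijection between the $\cL^n$-solutions of $Ax = b$ and the $\cL^n$-solutions of $[H \mid 0] y = b$, equivalently the $\cL^r$-solutions of $Hy^1 = b$ (with $y^2 \in \cL^{n-r}$ arbitrary). Similarly, since $U^\top$ is unimodular and $U^\top A^\top u = \binom{H^\top u}{0}$, we have $A^\top u \in \Z^n$ if and only if $H^\top u \in \Z^r$. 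The problem therefore reduces to the full-column-rank matrix $H$.

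Now I would split on whether $Hy^1 = b$ admits a real solution. If it does not, there exists $u_0 \in \R^m$ with $H^\top u_0 = 0$ and $b^\top u_0 \neq 0$; using the hypothesis $\cL \subsetneq \R$ I would pick $\lambda \in \R \setminus \cL$ and rescale $u := (\lambda / b^\top u_0)\, u_0$, so $H^\top u = 0$ (hence $A^\top u = 0 \in \Z^n$) while $b^\top u = \lambda \notin \cL$, establishing (b). Otherwise $Hy^1 = b$ has the unique rational solution $y^{1\ast} := (H^\top H)^{-1} H^\top b$. If $y^{1\ast} \in \cL^r$ then $x := U\binom{y^{1\ast}}{0}$ lies in $\cL^n \cap \Q^n$ and solves $Ax = b$, giving both (a) and the ``moreover'' clause. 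Otherwise some entry $y^{1\ast}_{i^\ast} \notin \cL$; I would set $u := H(H^\top H)^{-1} e_{i^\ast}$ and verify that $H^\top u = e_{i^\ast} \in \Z^r$ (so $A^\top u \in \Z^n$) while $b^\top u = e_{i^\ast}^\top (H^\top H)^{-1} H^\top b = y^{1\ast}_{i^\ast} \notin \cL$, again establishing (b).

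The main obstacle I anticipate is the bookkeeping: verifying that unimodular changes of variable commute with $\cL^n$-membership, and that the rational ``pseudo-inverse'' vector $u = H(H^\top H)^{-1} e_{i^\ast}$, despite having no intrinsic $\cL$-structure, nevertheless produces an integral $A^\top u$ via the HNF reduction. The properness hypothesis $\cL \subsetneq \R$ enters only in the real-infeasible sub-case, where it is needed to rescale a witness of ordinary linear infeasibility so that its pairing with $b$ falls outside $\cL$.
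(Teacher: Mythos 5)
Your proof is correct and follows essentially the same route as the paper: a unimodular column reduction of $A$ to $(H\;\0)$, mutual exclusivity via integrality of $A^\top u$, a rescaled witness of rational infeasibility using some $\lambda\in\R\setminus\cL$, and otherwise a certificate $u$ obtained from a row of a left inverse of the reduced matrix whose pairing with $b$ is exactly the offending non-$\cL$ coordinate of the unique rational solution. The only cosmetic difference is that the paper first deletes redundant rows to obtain a square nonsingular block $B$ and takes $u=B^{-\top}e_i$, whereas you keep all rows and use the left inverse $(H^\top H)^{-1}H^\top$ of the full-column-rank block; the two constructions produce the same kind of certificate.
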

\begin{proof}
Suppose (a) and (b) both hold. Then we have $\bar{x}\in\cL^n$ with $A\bar{x}=b$ and therefore $u^\top A\bar{x}=u^\top b$.
As $u^\top A$ is integral and since $\cL$ is closed under addition and negation, $\cL\ni u^\top A\bar{x}=u^\top b$, a contradiction.
Assume now that (a) does not hold. We prove that (b) holds.
If $Ax=b$ does not have a solution in $\Q^n$, then there exists $u\in \Q^m$ for which $u^\top A=\0^\top$ and $u^\top b\neq 0$.
Since $\cL$ is a proper subset of $\R$, there exists $p\in\R\setminus\cL$. Then after updating $u:=p\cdot u$, we have $u^\top A=\0^\top\in \Z^n$ and $u^\top b=p\notin \cL$, and (b) holds.
Thus let us assume that $Ax=b$ has a solution in $\Q^n$.
We may further assume that the rows of $A$ are linearly independent for otherwise we can eliminate redundant constraints,
prove (b) for the smaller system and derive the result for the original system.
We can now find a unimodular matrix $U\in \Z^{n\times n}$ for which $AU=(B\;\0)$ where $B$ is a square non-singular matrix (for example one can choose $U$ that converts $A$ into Hermite normal form, see \S\ref{sec:primer-lattices} for more).
We claim that $\bar{z}:=B^{-1}b\notin\cL^m$. 
For otherwise,
\begin{equation}\label{set-barx}
\bar{x}:=U\begin{pmatrix}\bar{z} \\ \0\end{pmatrix}, 
\end{equation}
is a solution to $Ax=b$.
Since $\cL$ is closed under addition and negation and since $U$ is integral, this would imply $\bar{x}\in\cL^n$, a contradiction as (a) does not hold by assumption.
Thus for some $i\in[m]$ we have $\bar{z}_i\notin\cL$.
Let $u=B^{-\top}e_i$. Then 
$u^\top b = e_i^{\top} B^{-1}b=\bar{z}_i\notin\cL$.
Moreover, $u^\top A = e_i^\top B^{-1}(B\;\0)U^{-1}\in\Z^n$ since $U^{-1}$ is integral as $U$ is unimodular.
Hence, (b) holds.

Thus, exactly one of (a) and (b) holds. For the final part of the theorem, suppose that (a) holds.
Then $\bar{z}=B^{-1}b\in\cL^m$, for otherwise the above argument shows that (b) holds, which is not the case. Furthermore, $\bar{z}\in \Q^m$, since $b\in \Z^m$, and $B$ is integral so $B^{-1}$ is rational.
Subsequently, $\bar{x}$ as defined in \eqref{set-barx} is a solution of $Ax=b$ in $\cL^n\cap\Q^n$, as required.
\end{proof}

There are two caveats with \Cref{alternative}. The first one is in the condition that $\cL\neq \R$; the only place in the proof where this condition is necessary is for finding the certificate $u$ in (b) in case (a) does not hold. The second caveat is that if (a) does not hold, then we cannot always find a certificate $u$ in (b) that is rational.
Consider for instance the case where $\cL$ contains all rationals.
Then we require $b^\top u\not\in\cL\supseteq\Q$ in (b), in particular, $u$ is not rational. However, in this example, if (a) does not hold, then $Ax=b$ has no solution in $\Q^n$ either. By eliminating this possibility we can guarantee that the certificate $u$ in (b) is rational.
%Of course if $Ax=b$ has no solution in $\cL^n$ and $\cL\supseteq\Q$ then $Ax=b$ has no solution. This is the only scenario where we do not have a rational certificate for (b). 

\begin{RE}\label{rational-cert}
In \Cref{alternative}, if $Ax=b$ has a solution in $\Q^n$ and has no solution in $\cL^n$, then there exists a rational certificate $u$ in (b).
\end{RE}
\begin{proof}
Indeed, suppose that $Ax=b$ has a solution in $\Q^n$. 
Then, proceeding as in the proof of \Cref{alternative}, we can pick $u=B^{-\top}e_i$ for (b), which is rational.
\end{proof}

Given \Cref{density-convex} and the second part of \Cref{alternative}, we may ask the following question. Given a dense subset $\cL$ of $\R$ closed under addition and negation, and given a rational polyhedron $R$ which contains a point in $\cL^n$, does $R$ necessarily contain a point in $\cL^n\cap\Q^n$?
Alas, this need not be the case. Consider for instance $\cL:=\left\{\frac{a}{b}\sqrt{2}:a,b\in\Z, b\neq 0\right\}$, and let $R\subseteq\R^2$ be the convex hull of $(1,1)$ and $(2,2)$. Then $R\cap\cL^2\neq \emptyset$, but $\cL^2\cap \Q^2=(0,0)$ so $R\cap \cL^2\cap \Q^2=\emptyset$. Note that in this example, $\Q\cap \cL$ is not a dense subset of $\R$, and this is no accident as the next result shows.
\begin{theorem}\label{QinR}
Let $\cL$ be a subset of $\R$ that is closed under addition and negation and suppose that $\cL \cap \Q$ is dense in $\R$.
Then a rational polyhedron contains a point in $\cL^n$ if and only if it contains a point in $\cL^n\cap\Q^n$.
\end{theorem}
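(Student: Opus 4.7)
The backward direction is immediate since $\cL^n \cap \Q^n \subseteq \cL^n$. For the forward direction, let $P \subseteq \R^n$ be a rational polyhedron containing a point in $\cL^n$; the goal is to exhibit a point of $P$ in $\cL^n \cap \Q^n = (\cL \cap \Q)^n$. The key observation that drives the plan is that $\cL \cap \Q$ is itself closed under addition and negation (as both $\cL$ and $\Q$ are) and, by hypothesis, dense in $\R$. Hence \Cref{density-convex} applies with $\cL$ replaced by $\cL \cap \Q$: it suffices to show that $\aff(P)$ contains a point in $(\cL \cap \Q)^n$.

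Write $\aff(P)=\{x\in\R^n:Ax=b\}$ for some $A\in\Z^{m\times n}$ and $b\in\Z^m$ (possible since $P$ is rational). The plan is to apply \Cref{alternative} to this system with the proper subset $\cL \cap \Q$ of $\R$. Suppose for contradiction that $Ax=b$ has no solution in $(\cL\cap\Q)^n$. Since $\aff(P)$ already contains a point of $\cL^n$ by our hypothesis, the system $Ax=b$ certainly has a solution in $\R^n$, and because $A,b$ are integral, also a solution in $\Q^n$. Thus by \Cref{rational-cert}, we may take the certificate $u$ in alternative (b) to be rational: there is $u\in\Q^m$ with $A^\top u\in\Z^n$ and $b^\top u\notin \cL\cap\Q$.

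Here is where the contradiction appears. Let $\bar x\in\cL^n$ satisfy $A\bar x=b$. Then
\[
b^\top u = (A\bar x)^\top u = \bar x^\top (A^\top u),
\]
and since $A^\top u\in\Z^n$ and $\cL$ is closed under addition and negation, the right-hand side lies in $\cL$. So $b^\top u\in\cL$. But also $b^\top u\in\Q$ because $b$ is integral and $u$ is rational; hence $b^\top u\in\cL\cap\Q$, contradicting our choice of $u$. Therefore (a) of \Cref{alternative} must hold: $\aff(P)\cap(\cL\cap\Q)^n\neq\emptyset$, and combining with the initial reduction via \Cref{density-convex}, we conclude $P\cap(\cL\cap\Q)^n\neq\emptyset$.

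The main subtlety I anticipate is the application of \Cref{rational-cert}: one must verify that $Ax=b$ indeed has a rational solution in order to upgrade the certificate $u$ from real to rational, which is what allows $b^\top u$ to belong to $\Q$ and produce the contradiction. This is guaranteed by the integrality of $A,b$ together with the assumption $P\cap\cL^n\neq\emptyset$, but it is the one spot where the assumptions feed together nontrivially.
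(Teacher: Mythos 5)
Your proof is correct and follows essentially the same route as the paper: reduce to the affine hull, invoke the theorem of the alternatives to produce a point of $\aff(P)$ in $\cL^n\cap\Q^n$, and then apply \Cref{density-convex} to the dense group $\cL\cap\Q$ to land back inside $P$. The only difference is minor: the paper applies \Cref{alternative} to $\cL$ itself and reads off the rational solution from its ``Moreover'' clause (which forces a separate treatment of the case $\cL=\R$, since \Cref{alternative} requires a proper subset), whereas you apply it to $\cL\cap\Q$ --- which is automatically a proper subset of $\R$ --- and rule out alternative (b) by contradiction via \Cref{rational-cert}, which spares you the case split at the cost of a slightly longer argument.
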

\begin{proof}
The result is well-known when $\cL=\R$. Otherwise, $\cL\neq \R$.
$(\Leftarrow)$ is clear. For $(\Rightarrow)$, suppose we have a rational polyhedron $P$ with $P\cap\cL^n\neq\emptyset$.
Then $\aff(P)\cap\cL^n\neq\emptyset$, so it follows from \Cref{alternative} that $\aff(P)\cap\cL^n\cap\Q^n\neq \emptyset$. Since $\cL\cap \Q$ is a dense subset of $\R$ that is closed under addition and negation, we may apply \Cref{density-convex} to $\cL\cap \Q$ and $P$ to conclude that $P\cap\cL^n\cap\Q^n\neq \emptyset$, as required.
\end{proof}

%--------------------------------------------------------------------------------
\subsection{Certificate of $\cL$-infeasibility of a rational polyhedron}\label{sec-char-feasibility}
%--------------------------------------------------------------------------------
Consider a dense subset $\cL$ of $\R$ that is closed under addition and negation, and let $P$ be a rational polyhedron. We wish to characterize when $P\cap\cL^n\neq\emptyset$. By \Cref{density-convex}, it suffices to check if $\aff(P)\cap\cL^n\neq\emptyset$.
Furthermore, since $P$ is a rational polyhedron, $\aff(P)$ is a rational affine space, so we can use \Cref{alternative} to 
characterize when $\aff(P)\cap\cL^n\neq\emptyset$. 
In this subsection we combine these results to yield a \emph{certificate $\cL$-infeasibility} of a rational polyhedron. We need the following technical lemma (which is also useful later in \Cref{sec-optimization}).

\begin{LE}\label{dyadic-face}
Let $\cL$ be a dense proper subset of $\R$ that is closed under addition and negation, and 
consider a polyhedron $P := \{x:Ax\leq b\}$ where $A\in\Z^{m\times n}$ and $b\in \Z^m$. Suppose that
\begin{equation}\tag{P}
\max\{c^\top x:Ax\leq b\},
\end{equation}
has an optimal solution of value $\tau$, and let $F:=P\cap\{x:c^\top x=\tau\}$ be the optimal face. Then the following statements are equivalent:
\begin{enumerate}[\;\;(a)]
\item $F\cap\cL^n=\emptyset$.
\item There exists $\bar{y},\bar{u}\in\Q^m$ that satisfy the following conditions,
\begin{enumerate}[(i)]
\item $\supp(\bar{u})\subseteq\supp(\bar{y})$,
\item $\bar{y}$ is an optimal solution to the dual of (P),
\item  $A^\top u\in\Z^n$ and $b^\top u\notin\cL$.
\end{enumerate}
\end{enumerate}
%
%Moreover, we can choose $\bar{y}$ in (b) such that $\supp(\bar{y})=\{i\in[m]:row_i(A)=b_i,\;\mbox{for all}\;x\in F\}$.
% Ahmad: you suggested the above statement, but it is not what is required for the $\cL$-linear program solver
Morever, for $\bar{y}$ satisfying (ii), we have $\supp(\bar{y})\subseteq\{i\in[m]:\row_i(A)x=b_i,\;\mbox{for all}\;x\in F\}$.
\end{LE}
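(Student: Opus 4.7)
My plan is to handle the moreover clause and the direction $(b)\Rightarrow(a)$ together via complementary slackness, then tackle $(a)\Rightarrow(b)$ by combining \Cref{density-convex}, \Cref{alternative}, \Cref{rational-cert}, and LP strict complementarity. Let $\bar y$ be any dual optimal solution. Since $A^\top \bar y = c$, $\bar y \ge 0$, and $b^\top \bar y = \tau$ by strong duality, for every $x^* \in F$ we have $\bar y^\top(b - Ax^*) = b^\top \bar y - (A^\top \bar y)^\top x^* = \tau - c^\top x^* = 0$, forcing $\bar y_i > 0 \Rightarrow \row_i(A) x^* = b_i$. Intersecting over $x^* \in F$ gives $\supp(\bar y) \subseteq J$, where $J := \{i \in [m] : \row_i(A) x = b_i \text{ for all } x \in F\}$; this is the moreover clause. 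Now assume (b) and suppose for contradiction that $\bar x \in F \cap \cL^n$. By (i) and the moreover clause, $\supp(\bar u) \subseteq \supp(\bar y) \subseteq J$, so every $i \in \supp(\bar u)$ satisfies $\row_i(A) \bar x = b_i$, whence $b^\top \bar u = \bar u^\top A \bar x = (A^\top \bar u)^\top \bar x$. The right-hand side is an integer combination of entries of $\bar x \in \cL^n$ and hence lies in $\cL$, contradicting (iii).

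For $(a)\Rightarrow(b)$, the goal is to produce rational $\bar u$ and $\bar y$ with $\supp(\bar u) \subseteq \supp(\bar y) = J$, so that (i) is automatic. To build $\bar u$: since $F$ is a nonempty face of a rational polyhedron, $\aff(F) = \{x : A_J x = b_J\}$ is a rational affine space, and this system has rational solutions. Applying \Cref{density-convex} to the convex set $F$, the hypothesis $F \cap \cL^n = \emptyset$ gives $\aff(F) \cap \cL^n = \emptyset$. \Cref{alternative} together with \Cref{rational-cert} then yields $u_J \in \Q^{|J|}$ with $A_J^\top u_J \in \Z^n$ and $b_J^\top u_J \notin \cL$; padding with zeros on $[m] \setminus J$ produces $\bar u \in \Q^m$ satisfying (iii) and $\supp(\bar u) \subseteq J$.

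Constructing $\bar y$ with $\supp(\bar y) = J$ is the step I expect to be the main obstacle, since in general a dual optimal solution may have strictly smaller support than $J$. My approach is to invoke strict complementarity (the Goldman--Tucker theorem): as the data of (P) are rational and both (P) and its dual attain their optima, there exists a rational optimal pair $(\bar x, \bar y)$ with $\bar y_i + (b - A\bar x)_i > 0$ for every $i \in [m]$. Choosing $\bar x$ in the relative interior of $F$ makes the set of constraints tight at $\bar x$ exactly $J$, and strict complementarity then forces $\supp(\bar y) = J$. Conditions (i)--(iii) now all hold, completing the proof.
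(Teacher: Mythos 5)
Your proof is correct and follows essentially the same route as the paper: the moreover clause and (b)$\Rightarrow$(a) via complementary slackness, and (a)$\Rightarrow$(b) via \Cref{density-convex}, \Cref{alternative} together with \Cref{rational-cert} applied to $A_Jx=b_J$, and a strictly complementary rational optimal pair to force $\supp(\bar{y})=J$. The only cosmetic difference is that for (b)$\Rightarrow$(a) you verify $b^\top\bar{u}=(A^\top\bar{u})^\top\bar{x}\in\cL$ by direct computation rather than citing \Cref{alternative} again, which is the same underlying argument.
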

\begin{proof}
Let $I^= := \{i\in[m]:\row_i(A)x=b_i,\;\mbox{for all $x\in F$}\}$ and denote, by $A^=x\leq b^=$ the inequalities of $Ax\leq b$ corresponding to $I^=$.
Then, (see \cite{ConfortiCornuejolsZambelli2014}, Theorem 3.24), $F=P\cap\{x:A^=x=b^=\}$.
Therefore, (see \cite{ConfortiCornuejolsZambelli2014}, Theorem 3.17), $\aff(F)=\{x:A^=x=b^=\}$.
Consider the dual of (P),
\begin{equation}\tag{D}
\min\{b^\top y:A^\top y=c, y\geq\0\}.
\end{equation}

{\bf (a)$\Rightarrow$(b)}
By \Cref{density-convex}, $\aff(F)$ has no point in $\cL^n$.
Hence, by \Cref{alternative}, there exists $\bar{u}$ satisfying (iii) where $\supp(\bar{u})\subseteq I^=$.
Since $A^=x=b^=$ has a solution, it follows from \Cref{rational-cert} that we can choose $\bar{u}\in\Q^{I^=}$.
Let $\bar{x}$ and $\bar{y}$ be a strictly complementary pair of rational optimal solutions for (P) and~(D).
Then $\supp(\bar{y})=I^=$. It follows that (i) holds.

{\bf (b)$\Rightarrow$(a)}
Let $i\notin I^=$.
Then for some $\bar{x}\in F$ we have $\row_i(A)\bar{x}<b_i$.
By Complementary Slackness, $\bar{y}_i=0$, i.e. $i\notin\supp(\bar{y})$.
Hence, $\supp(\bar{y})\subseteq I^=$, and the ``moreover" statement holds.
Furthermore, by (i) we have $\supp(\bar{u})\subseteq I^=$.
Therefore, by \Cref{alternative} and (iii), there is no point in $\cL^n$ that also lies in $\{x:A^=x=b^=\}=\aff(F)$.
It follows that $F\cap\cL^n=\emptyset$.
\end{proof}
\begin{theorem}\label{infeasibility-certificate}
Let $\cL$ be a dense proper subset of $\R$ that is closed under addition and negation.
Consider a  non-empty polyhedron $P := \{x:Ax\leq b\}$ where $A\in\Z^{m\times n}$ and $b\in \Z^m$.
Then the following are equivalent.
\begin{enumerate}[\;\;(a)]
\item $P\cap\cL^n=\emptyset$.
\item There exists a {\bf certificate of $\cL$-infeasibility} for $P$, that is, a pair of vectors $\bar{y},\bar{u}\in\Q^m$ that satisfy the following conditions:
\begin{enumerate}[(i)]
\item $\supp(\bar{u})\subseteq\supp(\bar{y})$,
\item $\bar{y}\geq\0$, $A^\top\bar{y}=\0$, $b^\top\bar{y}=0$,
\item $A^\top u\in\Z^n$ and $b^\top u\notin\cL$.
\end{enumerate}
\end{enumerate}
%
% Moreover, we can choose $\bar{y}$ in (b) such that $\supp(\bar{y})=\{i\in[m]:\row_i(A)x=b_i,\;\mbox{for all}\;x\in P\}$.
% Ahmad: you suggested the above statement, but it is not what is required for the $\cL$-linear program solver
Moreover, for $\bar{y}$ satisfying (ii), we have $\supp(\bar{y})\subseteq\{i\in[m]:\row_i(A)x=b_i,\;\mbox{for all}\;x\in P\}$.
\end{theorem}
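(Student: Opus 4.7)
The plan is to derive this theorem as an immediate specialization of Lemma~\ref{dyadic-face} with the trivial objective $c = \0$. Under this choice, the LP $\max\{c^\top x : Ax \leq b\}$ has optimal value $\tau = 0$ (since $P$ is non-empty by hypothesis, every feasible point is optimal), so the optimal face is $F = P \cap \{x : 0 = 0\} = P$ itself. Consequently the statement ``$F \cap \cL^n = \emptyset$'' of the lemma becomes ``$P \cap \cL^n = \emptyset$'' in the present setting, and the ``moreover'' clause of the lemma becomes the ``moreover'' clause of the theorem.

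Next I would reconcile the dual-optimality condition (ii) of the lemma with condition (ii) of the theorem. With $c = \0$, the dual (D) becomes $\min\{b^\top y : A^\top y = \0,\; y \geq \0\}$. By strong LP duality, optimal dual solutions exist; any such $\bar{y}$ satisfies $\bar{y} \geq \0$, $A^\top \bar{y} = \0$, and $b^\top \bar{y} = \tau = 0$. Conversely, any $\bar{y}$ meeting these three conditions is dual feasible with objective value $0 = \tau$, hence dual optimal. So condition (ii) of the theorem is exactly the specialization of the lemma's condition (ii) to the case $c = \0$. Conditions (i) and (iii) of the theorem are literally identical to (i) and (iii) of the lemma. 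Invoking Lemma~\ref{dyadic-face} therefore yields both the equivalence (a)$\Leftrightarrow$(b) and the support inclusion claimed after ``Moreover''.

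I do not foresee a serious obstacle: the argument is purely a matter of substitution. The one point that deserves a careful sentence is verifying that dual optimality at $c = \0$ really is captured by the three equalities/inequalities in (ii), which is handled by the strong-duality observation above. Everything else (rationality of $\bar{y}, \bar{u}$, integrality of $A^\top \bar{u}$, $b^\top \bar{u} \notin \cL$, $\supp(\bar{u}) \subseteq \supp(\bar{y})$) is inherited unchanged from the lemma.
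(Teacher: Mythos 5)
Your proposal is correct and matches the paper's own proof essentially verbatim: the paper also sets $c=\0$, notes that every point of the non-empty $P$ is optimal for $\max\{0:Ax\leq b\}$ so the optimal face is $P$ itself and condition (ii) is exactly dual optimality, and then invokes Lemma~\ref{dyadic-face}. No gaps.
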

\begin{proof}
Consider the linear program (P) defined as $\max\{0:Ax\leq b\}$ and let $F=P$. 
Every $\bar{x}\in P$ is an optimal solution to (P) of value $0$.
Therefore, condition (ii) says that $\bar{y}$ is an optimal solution to the dual of (P).
The result now follows from \Cref{dyadic-face}.
\end{proof}
% !TEX root = main.tex
% last revised Sep 6, 2023 by Levent
%%%%%%%%%%%%%%%%%%%%%%%%%%%%%%%%%%%
\section{Algorithms: feasibility and optimization}\label{sec-optimization}
%%%%%%%%%%%%%%%%%%%%%%%%%%%%%%%%%%%
In this section, we present algorithms to check whether a rational polyhedron contains a point in $\cL^n$ and to optimize a linear function over a rational polyhedron restricted to $\cL^n$
where $\cL$ satisfies the following three properties:
\begin{enumerate}[\;\;\;\;\;]
\item (group) \quad\quad\quad $\cL$ is closed under addition and negation, 
\item (density) \quad\quad\; $\cL$ contains all $p$-adic numbers for some prime $p$, and
\item (membership) \; we have a membership oracle for $\cL$.
\end{enumerate}
Assuming the (group) property we also need to require $\cL$ to be a dense subset of the reals for otherwise we are operating within the context of integer programming as discussed in \Cref{sec-subgroup}. 
We want to be able to carry all computations over $\Q$, in particular, we insist that if a rational polyhedron contains a point in $\cL^n$ then it contains one in $\cL^n\cap\Q^n$.
Because of \Cref{QinR} this can be achieved by choosing $\cL$ with the property that $\cL \cap \Q$ is dense.
A natural choice is to include all $p$-adic numbers in $\cL$, i.e. imposing the (density) condition above.
Note, that we will assume that we are also given the prime value $p$ explicitly.
A {\em membership oracle} for $\cL$ is a function that takes as input $x\in\Q$ and returns {\sc yes} if $x\in\cL$ and {\sc no} otherwise. 
We will present algorithms that run in oracle polynomial time for both feasibility and optimization over polyhedra restricted to $\cL^n$.
When $\cL$ is the set of $p$-adic, or $[p]$-adic numbers, our three properties (group), (density), and (membership) hold, and we have trivial polynomial oracles.
Thus, we will be able to solve the feasibility and optimization problems for $p$-adic and $[p]$-adic numbers in polynomial time.
In particular, we can solve the problem for dyadic numbers.

Our algorithms will rely on the existence of a number of subroutines that we present next. 
  
\medskip\noindent{\bf Algorithm A.}\\
Takes as input a matrix $A\in \Q^{m\times n}$ and a vector $b\in\Q^m$.
Returns one of the following:
(i) $\bar{x}\in\Q^n$ satisfying $Ax=b$, or
(ii) $u\in\Q^m$ such that $A^\top u=\0$  and $b^\top u\neq 0$ certifying $Ax=b$ has no solution.

\medskip\noindent{\bf Algorithm B.}\\
Takes as input a matrix $A\in\Z^{m\times n}$ and a vector $b\in\Z^m$.
We are also given a membership oracle that describes a set $\cL\subseteq\R$ that is closed under addition and negation.
Returns one of the following:
(i) $\bar{x}\in\cL^n\cap\Q^n$ satisfying $Ax=b$, or 
(ii)  $u\in\Q^m$ such that $A^\top u=\0$  and $b^\top u\neq 0$ certifying $Ax=b$ has no solution, or
(iii) $u\in\Q^m$ such that $A^\top u$ is integral and $u^\top b\notin\cL$ certifying $Ax=b$ has no solution in $\cL^n$.

\medskip\noindent{\bf Algorithm C.}\\
Takes as input a matrix $A\in \Z^{m\times n}$.
Returns linearly independent vectors $d^1,\ldots,d^\ell\in\Z^n$ with the property that $\spn\{d^1,\ldots,d^\ell\}=\{x:Ax=\0\}.$

 \medskip\noindent{\bf Algorithm D.}\\
Takes as input a matrix $A\in \Q^{m\times n}$ and vectors $b\in\Q^m$ and $c\in\Q^n$.
Then solves the linear program
\begin{equation}\tag{P}
\max\{c^\top x:Ax\leq b\}.
\end{equation}
\noindent
Namely, it returns one of the following:
(i) $u\in\Q^m$ such that $A^\top u\geq\0$ and $b^\top u<0$ certifying that (P) is infeasible, or
(ii) $x,r\in\Q^n$ such that $Ax\leq b$, $Ar\leq\0$ and $c^\top r>0$ certifying that (P) is unbounded, or
(iii)  $x\in\Q^n$, $y\in\Q^m$ which form a pair of strictly complementary optimal solutions for (P) and its dual.
\medskip

Algorithm B emulates the argument in the proof of \Cref{alternative}. 
We first ensure that $Ax=b$ has a solution using Algorithm A, and then eliminate redundant constraints.
We then find a unimodular matrix $U\in\Z^{n\times n}$ for which $AU=(B\;\0)$ where $B$ is an $m$-by-$m$ matrix and compute $\bar{z}=B^{-1}b$.
We use the membership oracle to check whether each of $\bar{z}_i\in\cL$.
If this is the case then $U\begin{pmatrix}\bar{z} \\ \0\end{pmatrix}\in\cL^n$ is a solution to $Ax=b$.
Otherwise, for some $i\in[m]$, $\bar{z}_i\notin\cL$ and we return $u=B^{-\top}e_i$.
For Algorithm C, we eliminate linearly dependent rows of $A$ and find a unimodular matrix $U\in\Z^{n\times n}$ for which $AU=(B\;\0)$ where $B$ is an $m$-by-$m$ matrix.
Then the columns of $U$ corresponding to the $\0$ columns of $(B\;\0)$ are the required vectors $d^i$.
For both algorithms, finding the matrix $U$ can achieved by rewriting $A$ in Hermite normal form which can be found in polynomial time~\cite{Kannan79}.
This implies that there is an implementation of Algorithm~C that runs in polynomial time, and an implementation of Algorithm~B that runs in {\em oracle} polynomial time.
%--------------------------------------------------------------------------------
\subsection{$\cL$-Feasibility Problem (LFP)}\label{sec:real-feasibility}
%--------------------------------------------------------------------------------
Consider a polyhedron $P := \{x:Ax\leq b\}$. 
The {\em $\cL$-Feasibility Problem (LFP)} takes as input a matrix $A\in\Z^{m\times n}$ and a vector $b\in\Z^m$ that define the polyhedron~$P$.
We consider $\cL\subseteq\R$ satisfying the (group), (density) and (membership) properties (with $p$ given explicitly).
We then need to return 
(i) a point in $P\cap\cL^n$, or (ii) a certificate that $P=\emptyset$, or (iii) a certificate of $\cL$-infeasibility (as defined in \Cref{sec-char-feasibility}). We will show that there exists an oracle polynomial time algorithm to solve LFP. Note that by Farkas' lemma, $P=\emptyset$ if and only if there exists $y\geq\0$ for which $A^\top y=\0$ and $b^\top y<0$; such a $y$ is a {\em certificate of real-infeasibility}, and is the output of (ii).

%%%%%%%%%%
\subsubsection{The LFP algorithm}\label{sec-lfp}
%%%%%%%%%%
Here is a description of our algorithm to solve LFP.

\medskip\noindent{\em {\bf Step 1}: Find the implicit equalities of $P$.}
\medskip

\noindent
Consider the following primal-dual pair,
\begin{align*}
& \max\{0: Ax\leq b\}, \tag{P} \\
& \min\{b^\top y:A^\top y=\0, y\geq\0\}. \tag{D}
\end{align*}
Use Algorithm D to check whether $P=\emptyset$.
If it is, return a certificate of real-infeasibility $u$ and stop.
Otherwise, Algorithm D finds a strictly complementary pair of optimal solutions $\bar{x}\in\Q^n$ and $\bar{y}\in\Q^m$ for (P) and (D) respectively.
Since every $x\in P$ is an optimal solution to (P), strict complementarity implies that
\begin{claim*}
$\supp(\bar{y})=I^=$ where $I^==\{i\in[m]:\row_i(A)x=b_i\;\mbox{$\forall$ $x\in P$}\}$.
\end{claim*}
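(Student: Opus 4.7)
The plan is to extract the claim from two standard ingredients: the strict complementarity of the pair $(\bar x,\bar y)$, and the fact that since (P) has objective identically zero, \emph{every} point of $P$ is primal optimal and hence can be paired with $\bar y$ through complementary slackness.

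First, I would observe that strict complementarity applied to $(\bar x,\bar y)$ itself says that for every $i\in[m]$, exactly one of $\bar y_i>0$ or $\row_i(A)\bar x<b_i$ holds. This is precisely the statement
\[
\supp(\bar y)=\{i\in[m]:\row_i(A)\bar x=b_i\}.
\]
The containment $I^=\subseteq\supp(\bar y)$ is then immediate: if $i\in I^=$, then $\row_i(A)\bar x=b_i$ because $\bar x\in P$, and so $i$ lies in the set on the right.

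For the reverse inclusion $\supp(\bar y)\subseteq I^=$, I would argue by contradiction. Suppose some $i\in\supp(\bar y)$ is \emph{not} in $I^=$. Then there exists a point $x^\star\in P$ with $\row_i(A)x^\star<b_i$. Because (P) has zero objective, $x^\star$ is also optimal for (P), and so the pair $(x^\star,\bar y)$ satisfies complementary slackness: $\bar y_j(b_j-\row_j(A)x^\star)=0$ for every $j\in[m]$. Taking $j=i$ forces $\bar y_i=0$, contradicting $i\in\supp(\bar y)$. Combining the two inclusions gives the claim.

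The argument has no real obstacle; the only conceptual point worth flagging is that one must not conflate the two uses of the dual certificate. Strict complementarity is used only once, to pin down $\supp(\bar y)$ in terms of the tight rows at $\bar x$, whereas the equality with $I^=$ requires the weaker complementary slackness condition applied with an auxiliary point $x^\star\in P$ that witnesses strict feasibility of row $i$. Since the primal objective is zero, such a witness is automatically optimal, which is what makes the whole argument go through.
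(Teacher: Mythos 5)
Your proposal is correct and follows essentially the same route the paper intends: the paper compresses the argument into the single remark that every point of $P$ is optimal for (P) and strict complementarity then yields the claim, and your write-up simply makes explicit the two ingredients this invokes (strict complementarity of the pair $(\bar{x},\bar{y})$ for the inclusion $I^=\subseteq\supp(\bar{y})$, and ordinary complementary slackness against an auxiliary optimal point $x^\star\in P$ for the reverse inclusion). No gaps.
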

\noindent
Denote by $A^=x\leq b^=$ the constraints from $Ax\leq b$ indexed by $I^=$. Then $\aff(P)=\{x:A^=x=b^=\}$.

\medskip\noindent{\em {\bf Step 2}: Find an internal description of $\aff(P)$ or certify $\cL$-infeasibility.}
\medskip

\noindent
Let $I^<=[m]\setminus I^=$ and let $A^<x\leq b^<$ denote the constraints of $Ax\leq b$ indexed by $I^<$.
Then use Algorithm B to either,
\begin{enumerate}[\;\;(i)]
\item find $z\in\aff(P)\cap\cL^n\cap\Q^n$, or
\item find $u^=\in\Q^m$ such that ${A^=}^\top u^=$ is integral and $b^\top u^=\notin\cL$.
\end{enumerate}
If (ii) occurs, then extend $u^=$ to a vector indexing the rows of $A$ by assigning zeros to all entries corresponding to $I^<$, 
and denote the resulting vector by $\bar{u}$.
Then observe that $\bar{y},\bar{u}$ is a certificate of $\cL$-infeasibility, and we can stop.
Otherwise we have $z$ as described in (i).
Use Algorithm C to find an integral basis $d^1,\ldots,d^\ell$ of $\{d:A^=d=\0\}$.
It then follows that
\begin{equation}\label{aff-description}
\aff(P)=z+\spn\{d^1,\ldots,d^\ell\}.
\end{equation}

\noindent{\em {\bf Step 3}: Find a Euclidean ball contained in $P$.}
\medskip

\noindent
For each $i\in[m]$ let $\gamma_i:=\lceil\|\row_i(A)\|_2\rceil$ ($\gamma_i$ can be
computed in polynomial time on a Turing machine, without evaluating the
square-root to a high accuracy).
Consider the following linear program with variables $\zeta$ and $e$:
\begin{align}
\max\;\;\;\;\;\;\; & \;\;\;\;e  \notag\\
\mbox{subject to} \notag \\
& A^=\zeta=b^= \notag \\
& \gamma_i e+\row_i(A)\zeta\leq b_i & (i\in I^<) & \label{ball} \\
& e\leq 1. \notag
\end{align}
Since $e\leq 1$, \eqref{ball} is not unbounded. Moreover, \eqref{ball} is feasible (pick $\zeta\in P$ and $e=0$).
Hence, we can use Algorithm D to find an optimal solution $\zeta=z'\in\Q^n$, $e=\epsilon\in\Q$ of \eqref{ball}.
By definition of $I^<$ we have $\epsilon>0$.
Then $z'\in P$ and for every $i\in[m]$, $\frac{1}{\gamma_i}\left[b_i-\row_i(A)z'\right]$ is a lower bound on the Euclidean distance from $z'$ to the hyperplane $\{h:\row_i(A)h=b_i\}$ because $\gamma_i\geq\|\row_i(A)\|_2$. 
This implies that $B\subseteq\{x:A^<x\leq b^<\}$ where $B$ denotes the Euclidean ball of radius $\epsilon$ centred at $z'$.
Then $B\cap\aff(P)\subseteq\{x:A^<x\leq b^<\}\cap\{x:A^=x=b^=\}=P$.

\medskip\noindent{\em {\bf Step 4}: Find a point in $P\cap\cL^n$.}
\medskip

\noindent
Pick 
\begin{equation}\label{choose-r-dlp}
r := \left\lceil\log_p\left(\frac{\ell\max\{\|d^1\|_2,\ldots,\|d^\ell\|_2\}}{\epsilon}\right)\right\rceil.
\end{equation}
Then we satisfy \eqref{choice-r} (this is the place where we need to know the value of $p$ explicitly).
Use Algorithm~A to find $\alpha$ satisfying $\sum_{i\in[\ell]}\alpha_i d^i=z'-z$ so that \eqref{choice-alpha} holds.
Choose $\bar{\rho}$ as in \eqref{choice-rho}.
It then follows from \Cref{find-xip} that $\bar{\rho}$ is $p$-adic and that $z+\bar{\rho}\in \aff(P)\cap B\subseteq P$.
Finally, as $z\in\cL^n\cap\Q^n$, the (group) and (density) properties imply that $z+\bar{\rho}\in\cL^n\cap\Q^n$, as required.

%%%%%%%%%%
\subsubsection{The alternate LFP algorithm}\label{sec-lfp-alt}
%%%%%%%%%%
We sketch out a variant of our algorithm for solving the $\cL$-feasibility problem for polyhedra that are in standard equality form, i.e. for $P:=\{x\geq\0:Ax=b\}$.
The implicit equalities will be of the form $Ax=b$ as well as $x_j=0$ for a subset $J^=\subseteq[n]$ of the column indices.
In particular, we have 
\[
\aff(P)=\{x:Ax=b, x_j=0 \;\forall j\in J^=\}.
\]
\noindent
We consider the following primal-dual pair,
\begin{align*}
& \max\{0: Ax=b,x\geq\0\}, \tag{P'} \\
& \min\{b^\top y:A^\top y\geq \0\}. \tag{D'}
\end{align*}
In {\em Step 1}, we proceed as in the previous version and use Algorithm D to find a strictly complementary pair of optimal solutions $x'$ and $y'$ for (P') and (D').
Then $J^==\{j\in[n]: \mbox{col}_j(A)^\top y'>0\}$ and this allows us to identify $\aff(P)$.
In {\em Step 2}, we use Algorithm C find an integral basis $\{d^1,\ldots,d^\ell\}$ of $\{d:Ad=\0, d_j=0\;\forall j\in J^=\}$.
The most notable change is for {\em Step 3}. 
Let $J^<=[n]\setminus J^=$ and denote by $D$ the column submatrix of $A$ indexed by columns $J^<$.
Instead of \eqref{ball} we solve, the linear program with variables $e,\zeta$ where
\begin{align}
\max\;\;\;\;\;\;\; & \;\;\;\;e  \notag \\
\mbox{subject to} \notag \\
& D\zeta \, = \, b \notag\\
& e \leq \zeta_j & (j\in J^<) \label{ball-alternate}\\
& e\leq 1. \notag
\end{align}
The above linear program is clearly feasible and bounded, hence we can use Algorithm D to find an optimal solution $\zeta=\bar{z}$ and $e=\epsilon$.
Let $z'\in\Q^n$ be obtained from $\bar{z}$ by setting entries corresponding to $J^=$ to zero.
Let $B=\{z:\|z'-z\|_\infty\leq\epsilon\}$, i.e. $B$ is the $\infty$-norm ball of radius $\epsilon$ centred at $z'$.
Then we have $B\subseteq \{x: x_j\geq\0 \;\forall j\in J^<\}$ by construction.
It follows that $B\cap\aff(P)\subseteq\{x: x_j\geq 0 \;\forall j\in J^<\}\cap\{x:Ax=b, x_j=0 \;\forall j\in J^=\}=P$.
{\em Step 4.} is the same as for the original algorithm, except that while the original algorithm used the $2$-norm ball $B$ in \Cref{find-xip}, 
this version of the algorithm uses the $\infty$-norm ball. In particular, we need to choose $r$ according to,
\begin{equation}\label{choose-r-dlp-inf}
r := \left\lceil\log_p\left(\frac{\ell\max\{\|d^1\|_\infty,\ldots,\|d^\ell\|_\infty\}}{\epsilon}\right)\right\rceil.
\end{equation}
rather than according to \eqref{choose-r-dlp}.
%%%%%%%%%%
\subsubsection{Output size and running time analysis}
%%%%%%%%%%
We refer to the algorithm described in \Cref{sec-lfp} as \emph{the LFP algorithm}, and the algorithm described in \Cref{sec-lfp-alt} as the {\em alternate} LFP algorithm.
Informally, the runtime of our LFP algorithm is dominated by the runtime of our linear program solver, 
which we call twice, once for checking feasibility of $Ax\leq b$ and once for solving \eqref{ball}.
Note, that both linear programs have essentially the same size (with respect to many measures), moreover, they are both using the original data $A,b$.
Thus, checking for $\cL$-feasibility is about at most twice as time consuming as checking for real-feasibility. 
The same conclusion applies to the alternate LFP algorithm.

We will analyze the running time of the LFP algorithm and the output size of the alternate LFP algorithm (the analysis is somewhat cleaner for that version because of the use of $\infty$-norms).
The {\em encoding size} of an integer $\alpha$ is defined as $\size{\alpha}:=\log_2(\alpha)$. The encoding size of a rational $\frac{r}{s}$ is defined as $\size{\frac{r}{s}}:=\size{r}+\size{s}$.
Observe that for rationals $\alpha,\beta$ we have $\size{\alpha}=\size{\frac1{\alpha}}$ and that $\size{\alpha\beta}\in\bO\bigl(\size{\alpha}+\size{\beta}\bigr)$.
We consider $p$ which appears in the (density) property of the set $\cL$ to be an absolute constant.
For each of Algorithms A-D we can view the input as an $m'$-by-$n'$ matrix where the largest encoding size of any entry is given by~$\sigma$.
Then the runtime is bounded by a function $f_a$, $f_b$, $f_c$, $f_d$ for each of algorithms A, B, C, D where $f_a$, $f_b$, $f_c$, $f_d$ are functions of $m',n',\sigma$.
In addition, for the output, the largest entry size is bounded by a function $g_b$, $g_c$, $g_d$ for each of algorithms B, C, D where $g_b$, $g_c$, $g_d$ are also functions of $m', n', \sigma$.
There exist implementations for each algorithm where each of $f_a, f_b, f_c, f_d, g_b, g_c, g_d$ is a polynomial in $m',n',\sigma$.
Note, for Algorithm B the runtime is in terms of the number of calls to the $\cL$-oracle.

Let us now analyze the runtime.
\begin{theorem}
Consider $A\in\Z^{m\times n}$ and a vector $b\in \Z^m$ and let $\sigma$ denote the largest size of any entry in $A$ or $b$.
Then the LFP algorithm has runtime
\[
\bO\bigl( 
f_a(n,n+1,\sigma')+
f_b(m,n+1,\sigma)+
f_c(m,n,\sigma)+
f_d(m+n+1,n+2,\sigma)
\bigr)
\]
where
\[
\sigma'=g_b(m,n+1,\sigma)+g_c(m,n,\sigma)+g_d(m+n+1,n+2,\sigma).
\]
\end{theorem}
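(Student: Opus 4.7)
The plan is to walk through the four steps of the LFP algorithm, identify every subroutine call, and verify that each call's contribution to the total runtime fits within one of the four terms in the claimed bound; the input entry sizes for the final call to Algorithm A will need to be tracked carefully since they depend on the outputs of earlier calls.

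First, I would handle Step 1 and Step 3 together, since both invoke Algorithm D. The primal--dual pair (P),(D) in Step 1 has $m$ constraints and $n$ variables with entries from $A,b$, while the ball LP in Step 3 has $n+1$ variables $(\zeta,e)$ and roughly $m+1$ constraints built from the equalities $A^=\zeta=b^=$, the inequalities indexed by $I^<$, and $e\leq1$. The $\gamma_i$ coefficients have size $\bO(\sigma)$ by construction, so all entries involved in both LPs have size $\bO(\sigma)$. I would then check that both input sizes fit within the envelope $(m+n+1,n+2)$ using whatever encoding convention the paper adopts for Algorithm D (accounting for unrestricted variables and mixed equality/inequality constraints); both calls together therefore contribute $\bO(f_d(m+n+1,n+2,\sigma))$. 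Crucially, the output vectors $\bar{y},\bar{x},z',\epsilon$ produced here have entry size at most $g_d(m+n+1,n+2,\sigma)$.

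Next, for Step 2, I would observe that the subsystem $A^=x=b^=$ is a submatrix of $(A\mid b)$ with $|I^=|\leq m$ rows and $n+1$ columns, each entry of size $\sigma$. The call to Algorithm B thus takes time $f_b(m,n+1,\sigma)$ and produces the rational point $z\in\cL^n\cap\Q^n$ with entries of size at most $g_b(m,n+1,\sigma)$. The call to Algorithm C, applied to $A^=$ viewed as an $m\times n$ integer matrix with entries of size $\sigma$, takes time $f_c(m,n,\sigma)$ and returns an integral basis $d^1,\ldots,d^\ell$ with entries of size at most $g_c(m,n,\sigma)$.

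Finally, for Step 4, I would note that Algorithm~A is called on the linear system whose matrix is $(d^1\mid\cdots\mid d^\ell)\in\Z^{n\times\ell}$ and whose RHS is $z'-z\in\Q^n$. Since $\ell\leq n$, the augmented input fits within an $n\times(n+1)$ matrix. The entries of $d^i$ have size at most $g_c(m,n,\sigma)$, while the entries of $z'-z$ have size $\bO\bigl(g_b(m,n+1,\sigma)+g_d(m+n+1,n+2,\sigma)\bigr)$ since size is subadditive under addition. The maximum entry size is therefore $\bO(\sigma')$, so this call contributes $f_a(n,n+1,\sigma')$. The remaining arithmetic of Step 4---choosing $r$ via \eqref{choose-r-dlp} using the explicitly given constant $p$, computing the floors $\lfloor p^r\alpha_i\rfloor$, and assembling $\bar{\rho}$---runs in time polynomial in the encoding sizes already produced, hence does not disturb the asymptotic bound. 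Summing the four contributions yields the theorem.

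The main obstacle is the dimensional bookkeeping for the two calls to Algorithm~D: one must justify that both the original LP (P) and the constructed ball LP \eqref{ball}---whose structure depends on $I^=$ discovered midway through the algorithm and whose variables $(\zeta,e)$ are unrestricted---can be packaged so that Algorithm D is invoked on inputs of dimensions at most $(m+n+1)\times(n+2)$. Once that encoding is fixed, the entry-size propagation through Steps 1--3 into the input of Algorithm A is the other delicate point, but it reduces to the subadditivity of $\size{\cdot}$ under addition and to the definition of $\sigma'$.
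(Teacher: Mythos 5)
Your proposal is correct and follows essentially the same route as the paper: step through the four phases of the LFP algorithm, bound each subroutine call by the corresponding $f$-term (folding both Algorithm D calls into the larger $f_d(m+n+1,n+2,\sigma)$ term), and track the output sizes $g_b,g_c,g_d$ into the entry size $\sigma'$ of the final Algorithm A call. The dimensional bookkeeping and the size-propagation argument you flag as the delicate points are handled at the same level of detail in the paper's own proof.
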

\begin{proof}
The runtime of the algorithm is longest when it finds a point in $P\cap\cL^n$ so we assume that this is the case in the analysis.
{\em Step~1.}
The runtime is dominated by Algorithm~D with $A,b$. 
It is in $\bO\bigl(f_d(m,n+1,\sigma)\bigr)$ since the data $A,b$ can be represented as an $m$-by-$(n+1)$ matrix.
The algorithm returns $\bar{x}$ and $\bar{y}$ which are used to find the implicit equalities $A^=x\leq b^=$ of $Ax\leq b$.
{\em Step~2.}
We run Algorithm~B for the system $A^=x=b^=$. 
This takes $\bO\bigl(f_b(m,n+1,\sigma)\bigr)$ and we obtain $z\in\cL^n\cap\Q^n$ with largest size of an entry in $\bO\bigl(g_b(m,n+1,\sigma)\bigr)$. 
We then use Algorithm~C to get an integral basis $\{d^1,\ldots,d^\ell \}$ of $\{d:A^=d=\0\}$. Note that $\ell\leq n$.
This takes $\bO\bigl(f_c(m,n,\sigma)\bigr)$ and for each $i\in[\ell]$ the largest size of an entry in $d^i$ is $\bO\bigl(g_c(m,n,\sigma)\bigr)$.
{\em Step~3.}
The linear program \eqref{ball} has $n+1$ variables and at most $m+n+1$ constraints.
We run Algorithm~D to solve it.
This takes $\bO\bigl(f_d(m+n+1,n+2,\sigma)\bigr)$ and we return $z'$ and $\epsilon$ with largest size of an entry in $\bO\bigl(g_d(m+n+1,n+2,\sigma)\bigr)$.
{\em Step~4.}
We solve the system $\sum_{i\in[\ell]}\alpha_i d^i=z'-z$ using Algorithm~A. 
Note, that $z$, $d^i$, $z'$ are outputs of Algorithms B, C and D respectively.
As $\ell\leq n$, this takes $\bO\bigl(f_a(n,n+1,\sigma')\bigr)$ for $\sigma'$ defined as in the statement.
\end{proof}

We close this discussion by showing that functions $g_b, g_c, g_d$ applied to the original data determine the size of the solution returned by the alternate LFP algorithm.
\begin{theorem}
Consider $A\in\Z^{m\times n}$ and a vector $b\in \Z^m$ and let $\sigma$ denote the largest size of any of an entry in $A$ or $b$.
Assume that $P:=\{x\geq\0:Ax=b\}$ contains a point in $\cL^n$. 
Then the alternate LFP algorithm finds a point $\bar{x}\in P\cap\cL^n\cap\Q^n$ with 
\[
\size{\|\bar{x}\|_\infty}\in\bO\bigl(g_b(m+n,n+1,\sigma)+g_c(m+n,n,\sigma)+g_d(m+n+1,n+1,\sigma) + \log(n)\bigr). 
\]
\end{theorem}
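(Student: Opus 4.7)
The plan is to trace through Steps~1--4 of the alternate LFP algorithm, track the encoding sizes of the rational intermediate quantities in terms of $g_b$, $g_c$, $g_d$, and combine these estimates using \Cref{find-xip}(b) to bound $\size{\|\bar{x}\|_\infty}$.

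First I would observe that after Step~1 the implicit equalities of $P$ are captured by a set $J^=\subseteq[n]$, so that $\aff(P)=\{x:Ax=b,\; x_j=0\ \forall j\in J^=\}$ is defined by at most $m+n$ integer equations in $n$ variables, each entry of size at most $\sigma$. Step~2 feeds this system into Algorithm~B to obtain $z\in \cL^n\cap \Q^n$ with $\size{z_j}\leq g_b(m+n,n+1,\sigma)$ for every $j$, and into Algorithm~C to obtain an integer basis $d^1,\ldots,d^\ell$ (with $\ell\le n$) satisfying $\size{\|d^i\|_\infty}\leq g_c(m+n,n,\sigma)$. In Step~3, the linear program \eqref{ball-alternate} has at most $m+n+1$ constraints in at most $n+1$ variables, so a call to Algorithm~D yields $z'\in\Q^n$ and $\epsilon\in\Q_{>0}$ with $\size{\|z'\|_\infty},\size{\epsilon}\leq g_d(m+n+1,n+1,\sigma)$.

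Next I would analyze Step~4, where $r$ is chosen via \eqref{choose-r-dlp-inf} and $\bar{x}=z+\bar{\rho}$ is returned with $\bar{\rho}=\sum_{i=1}^{\ell}\frac{\lfloor p^r\alpha_i\rfloor}{p^r}d^i$ (here $p$ is treated as a constant). The defining inequality for $r$ combined with the above bounds gives $r\log_2 p=O(\log n + g_c(m+n,n,\sigma)+g_d(m+n+1,n+1,\sigma))$. Applying \Cref{find-xip}(b) with $q=\infty$ yields $\|p^r\bar{\rho}\|_\infty\leq p^r(\|z'-z\|_\infty+\epsilon)$; since $p^r\bar{\rho}$ is integral, each $\bar{\rho}_j$ is a rational with denominator dividing $p^r$ and numerator bounded in absolute value by $p^r(\|z'-z\|_\infty+\epsilon)$. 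Writing $\bar{x}_j$ over the common denominator $b_j\cdot p^r$ (where $b_j$ is the denominator of $z_j$), its integer numerator is bounded in absolute value by $b_jp^r(2\|z\|_\infty+\|z'\|_\infty+\epsilon)$. Using $\size{r/s}\le \log_2|r|+\log_2|s|$ for any (not necessarily reduced) representation, the entry $\bar{x}_{j^*}$ achieving $\|\bar{x}\|_\infty$ satisfies
\[
\size{\|\bar{x}\|_\infty}\ \leq\ 2\bigl(\log_2 b_{j^*}+r\log_2 p\bigr)+\log_2\!\bigl(2\|z\|_\infty+\|z'\|_\infty+\epsilon\bigr),
\]
and substituting $\log_2 b_{j^*}\leq g_b(m+n,n+1,\sigma)$, the bound on $r\log_2 p$, and $\log_2\max(\|z\|_\infty,\|z'\|_\infty,\epsilon)=O(g_b+g_d)$ gives the claimed estimate.

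The main obstacle is to keep a size function for Algorithm~A out of the final bound: a naive approach would first bound $\alpha$ via Algorithm~A and then bound $\bar{\rho}$ from that, introducing an unwanted ``$g_a$'' dependence. The trick is to bypass $\alpha$ entirely by invoking the estimate $\|p^r\bar{\rho}\|_\infty\leq p^r(\|z'-z\|_\infty+\epsilon)$ from \Cref{find-xip}(b), which controls $\bar{\rho}$ purely in terms of $z$, $z'$, $\epsilon$ and the scalar $p^r$, each of which is already bounded by $g_b$, $g_c$, $g_d$ and $\log n$.
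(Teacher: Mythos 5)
Your proposal is correct and follows essentially the same route as the paper's proof: both trace Steps~1--4, use the fact that $\aff(P)$ is defined by at most $m+n$ constraints to bound the sizes of $z$, the $d^i$, $z'$ and $\epsilon$ via $g_b$, $g_c$, $g_d$, bound $\size{p^r}$ from \eqref{choose-r-dlp-inf}, and then invoke the ``furthermore'' estimate of \Cref{find-xip}(b) with the $\infty$-norm to control $\bar{\rho}$ directly from $z$, $z'$, $\epsilon$ and $p^r$, exactly avoiding any dependence on a size function for Algorithm~A. The final bookkeeping with the common denominator is a slightly more explicit version of the paper's observation that $p^r\bar{\rho}$ is integral, but the argument is the same.
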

\begin{proof}
In Step 1, we find $J^=$ with $\aff(P)=\{x:Ax=b,x_j=0\;\forall j\in J^=\}$.
Thus $\aff(P)$ is described by a system with at most $m+n$ constraints.
In Step~2, we find $z\in\aff(P)$ using Algorithm~B. Hence,
\begin{equation}\label{size-z}
\size{\|z\|_\infty}\in\bO\bigl(g_b(m+n,n+1,\sigma)\bigr).
\end{equation}
We then use Algorithm~C to find an integer basis $d^1,\ldots,d^\ell$ of $\{d:Ad=\0, d_j=0\;\forall j\in J^=\}$. Thus,
\begin{equation}\label{size-d}
\size{\|d^i\|_\infty}\in\bO\bigl(g_c(m+n,n,\sigma)\bigr).
\end{equation}
In Step~3, we solve \eqref{ball-alternate} using Algorithm D. It follows that,
\begin{equation}\label{size-e-z'}
\size{\epsilon},\size{\|z'\|_\infty}\in\bO\bigl(g_d(m+n+1,n+1,\sigma)\bigr).
\end{equation}
In Step~4 we pick $r$ as in \eqref{choose-r-dlp-inf}. Note, that $\ell\leq n$. Therefore,
\[
\size{p^r}\in\bO\bigl(\size{\|d\|_\infty}+\size{\epsilon}+\size{n}\bigr).
\]
By \eqref{size-d} and \eqref{size-e-z'} in follows in turn that,
\begin{equation}\label{size-qr}
\size{p^r}\in\bO\bigl(g_c(m+n,n,\sigma)+g_d(m+n+1,n+1,\sigma) + \log(n) \bigr).
\end{equation}
In Step~4 we construct $\bar{\rho}$ as described in \eqref{choice-rho}.
Since $p=\infty$, \Cref{find-xip}(b) implies that for every $i\in[n]$
\begin{equation}\label{bound-qrrho}
|p^r\bar{\rho}_i| \leq p^r(|z'_i-z_i|+1).
\end{equation}
Since $p^r\bar{\rho}_i\in\Z$ it follows from \eqref{bound-qrrho} that 
$\size{p^r\bar{\rho}_i}\in\bO\bigl(\size{p^r}+\size{\|z\|_\infty}+\size{\|z'\|_\infty}\bigr)$. 
This in turn implies,
\[
\size{\|\bar{x}=z+\bar{\rho}\|_\infty}\in\bO\bigl(\size{p^r}+\size{\|z\|_\infty}+\size{\|z'\|_\infty}\bigr).
\]
The result now follows from \eqref{size-z}, \eqref{size-e-z'} and \eqref{size-qr}.
\end{proof}
%
%--------------------------------------------------------------------------------
\subsection{$\cL$-Linear Programming}
%--------------------------------------------------------------------------------
Suppose that $\cL$ is dense and closed under addition and negation, and consider the following optimization problem,
\begin{equation}\label{dlp}
\sup\left\{c^\top x:Ax\leq b,\;x\in\cL^n\right\},
\end{equation}
where $A\in\Z^{m\times n}$, $b\in\Z^m$ and $c\in\Z^n$.
We say that \eqref{dlp} is an {\em $\cL$-linear program}.
Trivially, the status of this optimization problem has to fall within exactly one of the following outcomes:
\begin{enumerate}[\;\;(o1)]
\item it is infeasible,
\item it is unbounded,
\item it has an optimal solution,
\item it  is feasible, bounded, but has no optimal solution.
\end{enumerate}
Note that in contrast to linear programming, (o4) can occur, such as for $
\sup\{x : 3x \leq 1,\;\mbox{$x$ dyadic}\}$.
%%%%%%%%%%
\subsubsection{Certificates}
%%%%%%%%%%
Here we certify each of the outcomes (o1)-(o4).
We certified (o1) with a {\bf certificate of $\cL$-infeasibility} in \Cref{infeasibility-certificate}. 
(o2) is similar to linear programming.
\begin{theorem}\label{unbounded}
Let $\cL\supset\Z$ be a dense subset of $\R$ that is closed under addition and negation.
Suppose that \eqref{dlp} is feasible.
Then the following are equivalent:
\begin{enumerate}[\;\;(a)]
\item \eqref{dlp} is unbounded,
\item $\max\{c^\top x:Ax\leq b\}$ is unbounded,
\item there exists $r\in\Z^n$ with $Ar\leq\0$ and $c^\top r>0$.
\end{enumerate}
A {\bf certificate of unboundedness} is a pair $(\bar{x},r)$, where $\bar{x}$ is a feasible solution to \eqref{dlp}, and $r$ is from~(b). 
\end{theorem}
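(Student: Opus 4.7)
The plan is to establish a cycle of implications $(a) \Rightarrow (b) \Rightarrow (c) \Rightarrow (a)$.

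\textbf{Step 1: $(a) \Rightarrow (b)$.} This is essentially by inclusion. Since $\cL^n \subseteq \R^n$, every point feasible for \eqref{dlp} is also feasible for the real relaxation $\max\{c^\top x : Ax \leq b\}$. Thus, if \eqref{dlp} is unbounded, producing feasible $\cL$-points with arbitrarily large objective, the same points witness unboundedness of the real LP.

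\textbf{Step 2: $(b) \Rightarrow (c)$.} Standard LP theory tells us that a feasible real LP $\max\{c^\top x : Ax \leq b\}$ is unbounded if and only if its recession cone contains a direction with positive objective, i.e., the system $Ar \leq \0$, $c^\top r > 0$ has a real solution. Equivalently, the rational polyhedron $C := \{r \in \R^n : Ar \leq \0,\, c^\top r \geq 1\}$ is nonempty. Since $A$ and $c$ are integral, $C$ is a rational polyhedron, so it contains a rational point $r_0$; clearing denominators (multiplying by a sufficiently large positive integer) produces $r \in \Z^n$ with $Ar \leq \0$ and $c^\top r \geq 1 > 0$.

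\textbf{Step 3: $(c) \Rightarrow (a)$.} Let $\bar{x} \in \cL^n$ be any feasible point of \eqref{dlp} (exists by the feasibility hypothesis), and let $r \in \Z^n$ be as in (c). Since $\cL \supset \Z$, we have $r \in \cL^n$. For any positive integer $k$, the vector $\bar{x} + kr$ lies in $\cL^n$ (using closure of $\cL$ under addition and the fact that $kr \in \Z^n \subseteq \cL^n$), satisfies $A(\bar{x}+kr) = A\bar{x} + k(Ar) \leq b$, and attains objective value $c^\top\bar{x} + k\,c^\top r$, which tends to $+\infty$ as $k \to \infty$. Hence \eqref{dlp} is unbounded.

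The only step requiring any care is $(b)\Rightarrow(c)$, specifically the derivation of an \emph{integer} direction $r$ from the existence of a real one; this is where the integrality of $A$ and $c$ is used, via the rationality of the recession cone. Once $r$ is integral, the assumption $\cL \supset \Z$ painlessly transports the unboundedness from the real LP back to the $\cL$-LP in Step 3. The final sentence of the theorem, defining a certificate of unboundedness as the pair $(\bar{x}, r)$, is then immediate: $\bar{x}$ witnesses feasibility and $r$ witnesses (c), and together Steps 2-3 show that such a pair certifies outcome (o2).
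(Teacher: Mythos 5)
Your proposal is correct and follows essentially the same route as the paper: $(a)\Rightarrow(b)$ by relaxation, $(b)\Leftrightarrow(c)$ by standard LP theory on the recession cone together with rationality and scaling to get an integral $r$, and $(c)\Rightarrow(a)$ by adding nonnegative integer multiples of $r$ to a feasible point and using $\Z\subset\cL$ with closure under addition. No gaps.
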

\begin{proof}
Denote by (P) the linear program $\max\{c^\top x:Ax\leq b\}$.
Since \eqref{dlp} is feasible, so is (P).
For the equivalence between (b) and (c), see Proposition 3.9 in \cite{ConfortiCornuejolsZambelli2014}.
Note, that for (c) the condition that $r$ is rational is equivalent to the condition that $r$ is integral because of scaling.
We show that (a) and (b) are equivalent.
First if \eqref{dlp} is unbounded, so is (P) since it is a relaxation of \eqref{dlp}.
Thus assume that (P) is unbounded.
Then there exists $r$ as described in (c).
Let $\bar{x}$ be a feasible solution to \eqref{dlp} and for any $\lambda$ define $x(\lambda):=\bar{x}+\lambda r$.
Consider $\lambda\in\Z$ where $\lambda\geq 0$. Then $\lambda r\in\Z^n$.
By the hypothesis $\cL\supset\Z$, it follows that $\lambda r\in\cL^n$.
Since $\cL$ is closed under addition and negation, $x(\lambda)\in\cL^n$ and is therefore feasible for \eqref{dlp}.
For $\lambda\rightarrow\infty$ we have $c^\top x(\lambda)\rightarrow\infty$.
Hence, \eqref{dlp} is unbounded.
\end{proof}

We next distinguish between outcomes (o3) and (o4).
\begin{theorem}\label{o3-versus-o4}
Let $\cL$ be a dense subset of $\R$ that is closed under addition and negation.
Suppose that \eqref{dlp} is feasible and bounded.
Then the following are equivalent,
\begin{enumerate}[\;\;(a)]
\item \eqref{dlp} has no optimal solution,
\item there exist $\bar{x}\in\R^n$ and $\bar{y}, \bar{u}\in\R^m$ that satisfy the following conditions,
\begin{enumerate}[(i)]
\item $\supp(\bar{u})\subseteq\supp(\bar{y})$,
\item $A\bar{x}\leq b$, $A^\top\bar{y}=c$, $\bar{y}\geq\0$, $c^\top\bar{x}=b^\top\bar{y}$,
\item $A^\top\bar{u}$ is integral and $b^\top\bar{u}\notin\cL$.
\end{enumerate}
\end{enumerate} A {\bf certificate of unattainability} is a triple $(\bar{x}, \bar{y}, \bar{u})$ satisfying (i)-(iii).
\end{theorem}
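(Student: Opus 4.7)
The plan is to reduce the theorem to \Cref{dyadic-face} applied to the LP relaxation $(\mathrm{P}):\;\max\{c^\top x:Ax\leq b\}$. Two preliminary facts are needed: that $(\mathrm{P})$ has an optimal solution of some value $\tau$, and that $\sup\eqref{dlp}=\tau$. Once these are in hand, with $F:=\{x\in P:c^\top x=\tau\}$ the LP optimal face, \eqref{dlp} attains its supremum if and only if $F\cap\cL^n\neq\emptyset$, and \Cref{dyadic-face} supplies the certificate $(\bar y,\bar u)$ exactly when $F\cap\cL^n=\emptyset$. (The degenerate case $\cL=\R$ is vacuous since then both (a) and (b) fail, so I assume $\cL\subsetneq\R$ for the rest.)

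First I would argue that $(\mathrm{P})$ is bounded; feasibility is inherited from \eqref{dlp}. If $(\mathrm{P})$ were unbounded, LP theory would give $r\in\Z^n$ with $Ar\leq\0$ and $c^\top r>0$. Density and non-emptiness of $\cL$ supply a positive $\alpha\in\cL$, and closure of $\cL$ under integer multiples (implied by closure under addition and negation) gives $k\alpha r\in\cL^n$ for every $k\in\Z_{\geq 0}$. Adding this to any $\cL$-feasible $\bar x^0$ of \eqref{dlp} yields $\cL$-feasible solutions whose objective tends to $+\infty$, contradicting boundedness of \eqref{dlp}. Hence $(\mathrm{P})$ attains an optimum $\tau$ and, by LP duality, the dual has an optimal $\bar y\in\Q^m$ with $A^\top\bar y=c$, $\bar y\geq\0$, $b^\top\bar y=\tau$.

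Next I would prove $\sup\eqref{dlp}=\tau$. The inequality $\sup\leq\tau$ is immediate. For the reverse, fix $\epsilon>0$ and consider the rational polyhedron $P_\epsilon:=\{x:Ax\leq b,\;c^\top x\geq\tau-\epsilon\}$. I claim $\aff(P_\epsilon)=\aff(P)$: if $F=P$ this is trivial, and otherwise any $\hat x\in\mathrm{relint}(P)$ satisfies $c^\top\hat x<\tau$, and interpolating between $\hat x$ and an LP optimum produces a point $x(\mu)\in\mathrm{relint}(P)$ with $c^\top x(\mu)>\tau-\epsilon$; a small ball around $x(\mu)$ in $\aff(P)$ lies entirely in $P_\epsilon$, forcing the affine hulls to agree. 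Since $P\cap\cL^n\neq\emptyset$, \Cref{density-convex} gives $\aff(P)\cap\cL^n\neq\emptyset$, so \Cref{density-gen} applied to $P_\epsilon$ produces a point in $P_\epsilon\cap\cL^n$ with objective at least $\tau-\epsilon$. Letting $\epsilon\to 0$ yields $\sup\geq\tau$.

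With both facts established, the two implications follow by direct invocation of \Cref{dyadic-face}. For (a)$\Rightarrow$(b): if \eqref{dlp} has no optimal solution then $F\cap\cL^n=\emptyset$, so \Cref{dyadic-face} supplies rational $\bar y,\bar u\in\Q^m$ with $\supp(\bar u)\subseteq\supp(\bar y)$, $\bar y$ dual-optimal, and $A^\top\bar u\in\Z^n$, $b^\top\bar u\notin\cL$; setting $\bar x$ to any primal optimum completes the triple. For (b)$\Rightarrow$(a): given such a triple, condition (ii) and LP duality force $\bar x$ to be primal-optimal (hence $\bar x\in F$), so $(\bar y,\bar u)$ is the \Cref{dyadic-face} certificate showing $F\cap\cL^n=\emptyset$; since $\sup\eqref{dlp}=\tau$, any optimal $\cL$-feasible solution would have to lie in $F\cap\cL^n$, a contradiction.

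The main obstacle is establishing $\sup\eqref{dlp}=\tau$. Because LP optima sit on the boundary of $P$, a naïve attempt to approximate one by a nearby $\cL$-point may exit $P$; the technical trick is the truncation $P_\epsilon$ together with the identification $\aff(P_\epsilon)=\aff(P)$, which is precisely what allows \Cref{density-gen} to deliver $\cL$-feasible points of arbitrarily near-optimal objective value.
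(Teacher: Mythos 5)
Your proposal is correct and follows essentially the same route as the paper: the paper's proof simply observes that condition (ii) is equivalent to $\bar{y}$ being optimal for the dual of the LP relaxation and then invokes \Cref{dyadic-face}. The two preliminary facts you establish explicitly --- that the relaxation attains an optimum $\tau$ and that $\sup\eqref{dlp}=\tau$ --- are left implicit in the paper's proof but are precisely the content of the neighbouring \Cref{unbounded} and \Cref{epsilon}; your versions avoid their extra hypothesis $\cL\supset\Z$ by extracting a positive element of $\cL$ from density, a minor but genuine tightening.
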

\begin{proof}
Condition (ii) implies that $\bar{x}$ and $\bar{y}$ form a pair of primal dual solutions with the same value, hence, that $\bar{y}$ is optimal for the dual of (P).
Conversely, if $\bar{y}$ is optimal, then by strong duality there exists an optimal solution $\bar{x}$ of (P) with $c^\top\bar{x}=b^\top\bar{y}$.
Thus condition (ii) simply says that $\bar{y}$ is optimal for the dual.
The result now follows from \Cref{dyadic-face}.
\end{proof}

We can be more specific about (o4). When \eqref{dlp} is feasible and bounded, then we say that a feasible solution $\bar{x}\in\cL^n$ is an \emph{$\epsilon$-approximation} if $c^\top\bar{x}\geq\max\{c^\top x:Ax\leq b\}-\epsilon$. The result below shows that for every $\epsilon>0$ there exists an $\epsilon$-approximation.
\begin{theorem}\label{epsilon}
Let $\cL\supset\Z$ be a dense subset of $\R$ that is closed under addition and negation.
If \eqref{dlp} is feasible and bounded, then,
\[ 
\sup\left\{c^\top x:Ax\leq b,\;x\in\cL^n\right\}=\max\{c^\top x:Ax\leq b\}. 
\]
\end{theorem}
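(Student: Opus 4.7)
The plan is to approximate the LP optimum arbitrarily closely by $\cL$-feasible points. Write $\tau := \max\{c^\top x : Ax \leq b\}$; this value is finite because otherwise Theorem~\ref{unbounded} (using the hypothesis $\cL \supset \Z$) would make \eqref{dlp} unbounded, contradicting the assumption, and rational because $A,b,c$ are integral. Fix a rational $\epsilon > 0$ and introduce the sliced rational polyhedron $Q_\epsilon := P \cap \{x : c^\top x \geq \tau - \epsilon\}$, which is nonempty since it contains any LP-optimal $x^*$.

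The key step is to show $\aff(Q_\epsilon) = \aff(P)$. To prove this, pick any $z \in \mathrm{relint}(P)$ (nonempty since $P$ is) and consider the family $x_\lambda := \lambda z + (1-\lambda) x^*$ for $\lambda \in (0,1]$. The standard relative-interior identity gives $x_\lambda \in \mathrm{relint}(P)$, and since $c^\top x_\lambda = \tau - \lambda(\tau - c^\top z)$, choosing $\lambda > 0$ sufficiently small yields $c^\top x_\lambda > \tau - \epsilon$ as well (if $c^\top z \geq \tau - \epsilon$ already, any $\lambda \in (0,1]$ works). Continuity of $c^\top x$ then furnishes a neighborhood of $x_\lambda$ in $\aff(P)$ lying inside $Q_\epsilon$, whence $\dim(Q_\epsilon) = \dim(P)$ and the claim follows.

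Feasibility of \eqref{dlp} provides some $\bar{x}_0 \in P \cap \cL^n \subseteq \aff(P) \cap \cL^n = \aff(Q_\epsilon) \cap \cL^n$, so the hypotheses of Corollary~\ref{density-convex} are met by the nonempty convex set $Q_\epsilon$, whose affine hull is a translate of a rational subspace; the corollary produces a point $\bar{x} \in Q_\epsilon \cap \cL^n$, i.e., an $\cL$-feasible solution of \eqref{dlp} with $c^\top \bar{x} \geq \tau - \epsilon$. Letting $\epsilon \downarrow 0$ through positive rationals gives $\sup \geq \tau$, and $\sup \leq \tau$ is immediate from $P \cap \cL^n \subseteq P$. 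The main obstacle I anticipate is the dimension-preservation $\aff(Q_\epsilon) = \aff(P)$: slicing by a linear inequality could in principle drop dimension, and ruling this out requires exhibiting a relative-interior point of $P$ that still respects the sliced threshold, which is exactly what the convex combination with an LP optimum provides.
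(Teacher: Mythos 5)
Your proposal is correct and follows essentially the same route as the paper: slice off $Q_\epsilon := P \cap \{x : c^\top x \geq \tau-\epsilon\}$, show $\aff(Q_\epsilon)=\aff(P)$, invoke \Cref{density-convex} to find a point of $Q_\epsilon\cap\cL^n$, and let $\epsilon\to 0$. The only (cosmetic) difference is that you justify $\aff(Q_\epsilon)=\aff(P)$ via a relative-interior/convex-combination argument, whereas the paper phrases it as the new constraint not being an implicit equality; both are valid.
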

\begin{proof}
Denote by (P) the linear program $\max\{c^\top x:Ax\leq b\}$.
Since \eqref{dlp} is feasible so is (P).
Since \eqref{dlp} is bounded, so is (P) by \Cref{unbounded}.
It follows that (P) has an optimal solution, say $\bar{x}$ and let $\tau:=c^\top\bar{x}$.
Pick $\epsilon>0$ and define $Q_\epsilon:=\{x:Ax\leq b\}\cap\{x:c^\top x\geq\tau-\epsilon\}$.
Because of $\bar{x}$, $c^\top x\geq\tau-\epsilon$ is not an implicit equality of $Q_\epsilon$.
Therefore, $\{x:Ax\leq b\}$ and $Q_\epsilon$ have the same implicit equalities, say $A^=x\leq b^=$.
Hence, $\aff(\{x:Ax\leq b\})=\aff(Q_\epsilon)=\{x:A^=x=b^=\}$.
Since \eqref{dlp} is feasible, there exists a solution to $A^=x=b^=$ in $\cL^n$.
Hence, by \Cref{density-convex} there exists a point in $Q_\epsilon\cap\cL^n$.
In particular, $\sup\{c^\top x:Ax\leq b,\;x\in\cL^n\}$ has a feasible solution of value $\geq \tau-\epsilon$.
Letting $\epsilon\rightarrow 0$ proves the result.
\end{proof}

%%%%%%%%%%
\subsubsection{Outcomes for primal-dual pairs}
%%%%%%%%%%
Here we review the possible outcomes for a primal-dual pair of $\cL$-linear programs.
Namely, we consider the following pair of optimization problems,
\begin{align*}
& \sup\bigl\{c^\top x:Ax\leq b,\;x\in\cL^n\bigr\}, \tag{P} \\
& \inf\bigl\{b^\top y:A^\top y=c,\;y\geq\0, y\in\cL^n\bigr\} \tag{D}
\end{align*}
where $\cL\supset\Z$ is dense in $\R$ and closed under addition and negation.
We observed that each of (P) and (D) has 4 possible outcomes (o1)-(o4). 
In the next table we indicate the possible pairs of outcomes for (P) and (D).
\begin{table}[htb]
\begin{center}
\begin{tabular}{c||c|c|c|c|}
& (o1) & (o2) & (o3) & (o4) \\ \hline
(o1) & \cmark & \cmark & \cmark & \cmark \\ \hline
(o2) & \cmark & \xmark & \xmark  & \xmark \\ \hline
(o3) & \cmark & \xmark & \cmark & \cmark \\ \hline
(o4) & \cmark & \xmark & \cmark  & \cmark \\ \hline
\end{tabular}
\end{center}
\vspace{-0.1in} \caption{\small Possible outcomes for (P) and (D).\label{table-outcomes}}
\end{table}
The rows of the matrix indicate the possible outcomes for (P) and the columns of the matrix indicate the possible outcomes for (D).
A check mark in the table indicates that the corresponding outcome is possible, and a cross that the outcome is not possible.
We illustrate some of these outcomes. Consider
\begin{align}
&\sup\{3x:3x=1,x\geq\0, x\;\mbox{dyadic}\}, \label{o1o3} \\
&\sup\{x:3x=1,x\geq\0,x\;\mbox{dyadic}\}, \label{o1o4} \\
&\sup\{x:3x\leq 3,x\geq\0,x\;\mbox{dyadic}\},\label{o3o4}\\
&\sup\{x:3x\leq 1,x\geq\0,x\;\mbox{dyadic}\} \label{o4o4}.
\end{align}
Then for (P) described as \eqref{o1o3} and its dual (D) we have outcomes (o1) for (P) and (o3) for (D).
Similarly, 
\eqref{o1o4} corresponds to row (o1) and column (o4);  
\eqref{o3o4} corresponds to row (o3) and column (o4);  and
\eqref{o4o4} corresponds to row (o4) and column (o4).  
\Cref{table-outcomes}  is also applicable to pairs of primal-dual convex optimization  problems in conic form 
(although the details of their duality theory are significantly  more complicated than our setting, from this very specific and high level view, they coincide).

We close this discussion by including a table that indicates the possible pairs of outcomes for (P) and the LP relaxation of (D).
The rows of the matrix indicate the possible outcomes for (P) and the columns of the matrix indicate the possible outcomes for the  LP relaxation of (D).
\begin{table}[h]
\begin{center}
\begin{tabular}{c||c|c|c|}
& (o1) & (o2) & (o3) \\ \hline
(o1) & \cmark & \cmark & \cmark  \\ \hline
(o2) & \cmark & \xmark & \xmark  \\ \hline
(o3) & \xmark & \xmark & \cmark  \\ \hline
(o4) & \xmark & \xmark & \cmark   \\ \hline
\end{tabular}
\end{center}
\vspace{-0.1in}\caption{\small Possible outcomes for (P) and the LP relaxation of its dual (D).}
\end{table}
\vspace{-0.1in}
%%%%%%%%
\subsubsection{Algorithm for solving $\cL$-linear programs}
%%%%%%%%
We consider $\cL$ satisfying the (group), (density) and (membership) properties (with $p$ given explicitly).
Observe that $\cL\supset\Z$ and $\cL$ satisfies the hypotheses of \Cref{unbounded}, \Cref{o3-versus-o4}, and \Cref{epsilon}, 
that characterize possible outcomes, and $\epsilon$-approximation for $\cL$-linear programs.

We are given $\epsilon>0$, integral matrices and vectors $A,b,c$ describing the $\cL$-linear program \eqref{dlp}.
Then in oracle polynomial time we return one of the following, 
(i) a certificate of real-infeasibility, 
(ii) a certificate of $\cL$-infeasibility,
(iii) a certificate of unboundedness, 
(iv) an optimal solution,
(v) a certificate of unattainability and an $\epsilon$-approximation. 
Note, if we know that we have one of outcomes (o1), (o2), (o3) then it is not necessary to supply $\epsilon$. 
A schematic representation of the algorithm is given in \Cref{solver-description}.
\begin{figure}[htb]
\begin{center}
\begin{tikzpicture}[font=\small,thick]
% Start block
\node[draw,
    rectangle,
    minimum width=4.5cm,
    minimum height=1cm] (block1) {Check $\cL$-feasibility using LFP Algorithm};
  
 % LFP outcome
\node[draw,
    rounded rectangle,
    right=2cm of block1,
    minimum width=2.5cm,
    minimum height=1cm] (block5) {(o1)};  
    
\node[draw,
    rectangle,
    below=of block1,
    minimum width=2.5cm,
    minimum height=1cm] (block2) { Algorithm D};

\node[draw,
    rounded rectangle,
    right=3.75cm of block2,
    minimum width=2.5cm,
    minimum height=1cm] (block6) {(o2)};
 
\node[draw,
    rectangle,
    below=of block2,
    minimum width=2.5cm,
    minimum height=1cm] (block3) { LFP algorithm on the optimal face};
    
\node[draw,
    rounded rectangle,
    right=2.45cm of block3,
    minimum width=2.5cm,
    minimum height=1cm] (block7) { (o3)}; 
 
 \node[draw,
    rectangle,
    below=of block3,
    minimum width=2.5cm,
    minimum height=1cm] (block4) { LFP algorithm on $F'$}; 
 
% Return block
\node[draw,
    rounded rectangle,
    below=of block4,
    minimum width=2.5cm,
    minimum height=1cm] (block8) { (o4)};

% Arrows
 \draw[-latex] (block1) edge node[pos=0.4,fill=white,inner sep=2pt]{No}(block5)
    node[pos=0.25,fill=white,inner sep=0]{};
    
     \draw[-latex] (block1) edge node[pos=0.4,fill=white,inner sep=2pt]{$x^f$}(block2)
    node[pos=0.25,fill=white,inner sep=0]{};

 \draw[-latex] (block2) edge node[pos=0.4,fill=white,inner sep=2pt]{LP unbounded}(block6)
    node[pos=0.25,fill=white,inner sep=0]{};
    
\draw[-latex] (block2) edge node[pos=0.4,fill=white,inner sep=2pt]{strictly complementary pair}(block3)
 node[pos=0.25,fill=white,inner sep=0]{};    
     
 \draw[-latex] (block3) edge node[pos=0.4,fill=white,inner sep=2pt]{Yes}(block7)
    node[pos=0.25,fill=white,inner sep=0]{};
    
 \draw[-latex] (block3) edge node[pos=0.4,fill=white,inner sep=2pt]{certificate of unattainability}(block4) node[pos=0.25,fill=white,inner sep=0]{};        

 \draw[-latex] (block4) edge node[pos=0.4,fill=white,inner sep=2pt]{$\epsilon$-approximation $\bar{x}$}(block8)
    node[pos=0.25,fill=white,inner sep=0]{};
 
\end{tikzpicture}
\end{center}
\vspace{-0.1in}\caption{Schematic description of the $\cL$-linear program solver \label{solver-description}}
\end{figure}
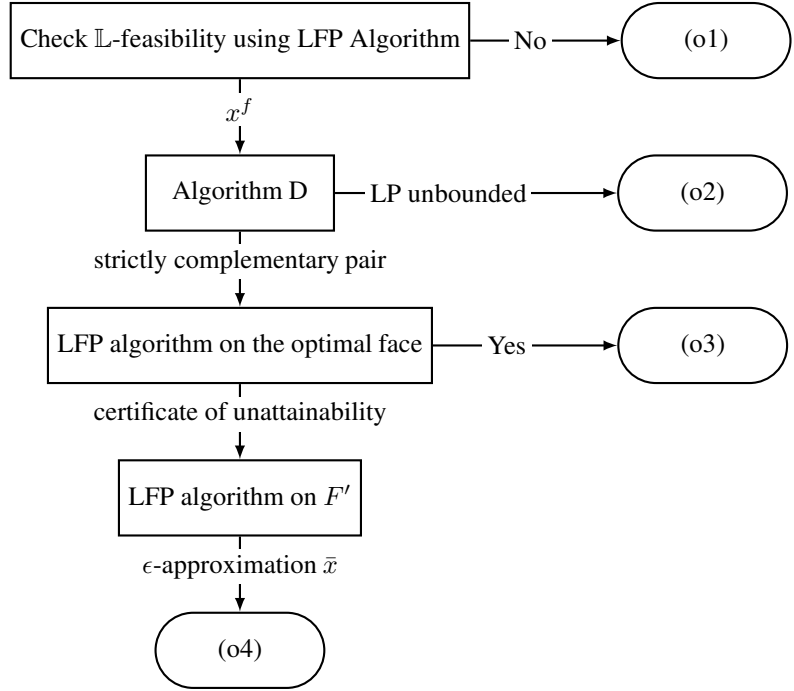

First we apply the LFP algorithm to $P=\{x:Ax\leq b\}$.
If it returns a certificate of real-infeasibility, or a certificate of $\cL$-infeasibility, we stop.
Otherwise, it returns a point $x^f\in P\cap\cL^n$.
We then consider the linear programming relaxation,
\begin{equation}\tag{P}
\max\{c^\top x:Ax\leq b\},
\end{equation}
of \eqref{dlp} and use Algorithm D to solve it.
Note that since \eqref{dlp} is feasible so is (P).
If (P) is unbounded, then Algorithm D will return a rational vector $r$ with $Ar\leq\0$.
After scaling we may assume that $r\in\Z^n$ and $x^f$, $r$ is a certificate of unboundedness.
Thus, we may assume that Algorithm D returns a strictly complementary pair $x^*$ of (P) and $y^*$ of its dual (D).
Then define the face 
\[
F:=Q\cap\{x:c^\top x\geq c^\top x^*\}.
\]
Let $I^=:=\supp(y^*)$ and let $A^=x\leq b^=$ be the inequalities from $Ax\leq b$ indexed by $I^=$.
Note that face $F$ is obtained from $P$ by setting some inequalities to equalities. 
Furthermore, by strict complementarity, $A^=x=b^=$ is the set of implicit equalities of $F$.
It follows that,
\begin{claim*}
$F=P\cap\{x:A^=x=b^=\}$ and $\aff(F)=\{x:A^=x=b^=\}$.
\end{claim*}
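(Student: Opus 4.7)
The plan is to prove both identities by first showing that $I^{=} = \supp(y^*)$ coincides exactly with the set of implicit equalities of the face $F$; once that coincidence is established, the two conclusions of the claim are precisely the statements invoked at the start of the proof of \Cref{dyadic-face}, namely Theorems~3.17 and~3.24 of \cite{ConfortiCornuejolsZambelli2014}. Alternatively, I would prove both identities directly using complementary slackness; this is the route I will sketch.

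First I would establish $F \subseteq P \cap \{x : A^{=}x = b^{=}\}$. Pick $\bar{x} \in F$. Since $\bar{x} \in P$ and $c^\top \bar{x} \geq c^\top x^*$, optimality of $x^*$ for (P) forces $c^\top \bar{x} = c^\top x^*$, which by strong duality equals $b^\top y^*$. Now the identity $(y^*)^\top b = (y^*)^\top A \bar{x}$ rewrites as $\sum_{i} y^*_i \bigl(b_i - \row_i(A)\bar{x}\bigr) = 0$, and since $y^* \geq \0$ and $A\bar{x} \leq b$, every term must vanish. In particular, for each $i \in \supp(y^*) = I^{=}$ we get $\row_i(A)\bar{x} = b_i$, so $A^{=}\bar{x} = b^{=}$.

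Next I would prove the reverse inclusion. Given $\bar{x} \in P$ with $A^{=}\bar{x} = b^{=}$, use $A^\top y^* = c$ and $y^*_i = 0$ for $i \notin I^{=}$ to compute
\[
c^\top \bar{x} \;=\; (y^*)^\top A \bar{x} \;=\; \sum_{i \in I^{=}} y^*_i \row_i(A)\bar{x} \;=\; \sum_{i \in I^{=}} y^*_i b_i \;=\; b^\top y^* \;=\; c^\top x^*,
\]
so $\bar{x} \in F$. This yields $F = P \cap \{x : A^{=}x = b^{=}\}$.

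For the affine hull equality, the inclusion $\aff(F) \subseteq \{x : A^{=}x = b^{=}\}$ is immediate because the right-hand side is an affine subspace containing $F$. The reverse inclusion is where strict complementarity is essential: since $x^*$ and $y^*$ are strictly complementary, $\row_i(A)x^* < b_i$ for every $i \notin I^{=}$, i.e., $x^*$ satisfies the constraints indexed by $I^{<} := [m]\setminus I^{=}$ with strict inequality. Given any $z \in \{x : A^{=}x = b^{=}\}$, for sufficiently small $\lambda > 0$ the point $x^* + \lambda(z - x^*)$ continues to satisfy $A^{<}x < b^{<}$ by continuity while remaining in $\{x : A^{=}x = b^{=}\}$, hence lies in $F$. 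Thus $z$ is an affine combination of two distinct points of $F$ (namely $x^*$ and this perturbed point), so $z \in \aff(F)$.

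The only subtle step is the affine-hull direction: this is where strict complementarity, rather than plain complementary slackness, is indispensable, as it is what supplies a point of $F$ that sits strictly inside all non-tight inequalities and therefore anchors the perturbation argument.
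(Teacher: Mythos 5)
Your proof is correct. The paper itself gives essentially no argument: it asserts that, by strict complementarity, $A^=x=b^=$ is the set of implicit equalities of $F$, and then lets the two identities follow from the standard facts about faces and their affine hulls (Theorems~3.24 and~3.17 of \cite{ConfortiCornuejolsZambelli2014}, the same ones invoked at the start of the proof of \Cref{dyadic-face}). You instead prove everything from first principles: complementary slackness gives $F=P\cap\{x:A^=x=b^=\}$ in both directions, and strict complementarity supplies the point $x^*$ lying strictly inside every constraint indexed by $I^<$, which anchors the perturbation argument showing $\{x:A^=x=b^=\}\subseteq\aff(F)$. The underlying mechanism is identical --- strict complementarity is exactly what makes $I^==\supp(y^*)$ the implicit-equality set of $F$ --- but your version is self-contained where the paper defers to references, and it correctly isolates where plain complementary slackness suffices versus where strictness is indispensable. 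The only cosmetic remark is that when $z=x^*$ the ``two distinct points'' phrasing degenerates, but then $z\in F\subseteq\aff(F)$ trivially, so nothing is lost.
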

\noindent
We then apply the LFP algorithm to the system $Ax\leq b, A^=x\geq b^=$ that defines $F$.
If the algorithm returns a point $x^d\in F\cap\cL^n$ then $x^d$ is an optimal solution to \eqref{dlp} and we stop.
Otherwise the LFP algorithm will return a certificate $\bar{y},\bar{u}$ of $\cL$-infeasibility for $F$.
\Cref{infeasibility-certificate} implies that $\supp(\bar{y})\subseteq I^=$.
Then we have $\supp(\bar{u})\subseteq\supp(\bar{y})\subseteq I^==\supp(y^*)$.
It follows that $x^*,y^*,\bar{u}$ is a certificate of unattainability. 

At this juncture we look for an $\epsilon$-approximation by running the LFP algorithm for the polyhedron,
\[
F'=P\cap\{x:c^\top x\geq c^\top x^*-\epsilon\}.
\]
Note that since $A,b,c$ are rational, so is $c^\top x^*$, so $F'$ is a rational polyhedron.
By \Cref{epsilon} $F'$ contains a point in $\cL^n$.
We use the LFP algorithm once more to find such a point in $F'$.

%%%%%%%%
\subsubsection{Solving linear programs versus solving $\cL$-linear programs}
%%%%%%%%
Solving the $\cL$-linear program required at most two calls of the LFP procedure for outcomes (o1)-(o3) and three calls in case of outcome (o4). Thus the running time of our $\cL$-linear program solver is at most three times that of LFP. 

We have shown that we can use a black-box algorithm that solves the $\cL$-feasibility problem to solve $\cL$-linear programs.
Moreover, our LFP algorithm leverages a linear program solver to solve the $\cL$-feasibility problem.
Thus $\cL$-linear programs can be solved using a black-box linear program solver.
Here we show the converse, namely one may use a single call to a black-box algorithm for solving $\cL$-linear programs, 
to solve linear feasibility problems over rationals, i.e., finding a rational point in a polyhedron (which in turn is sufficient for solving linear programs).

Suppose we are given $A \in \Z^{m \times n}$ of full row rank and $b\in \Z^m$ and want to find $x \in \Q^n$ such that $Ax=b, x \geq \0$ or prove that no such $x$ exists.
We are given an $\cL$-linear program solver that works for a specific set $\cL$ where $\cL$ satisfies our (group), (density), and (membership) conditions.
Note that we do not get to choose $\cL$, it could be for instance that $\cL$ is the set of $p$-adic numbers for $p=2^{82,589,933}-1$ (which happens to be prime).
We feed the $\cL$-linear program solver the optimization problem:
\begin{equation}\label{BlackboxL}
\inf \left\{t \, : \, Ax + b t = b, \, x \geq \0, \, t \geq 0, \, x \in \cL^n, \, t \in \cL \right\},
\end{equation}
and we pick $\epsilon := 2^{-2 \mbox{ size}(A,b)}$.

Note that with $\bar{x} :=\0$, $\bar{t}:=1$, $(\bar{x},\bar{t}) \in \cL^{n+1}$ makes a feasible solution for \eqref{BlackboxL}.
Since \eqref{BlackboxL} is not unbounded ($t \geq 0$ for all feasible solutions), the only possible outcomes are (o3) and (o4). 
In case of (o3), the $\cL$-linear program solver returns a feasible solution $(\bar{x},\bar{t})$ of \eqref{BlackboxL} with the smallest possible $\bar{t}$. 
If $\bar{t} = 0$ then $\bar{x} \in \Q^n_{\geq 0}$ such that $A\bar{x}=b$. 
If $\bar{t}>0$ then \Cref{epsilon} proves that the LP relaxation of \eqref{BlackboxL} has optimal value $\bar{t}>0$, in particular $Ax=b, x\geq\0$ is infeasible.
In the case of outcome (o4), the solver returns a certificate of unattainability and an $\epsilon$-approximation $(\bar{x}, \bar{t})$. 
If $\bar{t} > \epsilon$ then again \Cref{epsilon} proves that the LP relaxation of \eqref{BlackboxL} has optimal value $\bar{t}>0$, and $Ax=b, x\geq\0$ is infeasible.
Otherwise, $\bar{t} \leq \epsilon$. 
Since $\bar{t} \leq 2^{-2 \mbox{ size}(A,b)}$, every extreme point of 
\[
\left\{ (x,t) \in \R^{n+1} \, : \, Ax +bt = b, \,\, x \geq \0, \,\, t \geq 0\right\}
\]
whose objective value is less than or equal to $\bar{t}$ must have $\bar{t} = 0$. 
Such an extreme point exists (and is guaranteed to be rational due to integrality of $A$ and $b$), 
and can be found in strongly polynomial time (i.e., the number of elementary arithmetic operations is bounded above by a polynomial function of $n$ only), starting with $(\bar{x}, \bar{t})$ (see, for instance, \cite{Megiddo1991} and \cite[Section 4.3]{Tuncel2010}). 
The ``$x$-part'' of such an extreme point is a rational solution of $Ax=b, x \geq \0$.

One of the complexity measures of our algorithms and analyses is based on the radius of the largest Euclidean ball contained in
some polyhedra. Similar complexity measures have been used before in analyzing the complexity of algorithms based on the ellipsoid method as well as interior-point algorithms for linear programs in particular, and convex optimization problems in general. We will remark on the primal-dual version. Consider the primal-dual pair of LPs where $A$ is $m$-by-$n$:
\[
\min\left\{c^{\top}x \, : \, Ax=b, \,\, x \geq \0\right\}
\hspace{.5cm} \textup{ and } \hspace{.5cm}
\max\left\{b^{\top}y \, : \, A^{\top} y + s =c, \,\, s \geq \0\right\}.
\]
Suppose that both are feasible. Let $[B,N]$ denote the strict complementarity partition
of $[n]$ for this primal-dual pair. One then defines
\begin{align*}
r_P(A,b,c) & :=  \min_{j \in B} \left\{\max\left\{ x_j \, : \, A_B x_B=b, \,\, x_B \geq \0\right\}\right\},\\
r_D(A,b,c) & := \min_{j \in N} \left\{\max\left\{ s_j \, : \,  A_B^{\top}y = c_B, \,\, A_N^{\top} y + s_N =c_N, \,\, s_N \geq \0\right\}\right\},\\
 r(A,b,c) & := \min\left\{r_P (A,b,c),r_D (A,b,c)\right\}.
 \end{align*}
 There are polynomial time algorithms for linear programs whose complexity are bounded above by a polynomial function of
 $n$ and $\log(1/r(A,b,c))$ \cite{MY1993,Ye1997} (assuming suitable feasible start, or one can apply the approach to a homogeneous self-dual
 reformulation, see \cite{YTM1994,HT2002}, the reformulation would change $r$ though). 
 Note that $r_P(A,b,c)$ is essentially the radius we used in our approach for problems in standard equality form. 
 $r_D(A,b,c)$ would also correspond to the radius we used for problems in inequality form, provided the columns of
 $A$ are scaled so that they are all approximately of unit norm. 
 In this latter case, $r(A,b,c)$ would also be relevant for our analysis when we are interested in finding solutions for primal-dual pair of $\cL$-linear programs.
% !TEX root = main.tex
%last revised Sep 6, 2023 by Ahmad
%%%%%%%%%%%%%%%%%%%%%%%%%%
\section{Bounding the fractionality of dyadic solutions} \label{section:fractionality}
%%%%%%%%%%%%%%%%%%%%%%%%%%

Given a dyadic linear program $Ax\leq b, x \text{ dyadic}$ that is feasible, for some $A\in \Z^{m\times n}$ and $b\in \Z^m$, can we find a solution whose denominators are ``small"? In this section, we prove that if the program is feasible, then there exists a $\frac{1}{2^k}$-integral solution, where $k\leq \left\lceil\log_2 n+ (2n+1)\log_2(\|A\|_\infty\sqrt{n+1})\right\rceil$. In doing so, we take an integer linear programming perspective towards dyadic linear programs, in contrast to the linear programming approach taken in the previous sections. The results in this section are presented more generally for the $p$-adic numbers for any prime $p\geq 2$.

\paragraph{A hierarchy of integer linear programs.}
Let $p$ be a prime, and $\cL:=\left\{\frac{a}{p^\ell}:a,\ell\in \Z,\ell\geq 0\right\}$ the set of $p$-adic numbers. Consider the $\cL$-linear program:
\begin{equation}\label{eqn:plp}\tag{$\plp$}
\sup\{c^\top x:Ax\leq b, x \text{ is $p$-adic}\},
\end{equation} where $A\in \Z^{m\times n},b\in\Z^m,c\in\Z^n$. 
For each integer $k\geq 0$, consider the following restriction of \eqref{eqn:plp}: 
\begin{equation}\label{eqn:plpk}\tag{$\plp_k$}\max\{c^\top x:Ax\leq b,x \text{ is $1/p^k$-integral}\},
\end{equation} where we replaced $\sup$ by $\max$ because every element of the domain is isolated. In fact, \eqref{eqn:plpk} is equivalent to the ILP $\max\{c^\top z:Az\leq p^k b,z\in \Z^n\}$, in the sense that if $x$ is a feasible solution to \eqref{eqn:plpk} of value $\alpha$, then $p^k x$ is a feasible solution to the ILP of value $p^k \alpha$, and if $z$ is a feasible solution to the ILP of value $\beta$, then $\frac{1}{p^k}z$ is a feasible solution to \eqref{eqn:plpk} of value $\frac{1}{p^k} \beta$. 

Denote by $\sup(\plp)$ the supremum value of \eqref{eqn:plp}; if the $\cL$-LP is infeasible then $\sup(\plp):=-\infty$, and if it is unbounded then $\sup(\plp):=+\infty$.\footnote{
Observe that when $\sup(\plp)$ is finite, then $\sup(\plp) = \max\{c^\top x:Ax\leq b\}$ by \Cref{epsilon}. This equality, however, is not the focus in this section.}
Similarly, we define $\opt(\plp_k)$ for each integer $k\geq 0$. Clearly we have the following chain of inequalities: \begin{equation}\label{ineq-chain}
\opt(\plp_0)\leq\opt(\plp_1)\leq \opt(\plp_2)\leq \cdots \leq \sup(\plp) 
\end{equation} where any value in the chain may be finite or $\pm \infty$. 
In fact, we have the following theorem, thereby justifying our integer linear programming perspective.

\begin{theorem}
$\lim_{k\to \infty}\opt(\plp_k)=\sup(\plp)$. In fact, $\opt(\plp_k)=\sup(\plp)$ for sufficiently large $k$, unless $\sup(\plp)$ is finite and not attained by any feasible solution to \eqref{eqn:plp}. \end{theorem}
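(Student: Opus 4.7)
The plan is to split into four cases based on $\sup(\plp)$: (i) $\sup(\plp)=-\infty$, (ii) $\sup(\plp)=+\infty$, (iii) $\sup(\plp)$ finite and attained by some feasible $p$-adic point, and (iv) $\sup(\plp)$ finite but not attained. The chain \eqref{ineq-chain} already supplies $\opt(\plp_k)\le \sup(\plp)$ in every case, so in each case I only need a matching lower bound. In the first three cases I expect to obtain equality for all sufficiently large $k$; case (iv) will be the ``unless'' exception.

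Cases (i) and (iii) are immediate. In case (i), no $p$-adic feasible point exists, hence no $1/p^k$-integral feasible point exists either, so $\opt(\plp_k)=-\infty$ for every $k$. In case (iii), any attaining $p$-adic point $x^*$ has some largest denominator $p^{k_0}$, so $x^*$ is $1/p^k$-integral for every $k\ge k_0$, giving $\opt(\plp_k)\ge c^\top x^*=\sup(\plp)$ past that threshold.

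For case (ii), I would invoke the certificate of unboundedness from \Cref{unbounded}, which provides a feasible $p$-adic point $\bar x$ together with an \emph{integer} recession direction $r\in\Z^n$ satisfying $Ar\le\0$ and $c^\top r>0$. Choosing $k_0$ so that $\bar x$ is $1/p^{k_0}$-integral, the family $\{\bar x+\lambda r:\lambda\in\Z_{\ge 0}\}$ consists of $1/p^k$-integral feasible solutions with objective tending to $+\infty$ for every $k\ge k_0$, so $\opt(\plp_k)=+\infty$ there.

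Case (iv) is the one I expect to be the real obstacle. The limit equality $\lim_{k\to\infty}\opt(\plp_k)=\sup(\plp)$ is straightforward: given $\epsilon>0$, the definition of supremum supplies a feasible $p$-adic $x$ with $c^\top x>\sup(\plp)-\epsilon$, and for $k$ past the largest denominator of $x$ this $x$ is $1/p^k$-integral, yielding $\opt(\plp_k)>\sup(\plp)-\epsilon$. The harder part is the strict inequality $\opt(\plp_k)<\sup(\plp)$ at every $k$: I would argue that, after scaling by $p^k$, $\plp_k$ is the integer linear program $\max\{c^\top z:Az\le p^kb,\,z\in\Z^n\}$, which is feasible (for $k$ large enough) and bounded above by $p^k\sup(\plp)$, hence attains its optimum by standard ILP theory. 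An equality $\opt(\plp_k)=\sup(\plp)$ would then promote an optimal $z^*$ to a $p$-adic optimal $z^*/p^k$ of \eqref{eqn:plp}, contradicting non-attainment, and this contradiction completes case (iv).
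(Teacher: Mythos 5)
Your proposal is correct and follows essentially the same case decomposition as the paper: trivial for $\sup(\plp)=-\infty$, an integral recession direction added to a feasible $p$-adic point for $+\infty$, the attaining point's denominator for the attained finite case, and an approximating sequence for the limit in the unattained case. The only addition is your argument that $\opt(\plp_k)<\sup(\plp)$ strictly for every $k$ in the unattained case (via attainment of the equivalent ILP); the paper does not include this, reading the ``unless'' merely as an exception to its claim rather than a biconditional, but your extra step is sound.
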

\begin{proof}
There are three cases: \begin{enumerate}
\item
If $\sup(\plp)=-\infty$, then clearly, $\opt(\plp_k)=\sup(\plp)$ for all $k\geq 0$.
\item
If $\sup(\plp)=+\infty$, then so is the linear relaxation of $(\plp)$, so there is an extreme ray $r\in \R^n$ of the polyhedron $\{x:Ax\leq b\}$ such that $c^\top r>0$. Given that $A,b$ have integral entries, we may assume that $r$ is integral. Let $\bar{x}$ be any feasible solution to $(\plp)$. Then $\bar{x}$ is $1/p^N$-integral for some integer $N\geq 0$. It can be readily checked that $(\bar{x}+t\cdot r:t=0,1,2,\ldots)$ is an infinite sequence of feasible solutions to $(\plp_k)$ of arbitrarily large objective value, for any integer $k\geq N$. Subsequently, $\opt(\plp_k)=+\infty$ for all $k\geq N$.
\item
Otherwise, $\sup(\plp)$ is finite. The inequalities in \eqref{ineq-chain}, together with the Monotone Convergent Theorem from Real Analysis, imply that $\lim_{k\to \infty} \opt(\plp_k)$ exists. In fact, the sequence must converge to $\sup(\plp)$ as we argue below.
 \begin{enumerate}
\item If the optimal value of $\eqref{eqn:plp}$ is attained, say by $\bar{x}$ that is $1/p^N$-integral for some integer $N\geq 0$, then $\bar{x}$ is an optimal solution to $(\plp_N)$, so $\opt(\plp_N)=\sup(\plp)$. Subsequently, it follows from \eqref{ineq-chain} that $\opt(\plp_k)=\sup(\plp)$ for all $k\geq N$.
\item Otherwise, $\sup(\plp)$ is finite but the optimal value is not attained by any feasible solution. In this case, by definition, there exists a sequence of feasible solutions $(\bar{x}^i:i\geq 1)$ to $(\plp)$ such that $\lim_{i\to \infty} c^\top \bar{x}^i = \sup(\plp)$. Observe that each $\bar{x}^i$ is feasible solution to $(\plp_{k})$ for some sufficiently large $k$. Subsequently, $$\sup(\plp)=\lim_{i\to \infty} c^\top \bar{x}^i\leq \lim_{k\to \infty}\opt(\plp_k)\leq \sup(\plp)$$ where the last inequality follows from \eqref{ineq-chain}. Equality must hold throughout, so we have $\lim_{k\to \infty} \opt(\plp_k)=\sup(\plp)$, as required.
\end{enumerate} 
\end{enumerate} 
\end{proof}

Given the theorem above, a natural question is for how small of a $k$ can we guarantee $\opt(\plp_k)=\sup(\plp)$? 
By switching to an equivalent model, for the sake of convenience, we may instead ask: given that $(\plp)$ is feasible, what is the smallest integer $k$ such that $(\plp_k)$ is feasible? 
In what follows, we provide a polynomial upper bound on the size of the smallest $k$.\footnote{Note that this statement has no bearing on the time complexity of solving $(\plp_k)$. Given a fixed integer $k\geq 0$ (possibly $k=0$), $(\plp_k)$ is NP-hard as its decision version is NP-complete (\emph{Given $A,b$ and $k\geq 0$, does $Ax\leq b$ have a $1/p^k$-integral solution?})~(see \cite{Arora09}, Chapter 2).} 

\begin{LE}\label{k-tilde}
Let $A$ be an $m\times n$ integer matrix of full row rank, and let $(B~\0)$ be the Hermite normal form of $A$, where $B$ is a square matrix. Then for all $b\in \Z^m$ and $k\in \Z_+$, the following statements are equivalent: \begin{enumerate}
\item[i.] $Ax=b$ has a $1/p^k$-integral solution,
\item[ii.] $B^{-1}b$ is $1/p^k$-integral.
\end{enumerate} Moreover, if $Ax=b$ has a $1/p^{\kappa}$-integral solution, and $\kappa\geq 0$ is the smallest such integer, then $p^{\kappa}$ divides $\gcd(A)$.
\end{LE}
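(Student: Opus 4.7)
The plan is to leverage the Hermite normal form directly. By the definition of the HNF, there exists a unimodular matrix $U\in\Z^{n\times n}$ with $AU = (B~\0)$; since both $U$ and $U^{-1}$ are integral, multiplication by either preserves $1/p^k$-integrality coordinate-wise.

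For the direction (ii) $\Rightarrow$ (i), I would set $\bar{x} := U\begin{pmatrix} B^{-1}b \\ \0 \end{pmatrix}$. Then $A\bar{x} = (B~\0)\begin{pmatrix} B^{-1}b \\ \0 \end{pmatrix} = b$, and $\bar{x}$ is $1/p^k$-integral because $B^{-1}b$ is and $U$ is integral. For (i) $\Rightarrow$ (ii), given a $1/p^k$-integral solution $\bar{x}$ of $Ax = b$, let $\bar{y} := U^{-1}\bar{x}$, which is also $1/p^k$-integral because $U^{-1}$ is integral. Then $(B~\0)\bar{y} = AU\bar{y} = A\bar{x} = b$, and because $B$ is nonsingular, the first $m$ entries of $\bar{y}$ are forced to equal $B^{-1}b$; these entries are $1/p^k$-integral, which is precisely (ii).

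For the moreover statement, the case $\kappa = 0$ is vacuous since $p^0=1$ divides everything, so I would assume $\kappa \geq 1$. Applying the equivalence just proved, $B^{-1}b$ is $1/p^\kappa$-integral but not $1/p^{\kappa-1}$-integral, so some entry $(B^{-1}b)_i$ has its denominator in lowest terms equal to $p^\kappa$. By Cramer's rule, $(B^{-1}b)_i = \det(B_i)/\det(B)$, where $B_i$ is obtained from $B$ by replacing its $i$-th column by $b$; as numerator and denominator are integers, the lowest-terms denominator of $(B^{-1}b)_i$ divides $|\det(B)|$, so $p^\kappa \mid |\det(B)|$. To close the argument I would invoke the classical fact that $|\det(B)| = \gcd(A)$, where $\gcd(A)$ denotes the gcd of the $m\times m$ subdeterminants of $A$: indeed the nonzero $m\times m$ subdeterminants of $(B~\0) = AU$ are all $\pm\det(B)$, and multiplying by the unimodular $U$ preserves the gcd of $m\times m$ subdeterminants of a matrix. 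Therefore $p^\kappa \mid \gcd(A)$.

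The main delicate point is the moreover part: one has to combine the Cramer's rule bound on the reduced denominator with the identification $|\det(B)| = \gcd(A)$ and be careful that the ``some coordinate of $B^{-1}b$ has denominator exactly $p^\kappa$'' step truly follows from the minimality of $\kappa$. Everything else is straightforward bookkeeping with the unimodular change of variables $x \mapsto U^{-1}x$.
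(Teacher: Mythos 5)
Your proof is correct and follows essentially the same route as the paper's: both use the unimodular change of variables $x \mapsto U^{-1}x$ to reduce (i)$\Leftrightarrow$(ii) to the observation that integral unimodular matrices preserve $1/p^k$-integrality, and both obtain the ``moreover'' part from the Cramer's-rule fact that $B^{-1}b$ is $1/\det(B)$-integral together with $|\det(B)|=\gcd(A)$. Your write-up merely spells out a couple of steps (the exact-denominator argument from minimality of $\kappa$, and the invariance of the gcd of order-$m$ minors under unimodular column operations) that the paper delegates to its appendix.
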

\begin{proof}
Let $U$ be a unimodular matrix such that $AU=(B~\0)$ is in Hermite normal form, where $B$ is a square matrix. Let $I,J$ be the sets of column labels of $B,\0$ in $(B~\0)$. Observe that $\{x:Ax=b\}=\{Uz : z_I=B^{-1}b, z_J \text{ free}\}$. 
Since $U$ is unimodular, $U^{-1}$ is integral, so $Uz$ is $1/p^k$-integral if and only if $z=U^{-1}(Uz)$ is $1/p^k$-integral. This implies that $Ax=b$ has a $1/p^k$-integral solution if, and only if, $B^{-1}b$ is $1/p^k$-integral, so (i) and (ii) are equivalent.

Suppose now that $Ax=b$ has a $1/p^{\kappa}$-integral solution, and $\kappa$ is the smallest such integer. If $\kappa=0$, then $p^{\kappa}=1$, so we are done. Otherwise, $\kappa\geq 1$. It follows from our choice of $\kappa$ that $B^{-1}b$ is not $1/p^{\kappa-1}$-integral but it is $1/p^{\kappa}$-integral. On the other hand, by Cramer's rule, $B^{-1}b$ is $1/\det(B)$-integral, so $p^{\kappa} \mid \det(B)$. Since $\det(B)=\gcd(A)$, it follows that $p^{\kappa}$ divides $\gcd(A)$, as required. (For more on $\gcd(A)$, see \Cref{sec:primer-lattices}.)
\end{proof}

Given the system $Ax=b$ above, we can use the algorithm of Kannan and Bachem~\cite{Kannan79} to compute the Hermite normal form in strongly polynomial time. The state-of-the-art can be found in Storjohann's PhD thesis, giving an algorithm with running time complexity $nm^\omega$, where $\omega\in (2,2.376)$ is the matrix multiplication exponent, and the absolute value of each entry of $B,U$ is bounded above by $m (\sqrt{m}\|A\|_\infty)^{2m}$~(\cite{storjohannPhD2000}, Chapter 6). 
Either of these leads to a strongly polynomial time algorithm for finding $\kappa$, or certifying that it does not exist. 

Moving forward, for $A\in \Z^{m\times n}$, define 
\begin{align*}
\xi_p(A,b)&:=\min
\{k\in \Z_+: Ax\leq b \text{ has a $1/p^k$-integral solution}\} \qquad \forall~b\in \Z^m\\
\xi_p(A)&:= \max\left\{\xi_p(A,b):b\in \Z^m \text{ s.t. } Ax\leq b \text{ has a $p$-adic solution}\right\}.
\end{align*}
Thus, for any $b\in \Z^m$ such that $Ax\leq b$ has a $p$-adic solution, there is a $1/p^k$-integral solution for $k=\xi_p(A)$.
In what follows, we provide an upper bound on $\xi_p(A)$ that depends only polynomially on $n$ and the encoding size of $A$.

\begin{theorem}\label{xi-upper-bound}
$\xi_p(A)\leq \left\lceil\log_p n+ (2n+1)\log_p(\|A\|_\infty\sqrt{n+1})\right\rceil$ for all $A\in \Z^{m\times n}$. 
\end{theorem}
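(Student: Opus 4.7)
The plan is to fix any $p$-adic feasible $\bar{x}$, reduce to the face of $P=\{x:Ax\leq b\}$ on which $\bar{x}$ lies in the relative interior, produce a small-denominator $p$-adic point $z$ of the affine hull of that face via \Cref{k-tilde}, and then ``nudge'' $z$ into the face itself using \Cref{find-xip}(b) with $q=\infty$. The denominator exponent of the resulting point is $\max(\kappa,r)$, where $\kappa$ bounds the denominator of $z$ and $r$ bounds the denominator of the nudge.

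Let $b\in\Z^m$ be such that $Ax\leq b$ has a $p$-adic solution $\bar{x}$. Set $I^=:=\{i:\row_i(A)\bar{x}=b_i\}$ and split $Ax\leq b$ into $A^=x=b^=$ (tight at $\bar{x}$) and $A^<x\leq b^<$ (strict at $\bar{x}$); after deleting redundant equalities I may assume $A^=$ has full row rank $m^\ast\leq n$. Then $F:=\{x:A^=x=b^=,\ A^<x\leq b^<\}$ has $\bar{x}\in\mathrm{relint}(F)$ and $\aff(F)=\{x:A^=x=b^=\}$. By \Cref{k-tilde}, the system $A^=x=b^=$ has a $1/p^\kappa$-integral solution $z\in\aff(F)$ with $p^\kappa\mid\gcd(A^=)$, and by Hadamard's inequality
\[
\kappa\leq\log_p\gcd(A^=)\leq m^\ast\log_p(\|A\|_\infty\sqrt{m^\ast})\leq n\log_p(\|A\|_\infty\sqrt{n+1}).
\]
Using Cramer's rule along a nonsingular $m^\ast\times m^\ast$ column-submatrix of $A^=$, I also obtain a basis $d^1,\ldots,d^\ell$ of $\ker(A^=)\cap\Z^n$ (with $\ell=n-m^\ast\leq n$) whose entries are, up to sign, $m^\ast\times m^\ast$ minors of $A^=$; consequently $\|d^i\|_\infty\leq\Delta(A^=)\leq(\|A\|_\infty\sqrt{n+1})^n$.

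For the ball radius, I solve the linear program $\max\{e:A^=\zeta=b^=,\ A^<\zeta+e\mathbf{1}\leq b^<,\ 0\leq e\leq 1\}$, which is feasible ($\zeta=\bar{x},\,e=0$) and has strictly positive optimum because $\bar{x}\in\mathrm{relint}(F)$. At an optimal vertex $(z',\epsilon)$ the active rows form an $(n+1)\times(n+1)$ integer submatrix with entries bounded by $\|A\|_\infty$, so Cramer's rule gives $\epsilon\geq(\|A\|_\infty\sqrt{n+1})^{-(n+1)}$, and by construction $B_\infty(z',\epsilon)\cap\aff(F)\subseteq F$. Applying \Cref{find-xip}(b) with $q=\infty$, base point $z$, reference $z'$, and radius $\epsilon$, and choosing
\[
r:=\left\lceil\log_p\!\left(\ell\max_i\|d^i\|_\infty\big/\epsilon\right)\right\rceil\leq\left\lceil\log_p n+n\log_p(\|A\|_\infty\sqrt{n+1})+(n+1)\log_p(\|A\|_\infty\sqrt{n+1})\right\rceil,
\]
produces a $1/p^r$-integral $\bar{\rho}$ with $z+\bar{\rho}\in B_\infty(z',\epsilon)\cap\aff(F)\subseteq F\subseteq P$. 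Since $z$ is $1/p^\kappa$-integral and $r\geq\kappa$, the point $z+\bar{\rho}$ is $1/p^r$-integral, so $\xi_p(A,b)\leq r\leq\lceil\log_p n+(2n+1)\log_p(\|A\|_\infty\sqrt{n+1})\rceil$. Taking the supremum over admissible $b$ yields the theorem.

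The main obstacle is arranging the exponent to be exactly $2n+1$ rather than $2n+2$ or larger. This requires two sharp choices: using the $\ell_\infty$ ball so that the radius LP's active submatrix is $(n+1)\times(n+1)$ with entries bounded by $\|A\|_\infty$ (a Euclidean formulation would introduce factors $\gamma_i\sim\|A\|_\infty\sqrt{n}$ that inflate the Hadamard bound), and using a Cramer-style integer kernel basis whose $\ell_\infty$-norm is bounded directly by $\Delta(A^=)$ rather than by $\Delta(A^=)\sqrt{n}$ as one would get from merely scaling standard basis vectors in the free coordinates. The $\log_p n$ term then absorbs both the $\ell\leq n$ prefactor in \Cref{find-xip}(b) and the mild slack in the intermediate Hadamard estimates.
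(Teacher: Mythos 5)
Your overall strategy is the same as the paper's: reduce to the affine hull of a face, get a small-denominator point there via \Cref{k-tilde} and Hadamard, build an integral kernel basis by Cramer's rule, find an inner $\infty$-norm ball by an auxiliary LP, and round with \Cref{find-xip}(b). There is, however, one genuine gap, precisely at the step you single out as a ``sharp choice.''

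The claim ``by construction $B_\infty(z',\epsilon)\cap\aff(F)\subseteq F$'' is false for the LP you solve. The constraint $A^<\zeta+e\1\leq b^<$ only guarantees that every inequality in $I^<$ has \emph{slack} at least $\epsilon$ at $z'$. But for $x$ with $\|x-z'\|_\infty\leq\epsilon$ one has $\row_i(A^<)(x-z')\leq\|\row_i(A^<)\|_1\,\epsilon$, which can exceed $\epsilon$ whenever a row of $A^<$ has $1$-norm larger than $1$. The correct statement is $B_\infty\bigl(z',\epsilon/\max_i\|\row_i(A^<)\|_1\bigr)\cap\aff(F)\subseteq F$, and this is exactly why the paper's Claim~4 divides by $\max_i(|A^<|\1)_i$. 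Feeding the corrected radius into \eqref{choice-r} inflates your exponent by an additive $\log_p\bigl(\max_i\|\row_i(A^<)\|_1\bigr)\leq\log_p(n\|A\|_\infty)$, so your final arithmetic no longer lands on $\left\lceil\log_p n+(2n+1)\log_p(\|A\|_\infty\sqrt{n+1})\right\rceil$ without recovering slack elsewhere (e.g., the Hadamard bound on the $(n+1)\times(n+1)$ active submatrix has one column consisting of $0,\pm1$ entries, contributing only $\sqrt{n+1}$ rather than $\|A\|_\infty\sqrt{n+1}$, and $\rank{A^=}=n-\ell$ may be used in place of $n$). You need to both fix the containment and redo this bookkeeping. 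A secondary, harmless imprecision: the Cramer construction gives an integral basis of $\ker(A^=)$ as a vector space, not necessarily of the lattice $\ker(A^=)\cap\Z^n$; \Cref{find-xip}(b) only needs the former, so this does not affect the argument.
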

\begin{proof}
Let $P:=\{x:Ax\leq b\}$. We need to prove that for some $k\leq \mathrm{RHS}$, $P$ contains a $1/p^{k}$-integral point. We shall apply \Cref{find-xip}.

Denote by $A^= x\leq b^=$ the subsystem comprised of all implicit equalities of $Ax\leq b$, and by $A^< x \leq b^<$ the subsystem for the other inequalities. Let $F:=\{x:A^= x=b^=\}$, which is the affine hull of $P$. 

Recall that $B_{\infty}(x,\epsilon)\subseteq \R^n$ is the $\infty$-norm closed ball of radius $\epsilon$ centered at $x$. 
Let $\tilde{\epsilon}:=\max\big\{\epsilon:\exists~x\in P \text{ s.t. } B_{\infty}(x,\epsilon)\cap F\subseteq P\big\}$, and let $\tilde{B}$ be an $\infty$-norm closed ball of radius $\tilde{\epsilon}$ such that $\tilde{B}\cap F\subseteq P$.

Finally, let $\kappa:=\min\{k:F \text{ contains a $1/p^{k}$-integral point}\}$. Let $z$ be a $1/p^{\kappa}$-integral point in $F$. Let $\ell:=\mathrm{nullity}(A^=)$, and let $d^1,\ldots,d^\ell\in \Z^n$ be a basis for $\ker(A^=)$. Note that $F = z + \spn\{d^1,\ldots,d^\ell\}$. 

\begin{claim} %claim 1
$P$ contains a $1/p^k$-integral point for all $k\geq \max\left\{\kappa,\left\lceil\log_{p} \left(\frac{\ell\max\{\|d^1\|_{\infty},\ldots,\|d^\ell\|_{\infty}\}}{\tilde{\epsilon}}\right)\right\rceil\right\}$.
\end{claim}
\begin{cproof}
Since $k\geq \log_{p} \left(\frac{\ell\max\{\|d^1\|_{\infty},\ldots,\|d^\ell\|_{\infty}\}}{\tilde{\epsilon}}\right)$, there exists a $1/p^k$-integral $\bar{\rho}\in \mathbb{R}^n$ such $z+\bar{\rho}\in P$, by \Cref{find-xip}. Moreover, since $k\geq \kappa$ and $z$ is $1/p^{\kappa}$-integral, it follows that $z+\bar{\rho}$ is $1/p^k$-integral; $z+\bar{\rho}$ is the desired point (which in fact belongs to $\tilde{B}$).
\end{cproof}

It therefore suffices to upper bound $\kappa$, $\log_p(\max_{i}\|d^i\|_{\infty})$ and $-\log_p(\tilde{\epsilon})$.

\begin{claim} %claim 2
$p^{\kappa}$ divides $\gcd(A^=)$. In particular, $\kappa\leq (n-\ell)\log_{p}(\|A^=\|_{\infty}\sqrt{n-\ell})$.
\end{claim}
\begin{cproof}
Note that $\rank{A^=}=n-\ell$.
Let $A'x=b'$ be a system of $n-\ell$ linearly independent constraints of $A^=x=b^=$. Observe that $\{x:A'x=b'\}=\{x:A^= x=b^=\}$. It follows from \Cref{k-tilde} that $p^{\kappa}$ divides $\gcd(A')$ which in turn divides every subdeterminant of $A'$ of order $n-\ell$. By repeating this argument for every choice of $A'$, we obtain that $p^{\kappa}$ divides every subdeterminant of $A^=$ of order $n-\ell$, so $p^{\kappa}$ divides $\gcd(A^=)$. 
By Hadamard's inequality (see \Cref{sec:primer-lattices} for more), $\gcd(A^=)\leq (\|A^=\|_{\infty}\sqrt{n-\ell})^{n-\ell}$, so $\kappa\leq (n-\ell)\log_{p}(\|A^=\|_{\infty}\sqrt{n-\ell})$.
\end{cproof}

\begin{claim} %claim 3
We may choose $d^1,\ldots,d^\ell$ such that $\log_p(\max_i\|d^i\|_{\infty})\leq (n-\ell)\log_{p}(\|A^=\|_{\infty}\sqrt{n-\ell})$.
\end{claim}
\begin{cproof}
Let $A'$ be a row submatrix of $A^=$ comprised of $n-\ell$ linearly independent rows. Observe that $A',A^=$ have the same kernel. Let $A''$ be a square nonsingular column submatrix of $A'$. Denote by $I$ and $J$ the sets of column labels of $A'$ inside and outside $A''$, respectively. Note that $|J|=\ell$. For each $j\in J$, the $j\textsuperscript{th}$ column of $A'$ can be expressed as a unique linear combination of the columns of $A''$. Subsequently one obtains $\ell$ vectors in $\ker{A'}$, say $c^j\in \mathbb{R}^n, j\in J$, where each $c^j$ has only one nonzero entry in $J$, namely in the $j\textsuperscript{th}$ position. In particular, $c^j,j\in J$ are linearly independent and therefore form a basis of $\ker(A')$. By applying Cramer's rule, we see that for all $j\in J,i\in I$, we have $c^j_i = \pm \frac{\det(B)}{\det(A'')}$ where $B$ is obtained from $A''$ by swapping out the $i\textsuperscript{th}$ column for the $j\textsuperscript{th}$ column of $A'$. The desired vectors $d^1,\ldots,d^\ell$ may be picked as $\det(A'')\cdot c^j,j\in J$. Note that $\max_i\|d^i\|_{\infty}$ is at most the absolute value of the maximum subdeterminant of $A'$ of order $n-\ell$. Thus, by Hadamard's inequality, $\max_i\|d^i\|_{\infty}\leq (\|A'\|_{\infty}\sqrt{n-\ell})^{n-\ell}$, thereby proving the claim.
\end{cproof}

\begin{claim} %claim 4
$\tilde{\epsilon}\geq \frac{1}{|M| \max_{i} (|A^<|\1)_i}$ where $M$ is a minor of $\left(\begin{smallmatrix} A^=&\0\\A^<&\1\end{smallmatrix}\right)$, and $|A^<|$ is obtained from $A^<$ by replacing every entry with its absolute value. In particular, $-\log_p(\tilde{\epsilon}) \leq (n+1)\log_p(\|A\|_{\infty}\sqrt{n+1})+\log_p(n\|A^<\|_{\infty})$.
\end{claim}
\begin{cproof}
Consider the LP $\max\{t:A^=x=b^=, A^< x+t\1\leq b^<\}$. Let $(\tilde{x},\tilde{t})$ be a basic optimal solution to the LP. Our choice of $A^<x\leq b^<$ immediately implies that $\tilde{t}>0$, and so because $A,b$ have integral entries, it follows from Cramer's rule that $\tilde{t}\geq \frac{1}{|M|}$ where $M$ is a minor of $\left(\begin{smallmatrix} A^=&\0\\A^<&\1\end{smallmatrix}\right)$. Let $\epsilon:=\frac{\tilde{t}}{\max_i (|A^<| \1)_i}$. 
We claim that $B_{\infty}(\tilde{x},\epsilon)\cap F\subseteq P$. To this end, pick a point $x\in F$ such that $\|x-\tilde{x}\|_\infty\leq \epsilon$. Then $$
A^< (x-\tilde{x})\leq |A^<(x-\tilde{x})|\leq |A^<||x-\tilde{x}|\leq \epsilon |A^<|\1\leq \tilde{t}\1\leq b^<-A^< \tilde{x},$$
so $A^<x\leq b^<$, implying that $x\in P$. Thus, $B_{\infty}(\tilde{x},\epsilon)\cap F\subseteq P$. By definition, we must have $\tilde{\epsilon}\geq \epsilon$, and since $\epsilon\geq \frac{1}{|M| \max_{i} (|A^<|\1)_i}$, we get that $\tilde{\epsilon}\geq \frac{1}{|M| \max_{i} (|A^<|\1)_i}$. The inequality $-\log_p(\tilde{\epsilon}) \leq (n+1)\log_p(\|A\|_{\infty}\sqrt{n+1})+\log_p(n\|A^<\|_{\infty})$ follows by an application of Hadamard's inequality.
\end{cproof}

Observe that our upper bounds on $\kappa$ and $\log_p(\max_i\|d^i\|_{\infty})$ are matching, thus we may ignore $\kappa$ when applying Claim~1. Putting the claims together, we see that for some integer $k\geq 0$ satisfying
$$k\leq 
\left\lceil\log_p \ell+\log_p(\max_i\|d^i\|_{\infty})-\log_p(\tilde{\epsilon})\right\rceil\leq
\left\lceil\log_p n+ (2n+1)\log_p(\|A\|_\infty\sqrt{n+1})\right\rceil,
$$
$P$ contains a $1/p^{k}$-integral point, thereby proving the theorem.
\end{proof}

\begin{CO}
Fix a matrix $A\in \Z^{m\times n}$. Suppose $p$ is a sufficiently large prime number. Then the following statements hold: \begin{enumerate}
\item[a.] For any $b\in \Z^m$, if $Ax=b$ has a $p$-adic solution, then it has an integral solution.
\item[b.] For any $b\in \Z^m$, if $Ax\leq b$ has a $p$-adic solution, then it has a $1/p$-integral solution. That is, $\lim_{p\to \infty}\xi_p(A)\leq 1$. \end{enumerate}
\end{CO}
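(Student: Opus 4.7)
The plan is to deduce both parts directly from the technical results already in hand, namely \Cref{k-tilde} for (a) and \Cref{xi-upper-bound} for (b). The unifying observation is that each of those results yields a bound depending only on $A$ (and $n$), not on $b$ or on $p$, so making $p$ large collapses the bound.

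For part (a), I would first reduce to the case where $A$ has full row rank. Since $Ax=b$ has a $p$-adic solution, it is in particular consistent over $\R$, so any linear dependence among the rows of $A$ is mirrored by the corresponding relation on the entries of $b$, and those redundant rows may be deleted without changing the solution set. With $A$ now of full row rank, let $\kappa\geq 0$ be the smallest integer for which $Ax=b$ admits a $1/p^{\kappa}$-integral solution; such $\kappa$ exists because any $p$-adic solution is $1/p^{N}$-integral for some $N$. By \Cref{k-tilde}, $p^{\kappa}$ divides $\gcd(A)$. As $\gcd(A)$ is a fixed positive integer determined entirely by $A$, choosing $p$ strictly larger than every prime factor of $\gcd(A)$ forces $\kappa=0$, so $Ax=b$ has an integral solution.

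For part (b), I would plug directly into \Cref{xi-upper-bound}:
\[
\xi_p(A) \;\leq\; \left\lceil \log_p n + (2n+1)\log_p\!\left(\|A\|_\infty \sqrt{n+1}\right)\right\rceil.
\]
With $A$ and $n$ fixed, both $\log_p n$ and $\log_p(\|A\|_\infty \sqrt{n+1})$ tend to $0$ as $p \to \infty$. Hence for all sufficiently large primes $p$ the expression inside the ceiling is less than $1$, so $\xi_p(A) \leq 1$. This is exactly the assertion $\lim_{p\to\infty} \xi_p(A) \leq 1$, and it implies the $1/p$-integrality statement for each such $p$.

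The step I would be most careful about is the reduction to full row rank in part (a), since \Cref{k-tilde} is stated only for that case; this is a routine linear-algebra observation but should be noted explicitly so that the ``sufficiently large'' threshold can be pinned down as a quantity depending only on $A$. Beyond that, both parts are essentially direct corollaries of the earlier theorems, so no further obstacle is anticipated.
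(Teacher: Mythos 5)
Your proposal is correct and follows essentially the same route as the paper: part (a) via \Cref{k-tilde} and the divisibility $p^{\kappa}\mid\gcd(A)$, and part (b) by letting $p\to\infty$ in the bound of \Cref{xi-upper-bound}. Your explicit reduction to the full-row-rank case in (a) is a sensible extra precaution that the paper's proof leaves implicit (and is needed for $\gcd(A)$ to be a positive integer), and since the choice of a maximal linearly independent row set depends only on $A$, the threshold on $p$ indeed depends only on $A$ as required.
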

\begin{proof}
{\bf (a)} Suppose $p$ is a prime number such that $p>\gcd(A)$, and that for some $b\in \Z^m$, $Ax=b$ has a $p$-adic solution. Let $\kappa$ be the smallest integer $k\geq 0$ such that $Ax=b$ has a $1/p^{\kappa}$-integral solution. It then follows from \Cref{k-tilde} that $p^{\kappa}\mid \gcd(A)$. Since $p>\gcd(A)$, it follows that $\kappa=0$. Thus, $Ax=b$ has an integral solution.

{\bf (b)} 
We know from \Cref{xi-upper-bound} that $\xi_p(A)\leq
\left\lceil\log_p n+ (2n+1)\log_p(\|A\|_\infty\sqrt{n+1})\right\rceil$. Given a fixed $A$, the RHS approaches $1$ as $p$ tends to $\infty$, implying in turn that $\lim_{p\to \infty}\xi_p(A)\leq 1$, as required.
\end{proof}

Another consequence of \Cref{xi-upper-bound} is the following, bringing this section to an end.

\begin{CO}
Let $A\in \Z^{m\times n}$. Given a feasible dyadic linear program $Ax\leq b, x \text{ dyadic}$, for some $b\in \Z^m$, there exists a $\frac{1}{2^k}$-integral solution where $k\leq \left\lceil\log_2 n+ (2n+1)\log_2(\|A\|_\infty\sqrt{n+1})\right\rceil$.\qed
\end{CO}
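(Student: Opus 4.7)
The plan is to observe that this corollary is an immediate specialization of \Cref{xi-upper-bound} to the case $p=2$, together with the identification of dyadic numbers with $2$-adic numbers established in \Cref{sec-subgroup}.

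First, I would recall that by the discussion following \Cref{L-construction}, the set of $2$-adic numbers coincides exactly with the set of dyadic numbers. Hence the hypothesis that the dyadic linear program $Ax \leq b,\; x \text{ dyadic}$ is feasible is equivalent to saying that $Ax \leq b$ has a $2$-adic solution. In the notation introduced just before \Cref{xi-upper-bound}, this is precisely the statement that $b$ lies in the set over which the maximum defining $\xi_2(A)$ is taken.

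Next, I would invoke \Cref{xi-upper-bound} with $p = 2$, which yields
\[
\xi_2(A) \;\leq\; \left\lceil \log_2 n + (2n+1)\log_2\!\left(\|A\|_\infty \sqrt{n+1}\right) \right\rceil.
\]
By definition of $\xi_2(A,b)$ and $\xi_2(A)$, this guarantees the existence of an integer $k$ with $k \leq \xi_2(A)$, bounded by the above expression, such that $Ax \leq b$ admits a $\tfrac{1}{2^k}$-integral solution. This is exactly the conclusion of the corollary.

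Since no new argument beyond quoting \Cref{xi-upper-bound} and unwinding definitions is needed, there is no real obstacle here; the only thing to be careful about is making the correspondence between the dyadic terminology used in the corollary statement and the $p$-adic terminology used in \Cref{xi-upper-bound} explicit, so that the reader sees immediately that specialization at $p=2$ suffices.
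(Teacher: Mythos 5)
Your proposal is correct and matches the paper's intent exactly: the corollary is stated with no proof (just \qed) precisely because it is the specialization of \Cref{xi-upper-bound} to $p=2$, combined with the identification of dyadic and $2$-adic numbers and the definition of $\xi_2(A)$. Nothing further is needed.
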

% !TEX root = main.tex
%last revised Sep 6, 2023 by Ahmad
\section{Bounding the support size of dyadic solutions} \label{section:support}

Take $A\in \mathbb{Z}^{m\times n}$ and $b\in \mathbb{Z}^m$. 
Given a dyadic linear program of the form $Ax=b,x\geq \0 \text{ and dyadic}$, that is feasible, does there exist a solution with a small number of nonzero entries? We have placed dyadic linear program as a problem on the spectrum between LP and ILP, so let us give a brief overview of the state-of-the-art for the two extremes.

For the case of real solutions to $Ax=b,x\geq \0$, we have the upper bound of $m$ on the support size of a solution, given by Carath\'{e}odory's famous theorem. In fact, the same guarantee holds for an optimal solution to the standard equality form LP $\min\{w^\top x:Ax=b,x\geq \0\}$, for any $w\in \mathbb{R}^n$ for which there is a finite optimum.

For the case of integral solutions to $Ax=b,x\geq \0$, a ``Carath\'{e}odory-type" upper bound of $2m\log_2(4m\|A\|_\infty)$ on the size of the support was first established in \cite{Eisenbrand06} by making an elegant use of the \emph{Pigeonhole Principle}. This bound was later improved to $2m\log_2(2\sqrt{m}\|A\|_\infty)$ in \cite{Aliev17} by the use of \emph{Siegel's Lemma}, a tool from the Geometry of Numbers, which we shall later see and prove in this section. In fact, the latter also obtained the same guarantee for an optimal solution to $\min\{w^\top x: Ax=b, x\geq \0 \text{ and integral}\}$, for any $w\in \mathbb{R}^n$ for which there is a finite optimum.

A natural first step for studying both extremes, as well as dyadic linear program, is to obtain guarantees for the system of linear equations $Ax=b$, with the appropriate restriction on the domain of $x$. Observe that the guarantees above carry over in a black-box fashion to this setting, by simply transforming $Ax=b$ to $[A~-A]\big(\begin{smallmatrix}y\\ z\end{smallmatrix}\big) = b, \big(\begin{smallmatrix}y\\ z\end{smallmatrix}\big)\geq \0$ with the substitution formula $x=y-z$.

With this context in mind, let us now move on to the spectrum between LP and ILP. In the previous section, for the sake of transparency but also generality, we provided guarantees not just for dyadic linear programs but more generally $p$-adic linear programs, for any prime $p\geq 2$. In this section, for the same reasons, we will provide guarantees for a different extension of dyadic linear programs, as explained below.

For every integer $k\geq 1$, denote by $p_k$ the $k\textsuperscript{th}$ prime number. Recall from \Cref{section:foundation} that a rational number is $[p_k]$-adic if it is of the form $\frac{a}{b}$ where $a\in \Z$ and $b$ is a product of primes in $[p_k]$. Note that a rational number is $[2]$-adic if and only if it is dyadic. Let $p_0:=1$. We also deal with \emph{$[p_0]$-adic numbers}, which are precisely the integers.\footnote{Note that the $[p_0]$-adic numbers are discrete as opposed to the $[p_k]$-adic numbers for $k\geq 1$ which form a dense set. However, discreteness will be irrelevant in this section.} The guarantees we provide in this section will apply more generally to optimal solutions to the following $[p_k]$-adic linear program, for any integer $k\geq 0$: \begin{align}
\min\{w^\top x: Ax&=b,x\geq \0 \text{ and $[p_k]$-adic}\}. \label{2-sparsity-LS-2}
\end{align} Observe that for $k=0$ we recover integer linear programming, for $k=1$ dyadic linear programming, and for $k=\infty$ linear programming. Our bounds are given exactly but indirectly in terms of proxy functions, and also loosely but directly as a function of $p_k$, $m$, and the maximum absolute value $\|A\|_{\infty}$ of an entry of $A$; none of the bounds however depend on $n$.
Along the way, we shall also provide improved and tight guarantees for solutions to 
\begin{align}
Ax&=b, x \text{ $[p_k]$-adic.} \label{2-sparsity-LS-1}
\end{align}

\subsection{Examples where every solution has full support}

Let us present instances of \eqref{2-sparsity-LS-2} and \eqref{2-sparsity-LS-1} where every solution has full support. The example below is inspired by \cite{Cook86}.

\begin{EG}\label{full-support-eg-1}
Let $q_1,\ldots,q_{n}$ be distinct primes such that $q_i\geq p_{k+1}$ for all $i\in [n]$. Let $Q:=q_1q_2\cdots q_{n}$ and $A:=\left(\frac{Q}{q_1}~\frac{Q}{q_2}~\cdots~\frac{Q}{q_{n}}\right) \in \mathbb{R}^{1\times n}$. Then the system $Ax = 1, x \text{ $[p_k]$-adic}$ is feasible, and every solution has full support.

One can extend this example to one with an arbitrary number $m$ of rows, by replacing $Ax=1$ by $(A\otimes I) y = \1$, where $\otimes$ denotes the Kronecker product, $I$ the $m$-by-$m$ identity matrix, and $\1$ the all-ones $m$-dimensional column vector.
\end{EG}
\begin{proof}
We only prove the first statement, and leave the easy verification of the second statement to the reader. Since the entries of $A$ have GCD $1$, it follows from B\'{e}zout's Lemma that $Ax=1$ has an integral, thus $[p_k]$-adic solution. This proves feasibility of the system $Ax=1, x \text{ $[p_k]$-adic}$. Now let $\bar{x}$ be a feasible solution. Suppose for a contradiction that $\bar{x}$ does not have full support, say $\bar{x}_n=0$. For each $i\in [n-1]$, write $\bar{x}_i=\frac{a_i}{b_i}$ where $a_i,b_i\in \Z$ and $b_i$ is a product of primes in $[p_k]$. Let $B$ be the largest common multiple of $b_i,i\in [n-1]$, which is also a product of primes in $[p_k]$. Then we have the identity $$
\sum_{i=1}^{n-1} A_{1i} \frac{a_iB}{b_i} = B.
$$ By construction, the GCD of $A_{1i},i\in [n-1]$ is $q_n$.
Since each $\frac{a_iB}{b_i},i\in [n-1]$ is an integer, it therefore follows from the identity above that $q_n\mid B$, a contradiction since $q_n$ is a prime outside $[p_k]$ while $B$ is a product of primes in $[p_k]$.
\end{proof}

This example can naively be extended to the inequality case.

\begin{EG}\label{full-support-eg-1.5}
Let $A$ be the matrix from \Cref{full-support-eg-1}. Take a resigning $A^{s}$ of $A$, obtained by negating some entries of $A$, such that $A^sx = 1, x\geq \0$ has an integral solution. It can be readily checked that $A^sx = 1, x\geq \0, x \text{ $[p_k]$-adic}$, is feasible, and every solution has full support.
Once again, this example can be extended to one with an arbitrary number $m$ of rows: $(A^s\otimes I) y = \1, y\geq \0, y \text{ $[p_k]$-adic}$.
\end{EG}

For $k=0$, there is another interesting example, which also appears in \cite{Eisenbrand06}.

\begin{EG}\label{full-support-eg-2}
Let $A:=(1~2~2^2~\cdots~2^{n-1})
 \in \mathbb{R}^{1\times n}$. Consider the integer linear program $$\min\left\{\1^\top x: Ax=2^n-1, x\geq \0 \text{ and integral}\right\}.$$ Then $x=\1$ is the unique optimal solution, which in particular has full support.
As before, this example can readily be extended to one with an arbitrary number $m$ of rows: $$\min\left\{\1^\top y: (A\otimes I)y=(2^n-1)\cdot \1, y\geq \0 \text{ and integral}\right\}.$$
\end{EG}
\begin{proof}
We only verify the first statement, and leave the proof of the second statement to the reader. Clearly, $x^\star=\1$ is a feasible solution to integer linear program. Take another feasible solution $\bar{x}$. Let $i\in [n]$ be the largest index such that $\bar{x}_i>1$. Since $\bar{x}$ is integral and nonnegative, and $A\bar{x}=2^n-1$, it follows that $i<n$. We now change $\bar{x}$ by updating $\bar{x}_i:=\bar{x}_i-2$ and $\bar{x}_{i+1}:=\bar{x}_{i+1}+1$; note that this change reduces $\1^\top \bar{x}$ by $1$. By repeatedly applying this operation, we obtain the solution $x^\star=\1$ to the integer linear program. This procedure proves in particular that $x^\star=\1$ is the unique optimal solution to the integer linear program.
\end{proof}

\subsection{From full support solutions to large prime factors}

In this section, we pave the way for obtaining an upper bound on the support size for \eqref{2-sparsity-LS-2}, and an even better bound for \eqref{2-sparsity-LS-1}. The gap between these two bounds is due to the following lemma.

\begin{LE}\label{full-support-2}
Let $A\in \mathbb{Z}^{m\times n},b\in \mathbb{Z}^m,w\in \mathbb{R}^n$, and take an integer $k\geq 0$. Then the following statements hold: \begin{enumerate}
\item Suppose $Ax=b, x \text{ $[p_k]$-adic}$, is feasible, and every solution has full support. Then for every integral solution $\bar{x}$ to $Ax=\0$, every nonzero entry has a prime factor greater than or equal to~$p_{k+1}$.
\item Suppose $\min\{w^\top x: Ax=b, x\geq \0, x \text{ $[p_k]$-adic}\}$ has an optimal solution, and every optimal solution has full support. Then for every nonzero integral solution $\bar{x}$ to $Ax=\0$, there exists some nonzero entry with a prime factor greater than or equal to $p_{k+1}$.
\end{enumerate}
\end{LE}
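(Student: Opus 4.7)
Both parts follow the same strategy: assume for contradiction that some nonzero entry of $\bar{x}$ (part~1) or every nonzero entry of $\bar{x}$ (part~2) has all its prime factors in $[p_k]$, and use $\bar{x}$ to perturb $x^*$ into a $[p_k]$-adic feasible solution with a zero coordinate, contradicting the full-support hypothesis. The arithmetic engine is the elementary fact that if $a$ is a positive integer whose prime factors all lie in $[p_k]$, then $1/a$ is $[p_k]$-adic by definition; combined with closure of the $[p_k]$-adic numbers under addition and under multiplication by integers, this lets me manipulate ratios of the form $-x^*_j/\bar{x}_j$ without leaving $[p_k]$-adic territory.

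For part~1, I would let $x^*$ be any $[p_k]$-adic solution of $Ax=b$ and suppose for contradiction that some $\bar{x}_j\neq 0$ has all prime factors in $[p_k]$. Setting $t:=-x^*_j/\bar{x}_j$, the observation above yields that $t$ is $[p_k]$-adic, so $x^*+t\bar{x}$ is a $[p_k]$-adic solution of $Ax=b$ (since $A\bar{x}=\0$) whose $j$-th coordinate vanishes, contradicting the full-support hypothesis.

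Part~2 is a ratio-test variant of part~1 with two additional wrinkles: the perturbation must preserve both nonnegativity and the optimal objective value. I would let $x^*$ be an optimal solution, so $x^*>\0$ componentwise by full support, and suppose for contradiction every nonzero entry of $\bar{x}$ has all prime factors in $[p_k]$. First I would argue that $w^\top\bar{x}=0$: for some $[p_k]$-adic $t_0>0$ small enough that $x^*\pm t_0\bar{x}\geq\0$, both $x^*\pm t_0\bar{x}$ are $[p_k]$-adic feasible solutions, so $w^\top\bar{x}\neq 0$ would yield one with strictly smaller objective, contradicting optimality. Next I would take $t^*$ to be the signed real of smallest magnitude for which $x^*+t^*\bar{x}$ hits the boundary of the nonneg orthant; by the standard ratio test $t^*=\pm x^*_{i^*}/|\bar{x}_{i^*}|$ for some $i^*$ with $\bar{x}_{i^*}\neq 0$, and this $t^*$ is $[p_k]$-adic because $\bar{x}_{i^*}$'s prime factors all lie in $[p_k]$. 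Then $x^*+t^*\bar{x}$ is $[p_k]$-adic, nonneg, satisfies $A(x^*+t^*\bar{x})=b$, has objective value $w^\top x^*$ (since $w^\top\bar{x}=0$), and has a zero in coordinate $i^*$, contradicting that every optimal solution has full support.

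The main obstacle is the divisibility bookkeeping in part~2: since the ratio test does not let me choose \emph{which} coordinate hits zero first, the construction of a $[p_k]$-adic $t^*$ requires \emph{every} nonzero entry of $\bar{x}$, not just one, to have all prime factors in $[p_k]$. This is precisely why part~2's conclusion is strictly weaker than part~1's; everything else is a routine LP-style perturbation once the reduction $w^\top\bar{x}=0$ is in hand.
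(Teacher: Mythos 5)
Your proof is correct and follows essentially the same route as the paper's: the perturbation $x^*-\frac{x^*_i}{\bar{x}_i}\bar{x}$ for part~1, and for part~2 the two-step reduction (first $w^\top\bar{x}=0$ via a small two-sided $[p_k]$-adic perturbation of a full-support optimum, then a ratio test whose step length is $[p_k]$-adic because the blocking coordinate's prime factors all lie in $[p_k]$). Your "smallest magnitude in either direction" formulation of the ratio test is in fact slightly more careful about signs than the paper's $\arg\min$ over all $j$ with $\bar{x}_j\neq 0$, but the argument is the same.
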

\begin{proof}
{\bf (1)}
Let $\bar{x}$ be a nonzero integral vector such that $A\bar{x}=\0$. 
Suppose for a contradiction that for some index $i$, $\bar{x}_i\neq 0$ and every prime factor of $\bar{x}_i$ is at most $p_k$. Let $x^1$ be a $[p_k]$-adic solution to $Ax=b$. By the hypothesis, $x^1$ has full support, so $x^1_i\neq 0$. Let $x^2:=x^1 - \frac{x^1_i}{\bar{x}_i}\bar{x}$, which is another solution to $Ax=b$. Since every prime factor of $\bar{x}_i$ belongs to $\{p_1,\ldots,p_k\}$, it follows that $x^2$ is another $[p_k]$-adic solution to $Ax=b$, one whose support excludes $i$, a contradiction to our hypothesis.

{\bf (2)} is similar to (1), except that in order to ensure $x^2$ remains nonnegative given the nonnegativity of $x^1$, we would need the prime factors of every nonzero entry of $\bar{x}$ to be less than $p_{k+1}$. Let us elaborate. Let $\bar{x}$ be a nonzero integral vector such that $A\bar{x}=\0$.

First, we prove that $w^\top \bar{x}=0$. Suppose otherwise. Let $x^\star$ be an optimal solution to $\min\{w^\top x: Ax=b, x\geq \0, x \text{ is $[p_k]$-adic}\}$. By assumption, $x^\star$ has full support, implying in turn that for a sufficiently small and $[p_k]$-adic $\epsilon>0$, both $x^\star\pm \epsilon \bar{x}$ are feasible solutions to the $[p_k]$-adic linear program. However, since $w^\top \bar{x}\neq 0$, one of $x^\star\pm \epsilon \bar{x}$ would have a strictly smaller objective value than $x^\star$, thereby contradicting the optimality of $x^\star$. Thus, $w^\top \bar{x}=0$.

Secondly, we prove that some nonzero entry of $\bar{x}$ has a prime factor greater than or equal to $p_{k+1}$. Suppose otherwise. We shall use $\bar{x}$ to construct an optimal solution to $\min\{w^\top x: Ax=b, x\geq \0, x \text{ is $[p_k]$-adic}\}$ without full support, thereby contradicting the hypothesis. To this end, let $x^1$ be an optimal solution to the $[p_k]$-adic linear program. By the hypothesis, $x^1$ has full support. Choose $$i\in\arg\min_{j\in [n]}\left\{\frac{x^1_j}{\bar{x}_j}:\bar{x}_j\neq 0\right\}.$$ Let $x^2:=x^1 - \frac{x^1_i}{\bar{x}_i}\bar{x}$. Our choice of $i$ ensures that $x^2$ is defined and nonnegative. Since every prime factor of $\bar{x}_i$ belongs to $\{p_1,\ldots,p_k\}$ by the contrary assumption, $x^2$ is $[p_k]$-adic. Thus, since $w^\top \bar{x}=0$, it follows that $x^2$ is another optimal solution to $\min\{w^\top x: Ax=b, x\geq \0, x \text{ is $[p_k]$-adic}\}$, one whose support excludes $i$, a contradiction to our hypothesis.
\end{proof}

\subsection{Large prime factors: examples and analysis}

It may not be clear how to find examples that satisfies the conclusions of \Cref{full-support-2} parts~(1) and~(2). Let us present two examples, both of which we will prove to be extremal in a sense. Throughout the subsection we assume that $k\in \Z_{\geq 0}$.

\begin{EG}\label{large-primes-eg-1}
Let $q_1,\ldots,q_{n}$ be distinct primes such that $q_i\geq p_{k+1}$ for all $i\in [n]$. Let $Q:=q_1q_2\cdots q_{n}$ and $A:=\left(\frac{Q}{q_1}~\frac{Q}{q_2}~\cdots~\frac{Q}{q_{n}}\right) \in \mathbb{R}^{1\times n}$. It can be readily checked that for any integral solution $\bar{x}$ to $A x=\0$, and for each $i\in [n]$, we have $q_i \mid \bar{x}_i$, so if $\bar{x}_i\neq 0$ then $\bar{x}_i$ has a prime factor greater than or equal to $p_{k+1}$. Thus the $1$-by-$n$ matrix $A$ satisfies the conclusion of \Cref{full-support-2}~(1).
\end{EG}

First, notice that the example above comes from \Cref{full-support-eg-1}, indicating that its essence is captured by \Cref{full-support-2}~(1). Secondly, note that in the example above, the entries of the row vector $A$ are ``large" with respect to the GCD of its entries; this normalization is necessary since scaling $A$ does not change its kernel. More precisely, the size of every entry divided by the GCD of the entries, is at least $p_{k+1}p_{k+2}\cdots p_{k+n-1}$. In \Cref{det-lower-bound} below, we prove this bound more generally for a full-row-rank matrix with $m$ rows, where the notions of the ``size of an entry" and the ``GCD of the entries" have been replaced by ``a nonzero order-$m$ minor" and the ``GCD of order-$m$ minors". The argument is inspired by a similar one in~\cite{Dubey23+}.

\begin{LE}\label{det-lower-bound}
Let $A$ be an $m$-by-$n$ integral matrix of full row rank, where for every integral solution $\bar{x}$ to $Ax=\0$, if $\bar{x}_i\neq 0$ then $x_i$ has a prime factor greater than or equal to $p_{k+1}$. Then for every subset $I\subseteq [n]$ of size $m$ such that $\det(A_{[m]\times I})\neq 0$ we have
$$
\frac{|\det(A_{[m]\times I})|}{\gcd(A)}\geq \left\{
\begin{array}{ll}
p_{k+1}^{n-m} &\text{ if $n< 2m$}\\
p_{k+1}^m p_{k+2}^m \cdots p_{k+\lfloor \frac{n}{m}\rfloor -1}^m p_{k+\lfloor \frac{n}{m}\rfloor}^{n-m\lfloor \frac{n}{m}\rfloor} 
&\text{ otherwise.}
\end{array}
\right.
$$ Moreover, there is a subset $J\subseteq [n]$ of size $m$ such that $\det(A_{[m]\times J})\neq 0$, and
$$
\frac{|\det(A_{[m]\times J})|}{\gcd(A)}\geq \left\{
\begin{array}{ll}
p_{k+2}^{n-m} &\text{ if $n< 2m$}\\
p_{k+2}^m p_{k+3}^m \cdots p_{k+\lfloor \frac{n}{m}\rfloor}^m p_{k+\lfloor \frac{n}{m}\rfloor+1}^{n-m\lfloor \frac{n}{m}\rfloor} 
&\text{ otherwise.}
\end{array}
\right.
$$
\end{LE}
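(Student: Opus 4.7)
My plan is to reduce the bound on $|\det A_{[m]\times I}|/\gcd(A)$ to a bound on $|\det B_J|$, where $J := [n]\setminus I$ has size $r := n-m$ and $B \in \Z^{n\times r}$ is any $\Z$-basis of the kernel lattice $\Lambda := \ker(A)\cap \Z^n$. Note that $\Lambda$ is saturated in $\Z^n$ of rank $r$ and, by hypothesis, every nonzero entry of every vector in $\Lambda$ has a prime factor $\geq p_{k+1}$. The standard complementary-minors identity gives $|\det A_{[m]\times I}| = |\det B_J|\cdot \gcd(A)$ for any such basis. Using right-multiplication by a unimodular matrix, I would put $B_J$ in lower-triangular Hermite normal form with positive diagonal $d_1,\ldots,d_r$, so $|\det B_J|=\prod_i d_i$; the columns of the modified $B$ still form a basis of $\Lambda$, hence still satisfy the hypothesis. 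In particular, each $d_i$ is a nonzero entry of column $i$, so it has a prime factor $\geq p_{k+1}$ and $d_i \geq p_{k+1}$. This already handles the case $n<2m$ (equivalently $r\leq m$), giving $\prod d_i \geq p_{k+1}^r$.

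The heart of the proof, for $n\geq 2m$, is to show that after sorting $d_1\leq \cdots \leq d_r$ one has $d_i \geq p_{k+\lceil i/m\rceil}$; the product of these lower bounds is exactly $p_{k+1}^m p_{k+2}^m\cdots p_{k+q-1}^m p_{k+q}^{s}$ with $q=\lfloor n/m\rfloor$, $s=n-mq$. I would deduce this from the following claim, for each $j\geq 1$: at most $jm$ of the $d_i$'s are $\leq p_{k+j}$. The base case $j=1$ is where saturation of $\Lambda$ enters decisively. Set $T := \{i : d_i = p_{k+1}\}$ and suppose for contradiction $|T|\geq m+1$. For any $i'\in T$, the row $i'$ of $B_J$ has above-diagonal entries zero by HNF, and each below-diagonal entry $(B_J)_{i',i}$ (with $i<i'$) lies in $[0,d_{i'})=[0,p_{k+1})$ in a column of $B$ whose nonzero entries must have a prime factor $\geq p_{k+1}$, forcing it to be zero; so row $i'$ equals $p_{k+1}\cdot e_{i'}$. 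Reducing modulo $p_{k+1}$, saturation gives that $\Lambda/p_{k+1}\Lambda$ embeds in $\mathbb{F}_{p_{k+1}}^n$ with dimension $r$; but the image must lie in the subspace annihilating the $T$-coordinates of $J$, of dimension $n-|T|$. Hence $r \leq n-|T|$, i.e., $|T|\leq m$, a contradiction.

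For $j\geq 2$ I would iterate the same projection-plus-saturation argument, the twist being that below-diagonal HNF entries between rows with $d_{i'}\leq p_{k+j}$ only need to be multiples of $p_{k+1}$ (rather than zero), so the mod-$p_{k+1}$ dimension count must be refined. I expect the cleanest route is an inductive ``peel-off'': after handling the $m$ indices with $d_i = p_{k+1}$, I would pass to the complementary rank-$(r-m)$ saturated sublattice obtained by projecting away the $T$-coordinates (which inherits the hypothesis with $k$ effectively increased by $1$), and reapply the base case with $p_{k+1}$ replaced by $p_{k+2}$, and so on. For the moreover statement, since at most $m$ of the diagonal entries can be $p_{k+1}$ regardless of $J$, I would select $I$ to cover exactly those $m$ coordinates, so that the new $J$ forces all diagonal entries to be $\geq p_{k+2}$; the same chunking yields the shifted bound starting at $p_{k+2}$. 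The main obstacle is this $j\geq 2$ iteration: the HNF below-diagonal structure does not automatically cooperate with larger primes, and care is needed to maintain saturation when passing to quotient lattices and to verify that the projected sublattice really inherits the stronger ``prime factor $\geq p_{k+j}$'' hypothesis.
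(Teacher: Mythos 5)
Your opening reduction (the complementary-minors identity $|\det A_{[m]\times I}| = \gcd(A)\cdot|\det B_J|$ for a basis $B$ of the saturated kernel lattice) coincides with the paper's first step, and your base case is sound: each HNF pivot $d_i$ is a nonzero entry of a kernel vector, hence $d_i\geq p_{k+1}$, which settles $n<2m$; and your saturation argument that at most $m$ of the $d_i$ can equal $p_{k+1}$ is correct. But the route you sketch for $n\geq 2m$ has two genuine gaps. First, even granting the counting claim ``at most $jm$ of the $d_i$ are $\leq p_{k+j}$,'' the deduction $d_i\geq p_{k+\lceil i/m\rceil}$ fails beyond the first level: an admissible pivot value need only have \emph{some} prime factor $\geq p_{k+1}$, so values such as $p_{k+1}^2$ or $2p_{k+1}$ can lie strictly between $p_{k+j}$ and $p_{k+j+1}$ (for $k=0$, $d_i=4$ exceeds $p_2=3$ but not $p_3=5$). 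Second, the ``peel-off'' induction asserts that projecting away the $T$-coordinates yields a sublattice inheriting the hypothesis with $k$ replaced by $k+1$; there is no reason for this --- the surviving coordinates of kernel vectors are still only guaranteed a prime factor $\geq p_{k+1}$, not $\geq p_{k+2}$. The ``moreover'' selection is also circular as written, since ``the $m$ coordinates where $d_i=p_{k+1}$'' depend on the very $J$ you are trying to choose.

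The paper avoids all of this by never bounding individual pivots. It shows, via B\'ezout applied to the GCD of a row, that every \emph{row} of the kernel basis matrix $U_2$ is divisible by some prime $\geq p_{k+1}$, and, via $\gcd(U_2)=1$ (essentially your saturation argument), that each prime divides at most $m$ rows. Multilinearity of the determinant then makes the whole $(n-m)\times(n-m)$ minor divisible by the product of one such prime per row, and the worst-case assignment of primes to rows gives exactly the stated bound; for the ``moreover'' part one takes $\overline{J}$ to be the row set of a maximal minor of $U_2$ not divisible by $p_{k+1}$, which exists because the maximal minors of $U_2$ are coprime. If you want to salvage your HNF approach, the fix is to replace statements about the sizes of the $d_i$ by divisibility statements about entire rows of $B$, and to argue about what divides $\prod_i d_i=|\det B_J|$ rather than about how large each individual $d_i$ is.
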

\begin{proof}
Let $U$ be a unimodular matrix such that $AU=(B~\0)$, where $B$ is an $m\times m$ matrix. (For instance, $U$ can be the unimodular matrix that brings $A$ into Hermite normal form after some elementary unimodular column operations.) We know that $\gcd(A) = |\det(B)|$. Let $(U_1~U_2)$ be the partition of $U$ into two column submatrices such that $AU_1=B$ and $AU_2=\0$. Observe that the $n-m$ columns of $U_2$ form a basis for $\ker(A)$. In fact, since $U$ is a unimodular matrix, the columns of $U_2$ form a basis for the lattice $L:=\ker(A)\cap \mathbb{Z}^n$, that is, $\{U_2y:y\in \mathbb{R}^{n-m}\}\cap \mathbb{Z}^n=\{U_2 y:y\in \mathbb{Z}^{n-m}\}$. Given that $L=\overline{L}$, it follows that $\gcd(U_2)=[\overline{L}:L]=1$ (see \Cref{sec:primer-lattices} for more).

Given a prime number $p$, we say that \emph{$p$ divides a row of $U_2$} if it divides every entry of the row.

\begin{claim} %claim 1
Every row of $U_2$ is divisible by a prime greater than or equal to $p_{k+1}$.
\end{claim}
\begin{cproof}
Let $g\in \mathbb{Z}_{\geq 1}$ be the GCD of the entries of a nonzero row of $U_2$, say row $i$. Then there exists a $y\in \mathbb{Z}^{n-m}$ such that $(U_2 y)_i = g$. Since $x:=U_2 y\in L$, our key assumption implies that every nonzero entry of $x$, and in particular $x_i=g$, has a prime factor greater than or equal to $p_{k+1}$, as claimed.
\end{cproof}

\begin{claim} %claim 2
Every prime number $p$ divides at most $m$ rows of $U_2$.
\end{claim}
\begin{cproof}
Suppose otherwise, that is, for some prime number $p$, there is a subset $J\subseteq [n]$ of row indices of $U_2$ such that each row of $U_2$ with an index in $J$ is divisible by $p$, and $|J|\geq m+1$. Every $(n-m)$-by-$(n-m)$ submatrix of $U_2$ contains a row index from $J$, therefore its determinant is divisible by $p$. This implies that $p \mid \gcd(U_2)$, which is a contradiction since $\gcd(U_2)=1$.
\end{cproof}

Denote by $I_1,I_2$ the set of column labels of $U_1,U_2$, respectively. Then $|I_1|=m$ and $|I_2|=n-m$.

\begin{claim} %claim 3
For every subset $I\subseteq [n]$ of size $m$ such that $\det(U_{\overline{I}\times I_2})\neq 0$, we have
$$
|\det(U_{\overline{I}\times I_2})|\geq 
\left\{
\begin{array}{ll}
p_{k+1}^{n-m} &\text{ if $n< 2m$}\\
p_{k+1}^m p_{k+2}^m \cdots p_{k+\lfloor \frac{n}{m}\rfloor -1}^m p_{k+\lfloor \frac{n}{m}\rfloor}^{n-m\lfloor \frac{n}{m}\rfloor} 
&\text{ otherwise.}
\end{array}
\right.
$$ Moreover, there is a subset $J\subseteq [n]$ of size $m$ such that $\det(U_{\overline{J}\times I_2})\neq 0$, and
$$
|\det(U_{\overline{J}\times I_2})|\geq 
\left\{
\begin{array}{ll}
p_{k+2}^{n-m} &\text{ if $n< 2m$}\\
p_{k+2}^m p_{k+3}^m \cdots p_{k+\lfloor \frac{n}{m}\rfloor}^m p_{k+\lfloor \frac{n}{m}\rfloor+1}^{n-m\lfloor \frac{n}{m}\rfloor} 
&\text{ otherwise.}
\end{array}
\right.
$$
\end{claim}
\begin{cproof}
Take a subset $I\subseteq [n]$ of size $m$ such that $\det(U_{\overline{I}\times I_2})\neq 0$.
By Claim~1, every row of $U_{\overline{I}\times I_2}$ is divisible by a prime in $P:=\{p_{k+1},p_{k+2},\ldots\}$. On the other hand, by Claim~2, every prime in $P$ is divisible by at most $
\min\{|\overline{I}|,m\}=
\min\{n-m,m\}$ rows of $U_{\overline{I}\times I_2}$. These two facts immediately imply the first inequality. To get the second, stronger inequality, it suffices to choose $J:=I$ such that it includes the indices of all the rows of $U_2$ divisible by $p_{k+1}$ (of which there are at most $m$ by Claim~2), and $\det(U_{\overline{J}\times I_2})\neq 0$. This can be done by using the fact that $\gcd(U_2)=1$, so there exists a minor of $U_2$ of order $n-m$ that is not divisible by $p_{k+1}$; this will be precisely $\det(U_{\overline{J}\times I_2})$.
\end{cproof}

\begin{claim} %claim 4
For every subset $I\subseteq [n]$ of size $m$, we have $|\det(A_{[m]\times I})| = \gcd(A) \cdot |\det(U_{\overline{I}\times I_2})|$.
\end{claim}
\begin{cproof}
It follows from $A=(B~\0)U^{-1}$ that $A_{[m]\times I} = 
(B~\0) (U^{-1})_{[n]\times I}=
B\cdot (U^{-1})_{I_1\times I}$. Subsequently, $$|\det(A_{[m]\times I})| = |\det(B)|\cdot  |\det((U^{-1})_{I_1\times I})| = \gcd(A)\cdot |\det((U^{-1})_{I_1\times I})|.$$ The right-hand side term can be rewritten in terms of the adjugate $\mathrm{adj}(U)$ of $U$. Observe that $\mathrm{adj}(U) = \det(U)\cdot U^{-1}$, so $U^{-1}=\pm \mathrm{adj}(U)$ because $U$ is unimodular. Thus, 
$$
|\det((U^{-1})_{I_1\times I})| = |\det(\mathrm{adj}(U)_{I_1\times I})|= |\det(U_{\bar{I}\times I_2})|\cdot |\det(U)|^{m-1}= |\det(U_{\bar{I}\times I_2}) |
$$ where the second equality follows from Jacobi's Theorem on the adjugate matrix (see \Cref{sec:primer-lattices} for more), and the last equality from the unimodularity of $U$. Combining the two lines of equalities above yields the claim.
\end{cproof}

The lemma readily follows from Claims~3 and~4.
\end{proof}

Next, let us present an example that satisfies the conclusion of \Cref{full-support-2}~(2).

\begin{EG}\label{large-primes-eg-2}
Let $A:=\left(1~p_{k+1}~p_{k+1}^2~\cdots~p_{k+1}^{n-1}\right) \in \mathbb{R}^{1\times n}$. It can be readily checked that for any nonzero integral solution $\bar{x}$ to $A x=\0$, the smallest index $i\in [n]$ such that $\bar{x}_i\neq 0$ satisfies $p_{k+1} \mid \bar{x}_i$. Thus the $1$-by-$n$ matrix $A$ satisfies the conclusion of \Cref{full-support-2}~(2).
\end{EG}

First, notice that this example is the same as the feasible region of \Cref{full-support-eg-2} for $k=0$, indicating that in this case the essence of the example is captured by \Cref{full-support-2}~(2). Secondly, in the example above, the row vector $A$ has small entries as well as large, however its $2$-norm is ``large" relative to the GCD of its entries. More precisely, the $2$-norm of the row vector divided by the GCD of the entries, is equal to $\sqrt{\frac{p_{k+1}^{2n}-1}{p^2_{k+1}-1}}$; the latter is sandwiched between $p_{k+1}^{n-1}$ and $\sqrt{1+\frac{1}{p^2_{k+1}-1}}\cdot p_{k+1}^{n-1}$. In \Cref{siegel-LE} below, known as \emph{Siegel's Lemma}, we prove the lower bound more generally for a full-row-rank matrix with $m$ rows, where the ``$2$-norm of the row vector" is replaced by ``the $m$-dimensional volume of the parallelepiped generated by the rows of $A$", and as before, the ``GCD of the entries" is replaced by the ``GCD of order-$m$ minors". 

\begin{LE}[Siegel's Lemma, see~\cite{Bombieri83}]\label{siegel-LE}
Consider a linear system $Ax=\0$, where $A\in \mathbb{Z}^{m\times n}$ has full row rank, and let $\ell\geq 1$. Suppose for every nonzero integral solution $\bar{x}$ to $Ax=\0$, $\|\bar{x}\|_\infty\geq \ell$. Then $\sqrt{\det(AA^\top)}/\gcd(A)\geq \ell^{n-m}$.
\end{LE}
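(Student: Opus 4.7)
The plan is to view $L:=\ker(A)\cap\Z^n$ as a lattice of rank $n-m$ in $\R^n$, to compute its determinant as $\det L=\sqrt{\det(AA^\top)}/\gcd(A)$, and to invoke Minkowski's First Theorem to produce a nonzero $\bar{x}\in L$ with $\|\bar{x}\|_\infty\leq(\det L)^{1/(n-m)}$. Combined with the hypothesis $\|\bar{x}\|_\infty\geq\ell$, this gives $\ell^{n-m}\leq\sqrt{\det(AA^\top)}/\gcd(A)$ by contraposition.

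For the determinant computation, I would use the Hermite normal form to pick a unimodular $U\in\Z^{n\times n}$ with $AU=(B~\0)$, where $B\in\Z^{m\times m}$ is nonsingular and $|\det B|=\gcd(A)$. Partition $U=(U_1~U_2)$; then $AU_2=\0$ and the columns of $U_2\in\Z^{n\times(n-m)}$ form a $\Z$-basis of $L$, so $\det L=\sqrt{\det(U_2^\top U_2)}$. To evaluate this, form $M:=\left(\begin{smallmatrix}A\\U_2^\top\end{smallmatrix}\right)\in\Z^{n\times n}$. Because $AU_2=\0$, we have $MM^\top=\mathrm{diag}(AA^\top,U_2^\top U_2)$, so $(\det M)^2=\det(AA^\top)\det(U_2^\top U_2)$. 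On the other hand, $MU=\left(\begin{smallmatrix}B&\0\\U_2^\top U_1&U_2^\top U_2\end{smallmatrix}\right)$ is block upper triangular with $\det(MU)=\det(B)\det(U_2^\top U_2)$, and unimodularity of $U$ gives $(\det M)^2=(\det MU)^2$. Comparing yields $\det(U_2^\top U_2)=\det(AA^\top)/\det(B)^2$, and hence the claimed formula for $\det L$.

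For the Minkowski step, I would work intrinsically on the lattice side: parametrize $L$ via $U_2$ and consider the centrally symmetric convex body $K_s:=\{y\in\R^{n-m}:\|U_2 y\|_\infty\leq s\}\subseteq\R^{n-m}$. The map $y\mapsto U_2 y$ is a linear bijection $\R^{n-m}\to\spn(L)$ with Jacobian factor $\sqrt{\det(U_2^\top U_2)}=\det L$, so $\vol_{n-m}(K_s)=\vol_{\spn(L)}\bigl(\spn(L)\cap[-s,s]^n\bigr)/\det L$. The key input is Vaaler's cube-slicing inequality, which guarantees $\vol_{\spn(L)}\bigl(\spn(L)\cap[-s,s]^n\bigr)\geq(2s)^{n-m}$. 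Setting $s:=(\det L)^{1/(n-m)}$ yields $\vol_{n-m}(K_s)\geq 2^{n-m}$, and Minkowski's First Theorem applied to $\Z^{n-m}$ and $K_s$ produces a nonzero $y\in\Z^{n-m}\cap K_s$, giving $\bar{x}:=U_2 y\in L$ with $\|\bar{x}\|_\infty\leq s$, as required. The main obstacle is Vaaler's cube-slicing inequality itself, a nontrivial result from convex geometry that is essential because without it Minkowski's theorem in $\R^n$ would not suffice (the $\infty$-norm ball is full-dimensional while $L$ has rank $n-m<n$); I would cite it as a black box rather than reprove it.
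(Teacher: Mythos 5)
Your proof is correct, and its skeleton coincides with the paper's: both arguments reduce the statement to the kernel lattice $L^\perp=\ker(A)\cap\Z^n$, invoke Vaaler's cube-slicing theorem to lower-bound the volume of the relevant slice of the $\infty$-norm ball, and apply Minkowski's First Theorem to extract a short nonzero lattice vector. The one genuine difference is how you establish the identity $\det(L^\perp)=\sqrt{\det(AA^\top)}/\gcd(A)$. The paper gets it from general lattice facts (collected in its appendix): $\det(L^\perp)=\det(\overline{L})$ for $\overline{L}$ the saturation of $L=\{A^\top y:y\in\Z^m\}$, together with $\det(L)=[\overline{L}:L]\det(\overline{L})$ and $[\overline{L}:L]=\gcd(A^\top)$. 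You instead compute it self-containedly: take a Hermite-normal-form basis $U_2$ of $L^\perp$, form $M=\left(\begin{smallmatrix}A\\U_2^\top\end{smallmatrix}\right)$, and compare $\det(MM^\top)=\det(AA^\top)\det(U_2^\top U_2)$ with $\det(MU)^2=\det(B)^2\det(U_2^\top U_2)^2$. This is a nice, more elementary route that avoids the duality fact $\det(L^\perp)=\det(\overline{L})$ entirely; what it costs is carrying the explicit basis $U_2$ and pulling the geometry back to $\R^{n-m}$, whereas the paper works with the slice $Q$ directly in $\R^n$. Two small cosmetic points: your $MU$ is block \emph{lower} triangular as written (the determinant formula is unaffected), and since your body $K_s$ is closed and you only get $\vol(K_s)\geq 2^{n-m}$ with equality possible, you should either invoke the compact-body form of Minkowski's theorem or run the argument with $s=(\det L)^{1/(n-m)}+\epsilon$ and let $\epsilon\to 0$; the paper sidesteps this by using the open slice $\{x:\|x\|_\infty<\ell\}\cap\ker(A)$, which is lattice-free by hypothesis.
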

\begin{proof}
Consider the lattice $L:=\{A^\top y:y\in \mathbb{Z}^m\}$ and its orthogonal complement lattice $L^\perp:=\left\{x\in \Z^n:A x=\0\right\}$. By assumption, the convex set $Q:=\{x:\|x\|_{\infty}< \ell\}\cap \{x:A x=\0\}$, which is symmetric about the origin, contains no nonzero vector of the lattice~$L^\perp$. We may therefore apply Minkowski's First Theorem to upper bound the $(n-m)$-dimensional volume of $Q$ (see \Cref{sec:primer-lattices} for more). The theorem implies that $\vol_{n-m}(Q) \leq 2^{n-m} \det(L^\perp)$. 
On the one hand,  $\frac{1}{2\ell}\cdot Q$ is an $(n-m)$-dimensional affine slice of the unit hypercube $\{x:\|x\|_\infty<\frac12\}$ going through the origin, so by Vaaler~\cite{Vaaler79}, $\vol_{n-m}(\frac{1}{2\ell}Q)\geq 1$, implying in turn that $\vol_{n-m}(Q)\geq (2\ell)^{n-m}$.
On the other hand, given that $\overline{L}:=\{A^\top y:y\in \R^m\}\cap \Z^n\supseteq L$, we have
$$\det(L^\perp) = \det(\overline{L})= \frac{\det(L)}{[\overline{L}:L]}=\frac{\det(L)}{\gcd(A^{\top})}=\frac{\sqrt{\det(A A^\top)}}{\gcd(A)}$$ (see \Cref{sec:primer-lattices} for more). Putting everything together we obtain the desired inequality.
\end{proof}

\subsection{Proxy and direct upper bounds on the support size}

We obtain the following exact upper bound on the support size of solutions to $[p_k]$-adic linear systems and $[p_k]$-adic linear programs. Note that the upper bounds are provided indirectly by suitable proxy functions of the input size.

\begin{theorem}\label{sparsity-2}
Let $A\in \mathbb{Z}^{m\times n}$, $b\in \mathbb{Z}^m$, and $w\in \mathbb{R}^n$. Then the following statements hold for every integer $k\geq 0$:
\begin{enumerate}
\item If $Ax=b,x \text{ $[p_k]$-adic}$, is feasible, then it has a solution with support size at most $n'$, where 
for some full-row-rank $m'$-by-$n'$ submatrix $A'$ of $A$, we have 
$$
\Delta_{m'}(A')\geq 
\left\{
\begin{array}{ll}
p_{k+2}^{\frac{n'}{m'}-1} &\text{ if $n'< 2m'$}\\
p_{k+2} p_{k+3} \cdots p_{k+\lfloor \frac{n'}{m'}\rfloor} p_{k+\lfloor \frac{n'}{m'}\rfloor+1}^{\frac{n'}{m'}-\lfloor \frac{n'}{m'}\rfloor} 
&\text{ otherwise,}
\end{array}
\right.
$$ where $\Delta_{m'}(A')$ is the $m'\textsuperscript{th}$ root of the maximum absolute value of an order-$m'$ minor of $A'$.
\item If $\min\{w^\top x:Ax=b,x\geq \0,x \text{ $[p_k]$-adic}\}$ has an optimal solution, then it has an optimal solution with support size at most $n'$, where for some full-row-rank submatrix $A'$ of $A$ with $m'$ rows, $n'$ satisfies $$\left(\frac{\sqrt{\det(A'A'^\top)}}{\gcd(A')}\right)^{\frac{1}{m'}}\geq p_{k+1}^{\frac{n'}{m'}-1}.$$ 
\end{enumerate}
\end{theorem}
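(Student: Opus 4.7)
For both parts, the strategy is the same: take an \emph{extremal} solution (minimum support for~(1), minimum-support optimal for~(2)), restrict the matrix to the columns indexed by this support, eliminate redundant rows to produce a full-row-rank submatrix $A'$, and combine \Cref{full-support-2} with the appropriate structural lemma (\Cref{det-lower-bound} for part~1, Siegel's Lemma \Cref{siegel-LE} for part~2).

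For part~(1), let $\bar{x}$ be a $[p_k]$-adic solution to $Ax=b$ with smallest possible support $S$, and set $n' := |S|$. Restricting to the columns indexed by $S$ gives a system $A_{\cdot S} z = b$ for which $\bar{x}_S$ is a $[p_k]$-adic solution. Padding a hypothetical solution with $\mathrm{supp}(z) \subsetneq [n']$ by zeros outside $S$ would contradict the minimality of $S$; hence every $[p_k]$-adic solution to $A_{\cdot S} z = b$ has full support. Deleting linearly dependent rows yields a full-row-rank $m' \times n'$ submatrix $A'$ with the same solution set and the same integer kernel. By \Cref{full-support-2}(1) applied to $A'$, every integral $z \in \ker(A')$ has every nonzero entry divisible by a prime at least $p_{k+1}$. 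The ``moreover'' clause of \Cref{det-lower-bound}, applied to $A'$, then produces an index set $J \subseteq [n']$ of size $m'$ satisfying $|\det(A'_{[m'] \times J})| \geq \gcd(A') \cdot E \geq E$, where $E$ denotes the right-hand side in \Cref{det-lower-bound} and we used $\gcd(A') \geq 1$. Consequently $\Delta_{m'}(A')^{m'} \geq |\det(A'_{[m'] \times J})| \geq E$, and taking $m'$-th roots yields exactly the stated inequality (one checks that $E^{1/m'}$ equals the expression in the theorem in both the $n' < 2m'$ and $n' \geq 2m'$ regimes).

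For part~(2), the argument is parallel. Let $\bar{x}$ be an optimal solution to the $[p_k]$-adic LP with smallest possible support $S$, and set $n' := |S|$. The restricted program $\min\{w_S^\top z : A_{\cdot S} z = b,\ z \geq \0,\ z \text{ $[p_k]$-adic}\}$ has $\bar{x}_S$ as an optimal solution, and extending any feasible (resp.\ optimal) $z$ to $[n]$ by zeros outside $S$ preserves both feasibility and objective value. By minimality of $S$, every optimal solution to the restricted problem then has full support. Eliminating redundant rows yields a full-row-rank $m' \times n'$ submatrix $A'$ with the same integer kernel, and \Cref{full-support-2}(2) applied to $A'$ guarantees that every nonzero integral $z \in \ker(A')$ has some nonzero entry divisible by a prime at least $p_{k+1}$; in particular $\|z\|_\infty \geq p_{k+1}$. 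Invoking Siegel's Lemma (\Cref{siegel-LE}) on $A'$ with $\ell = p_{k+1}$ gives $\sqrt{\det(A' A'^\top)}/\gcd(A') \geq p_{k+1}^{n' - m'}$, and extracting $m'$-th roots yields $(\sqrt{\det(A' A'^\top)}/\gcd(A'))^{1/m'} \geq p_{k+1}^{n'/m' - 1}$, as required.

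The main obstacle is essentially bookkeeping: one must verify carefully that the minimality of $S$ in the \emph{original} problem transfers to the ``every (optimal) solution has full support'' hypothesis on the \emph{restricted} problem, and that eliminating redundant rows preserves both feasibility and the integer kernel so that the hypotheses of \Cref{full-support-2}, \Cref{det-lower-bound}, and \Cref{siegel-LE} hold verbatim on $A'$. Once this reduction is clean, the inequalities in the theorem drop out by taking $m'$-th roots of the bounds produced by \Cref{det-lower-bound} and \Cref{siegel-LE}.
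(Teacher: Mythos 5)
Your proposal is correct and follows essentially the same route as the paper: pass to a minimum-support (optimal) solution, restrict to its support and to a full-row-rank row submatrix, invoke \Cref{full-support-2} to get the divisibility property of the integer kernel, and then apply the ``moreover'' clause of \Cref{det-lower-bound} (part~1) or Siegel's Lemma \Cref{siegel-LE} with $\ell=p_{k+1}$ (part~2) before taking $m'$-th roots. Your explicit verification that minimality of the support transfers to the ``every (optimal) solution of the restricted system has full support'' hypothesis is exactly the bookkeeping the paper leaves implicit.
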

\begin{proof}
{\bf (1)} Let $x^\star$ be a solution to $Ax=b, x \text{ $[p_k]$-adic}$, with minimum support size. After moving to a submatrix of $A$, and the corresponding subvector of $b$, if necessary, we may assume that $x^\star$ has full support, and $A'=A$ has full row rank; thus $m'=m\leq n=n'$. 
By \Cref{full-support-2}~(1), for every integral solution $\bar{x}$ to $Ax=\0$, every nonzero entry of $\bar{x}$ has a prime factor greater than or equal to $p_{k+1}$. Thus, by \Cref{det-lower-bound}, $A$ has an $m$-by-$m$ submatrix $B$ such that $$|\det(B)|\geq 
\left\{
\begin{array}{ll}
p_{k+2}^{n-m} &\text{ if $n< 2m$}\\
p_{k+2}^m p_{k+3}^m \cdots p_{k+\lfloor \frac{n}{m}\rfloor}^m p_{k+\lfloor \frac{n}{m}\rfloor+1}^{n-m\lfloor \frac{n}{m}\rfloor} 
&\text{ otherwise.}
\end{array}
\right.
$$ 
Taking the $m\textsuperscript{th}$ of both sides, and taking advantage of the inequality $\Delta_m(A)\geq |\det(B)|^{1/m}$ by definition, we obtain (1).

{\bf (2)}
Let $x^\star$ be an optimal solution to $\min\{w^\top x:Ax=b,x\geq \0, x \text{ $[p_k]$-adic}\}$ with minimum support size. After moving to a submatrix of $A$, and the corresponding subvectors of $w$ and $b$, if necessary, we may assume that $x^\star$ has full support, and $A'=A$ has full row rank; thus $m'=m\leq n=n'$. By \Cref{full-support-2}~(2), for every integral solution $\bar{x}$ to $Ax=\0$, there exists a nonzero entry with a prime factor greater than or equal to $p_{k+1}$, so in particular $\|\bar{x}\|_{\infty}\geq p_{k+1}$.
It therefore follows from Siegel's Lemma that $$\frac{\sqrt{\det(AA^\top)}}{\gcd(A)}\geq p_{k+1}^{n-m},$$ proving (2). 
\end{proof}

Observe that for fixed $A$, both bounds above guarantee that $\frac{n'}{m'}\to 1$ as $p_k\to \infty$, thereby matching Carath\'{e}odory's bounds for the support size of solutions to linear systems, and optimal solutions to linear programs.

We can use the theorem above, along with Hadamard's inequality, to provide loose but direct upper bounds on the support size of solutions. 

\begin{theorem}\label{sparsity-2-asymptotics}
Let $A\in \mathbb{Z}^{m\times n}$, $b\in \mathbb{Z}^m$, and $w\in \mathbb{R}^n$. Then the following statements hold for every integer $k\geq 0$:
\begin{enumerate}
\item If $Ax=b,x \text{ $[p_k]$-adic}$, is feasible, then it has a solution with support size at most $n'$, where for $r=\lfloor\frac{n'}{m}\rfloor$, the following holds: if $r\geq 1+2e$ and $\ln(\sqrt{m}\|A\|_\infty)\geq e$, then
$$
r\leq 1+\frac{2(1+e)}{e}\frac{\ln(\sqrt{m}\|A\|_\infty)}{\ln\ln(\sqrt{m}\|A\|_\infty)}.
$$
\item If $\min\{w^\top x:Ax=b,x\geq \0,x \text{ $[p_k]$-adic}\}$ has an optimal solution, then it has an optimal solution with support size at most $n'$, where for $r=\frac{n'}{m}$, we have
$$
r\leq \left\{
\begin{array}{lll}
1+ \ln{(m\|A\|^2_\infty)}/(2\ln{p_{k+1}}-1)
 &\text{for all $k\geq 0$,}\\ \\
1+\log_{2}\|A\|_\infty
+\log_{2} (m\|A\|_\infty)\cdot \frac{1+\log_{2}\|A\|_\infty}{1+2\log_{2}\|A\|_\infty}  \qquad&\text{ if $k=0$ and $n'\geq 4\|A\|^2_\infty$.}
\end{array}
\right.
$$
\end{enumerate}
\end{theorem}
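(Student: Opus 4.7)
The plan is to derive both bounds by combining the proxy estimates of \Cref{sparsity-2} with Hadamard's inequality, which converts the subdeterminantal lower bounds there into concrete inequalities involving $m$, $\|A\|_\infty$, $r$, and $p_{k+1}$.

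For part~(1), I would apply \Cref{sparsity-2}(1) to obtain a full-row-rank $m'\times n'$ submatrix $A'$ of $A$ with $(\Delta_{m'}(A'))^{m'}\geq p_{k+2}p_{k+3}\cdots p_{k+\lfloor r\rfloor}\cdot p_{k+\lfloor r\rfloor+1}^{r-\lfloor r\rfloor}$, where $r=n'/m'$. Hadamard's inequality gives $\Delta_{m'}(A')\leq \sqrt{m'}\|A\|_\infty\leq \sqrt{m}\|A\|_\infty$. Taking natural logarithms and using the elementary bound $p_j\geq j$, so that $\sum_{j=2}^{\lfloor r\rfloor}\ln p_j\geq \ln(\lfloor r\rfloor!)$ (a Stirling-type estimate), the combined inequality becomes a lower bound of the form $(\lfloor r\rfloor-1)\ln((\lfloor r\rfloor-1)/e)\leq \ln(\sqrt{m}\|A\|_\infty)$. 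Under the assumptions $r\geq 1+2e$ (so that $(\lfloor r\rfloor-1)/e\geq 2$ and $\ln((\lfloor r\rfloor-1)/e)$ can be cleanly compared to $\ln\ln(\sqrt{m}\|A\|_\infty)$) and $\ln(\sqrt{m}\|A\|_\infty)\geq e$ (so the iterated logarithm is at least $1$), a standard rearrangement yields the stated constant $2(1+e)/e$. The bound is independent of $k$ because we may simply take the worst case $k=0$: larger $k$ only means larger primes and a stronger inequality.

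For part~(2), I would apply \Cref{sparsity-2}(2) to get a full-row-rank $m'\times n'$ submatrix $A'$ of $A$ with $\sqrt{\det(A'A'^{\top})}/\gcd(A')\geq p_{k+1}^{n'-m'}$. By Hadamard's inequality applied row-wise, $\sqrt{\det(A'A'^{\top})}\leq (\sqrt{n'}\|A\|_\infty)^{m'}$, and taking $m'$-th roots yields $p_{k+1}^{r-1}\leq \sqrt{n'}\|A\|_\infty$ where $r=n'/m'$. For the first bound, take natural logarithms, substitute $n'\leq rm$ (using $m'\leq m$), and apply the elementary inequality $\ln r\leq r-1$ to get $(r-1)\ln p_{k+1}\leq \tfrac{1}{2}(r-1)+\tfrac{1}{2}\ln m+\ln\|A\|_\infty$, which rearranges to $(r-1)(2\ln p_{k+1}-1)\leq \ln(m\|A\|_\infty^2)$ as claimed. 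Note that $2\ln p_{k+1}-1>0$ since $p_{k+1}\geq 2$.

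For the refined bound (the case $k=0$, $n'\geq 4\|A\|_\infty^2$), specialising to $p_{k+1}=2$ gives $2^{r-1}\leq \sqrt{n'}\|A\|_\infty$, i.e.\ $2(r-1-\alpha)\leq \log_2 n'$ with $\alpha:=\log_2\|A\|_\infty$. Here the wasteful step $\log_2 r\leq r-1$ is avoided; instead I would iterate the substitution $n'\leq rm$ in $2^{r-1}\leq \sqrt{n'}\|A\|_\infty$ and use the hypothesis $n'\geq 4\|A\|_\infty^2$ (equivalently $\log_2 n'\geq 2+2\alpha$) to control $\log_2 r$. Writing $\log_2 n'=\log_2 r+\log_2 m'$ and balancing the two constraints $2(r-1-\alpha)\leq \log_2 r+\log_2 m$ and $\log_2 r\geq 2+2\alpha-\log_2 m'$ with an appropriate weighted split produces the weighted constant $\tfrac{1+\alpha}{1+2\alpha}$, which interpolates between $1$ at $\alpha=0$ and $\tfrac{1}{2}$ as $\alpha\to\infty$. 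I expect the main obstacle to be this last step: extracting the precise weighted constant from the self-referential inequality requires a careful tangent-line/iteration argument, rather than a single substitution, because the naive bound $\log_2 r\leq r-1$ yields only $r\leq 1+\log_2(m\|A\|_\infty^2)$, which is strictly weaker than the claimed bound for $\alpha>0$.
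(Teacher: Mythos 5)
Your overall strategy --- feed the proxy bounds of \Cref{sparsity-2} into Hadamard's inequality and then solve the resulting transcendental inequality for $r$ --- is exactly the paper's, and your derivation of the first bound in part~(2) is actually cleaner than the paper's: where you obtain $(r-1)(2\ln p_{k+1}-1)\leq\ln(m\|A\|_\infty^2)$ in one line from $\ln r\leq r-1$, the paper reaches the same constant by iterating $n\leq g(m)+\tfrac m2\log_{p_{k+1}}n$ and summing a geometric series. Two steps, however, are asserted rather than proved. In part~(1), the passage from your inequality $(\lfloor r\rfloor-1)\ln\bigl((\lfloor r\rfloor-1)/e\bigr)\leq\ln(\sqrt m\|A\|_\infty)$ to the bound with constant $\tfrac{2(1+e)}{e}$ is not a ``standard rearrangement'': it is precisely where the two hypotheses $r\geq1+2e$ and $\ln(\sqrt m\|A\|_\infty)\geq e$ enter. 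The paper isolates this as a separate claim: from $x\ln x\leq y$ with $x=\tfrac{r-1}{2}$ and $x,y\geq e$ one gets $x\leq\tfrac{1+e}{e}\cdot\tfrac{y}{\ln y}$, using $\ln\ln x\leq\tfrac1e\ln x$ and the monotonicity of $t/\ln t$ on $[e,\infty)$. Your (stronger) intermediate inequality does imply the paper's, since for $u=r-1\geq2e$ one has $u\ln(u/e)\geq\tfrac u2\ln\tfrac u2$, so your route closes --- but you must actually supply this reduction or an analogue of the claim; as written the constant is unjustified.

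The genuine gap is the refined bound in part~(2) for $k=0$ and $n'\geq4\|A\|_\infty^2$, which you acknowledge you cannot extract. The paper's mechanism is the following: with $\alpha:=\log_2\|A\|_\infty$, $g(m):=m(1+\alpha)$ and $f(x):=\log_2 x$, the Hadamard step reads $n\leq g(m)+\tfrac m2f(n)$; the hypothesis $n\geq4\|A\|_\infty^2$ guarantees $\tfrac m2f(n)/g(m)\geq1$, so the inequality $\log_2(1+t)\leq t$ (valid for $t\geq1$) gives $f\bigl(g(m)+\tfrac m2f(n)\bigr)\leq f(g(m))+\tfrac{m}{2g(m)}f(n)$. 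Substituting the bound on $n$ into itself repeatedly and summing the geometric series with ratio $\tfrac{m}{2g(m)}=\tfrac{1}{2(1+\alpha)}$ yields $n\leq g(m)+\tfrac m2f(g(m))\cdot\tfrac{2(1+\alpha)}{1+2\alpha}$, which is the sole source of the weight $\tfrac{1+\alpha}{1+2\alpha}$. Your proposed ``balancing of two constraints with a weighted split'' does not constitute a proof of this, and I do not see how it produces the ratio of that geometric series; to complete this case you should adopt the self-substitution argument (equivalently, bound the fixed point of $n\mapsto g(m)+\tfrac m2\log_2 n$).
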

\begin{proof}
(1) 
For some full-row-rank $m'$-by-$n'$ submatrix $A'$ of $A$, 
the inequality of \Cref{sparsity-2}~(1) holds. We may assume that $A'=A$, $m'=m$ and $n'=n$. Let $r:= \lfloor \frac{n}{m}\rfloor$. 
If $r=1$ then there is nothing to prove. Otherwise, $r\geq 2$. Then by \Cref{sparsity-2}~(1) we have $$\Delta_m(A)\geq \prod_{i=1}^{r-1} (p_{k+2}+i-1)\geq \left(p_{k+2}+\left\lceil\frac{ r-2}{2}\right\rceil\right)^{\left\lceil\frac{ r-1}{2}\right\rceil}.$$ On the other hand, by Hadamard's inequality, $\sqrt{m}\|A\|_\infty\geq \Delta_m(A).$ 
Combining this inequality with the one above, and then taking the natural logarithm, we obtain $$\left\lceil\frac{r-1}{2}\right\rceil\ln\left(p_{k+2}+\left\lceil\frac{r-2}{2}\right\rceil\right) \leq \ln(\sqrt{m}\|A\|_\infty).$$ Let $x:=\frac{r-1}{2}$ and $y:=\ln(\sqrt{m}\|A\|_\infty)$. Then the inequality above implies that $x\ln(x)\leq y$.

\begin{claim} %claim 1
For $x,y\geq e$, we have
$x\leq \frac{1+e}{e}\cdot\frac{y}{\ln(y)}$.
\end{claim}
\begin{cproof}
Let $z:=x\ln(x)$. Then for $x\geq 1$, $$
\frac{z}{\ln(z)}= \frac{x\ln(x)}{\ln(x)+\ln\ln(x)}\geq \frac{x\ln(x)}{(1+1/e)\ln(x)}=\frac{e}{1+e}x
$$ where the middle inequality holds because $\ln\ln(x)\leq \frac{1}{e}\ln(x)$ for all $x\geq e$. Since the function $\frac{2t}{\ln(t)}$ is increasing on $t\in [e,\infty)$, and $y\geq z\geq e$, it follows that $\frac{1+e}{e}\cdot\frac{y}{\ln(y)}\geq \frac{1+e}{e}\cdot\frac{z}{\ln(z)}$, so the claim follows.~\end{cproof} 

Claim~1 proves the inequality of (1).

(2) 
For some full-row-rank $m'$-by-$n'$ submatrix $A'$ of $A$, 
the inequality of \Cref{sparsity-2}~(2) holds.
We may assume that $A'=A$, $m'=m$ and $n'=n$. Thus, we have that $$\sqrt{\det(AA^\top)}\geq p_{k+1}^{n-m}.$$
Suppose $a^1,\ldots,a^m\in \Z^n$ are the rows of $A$. By Hadamard's inequality,
$$
\sqrt{\det(AA^\top)}\leq \sqrt{\prod_{i=1}^m \|Aa^i\|_2}
\leq 
\sqrt{\prod_{i=1}^m (\sqrt{m}\sqrt{n}\|A\|^2_\infty)}
\leq 
\left(\sqrt{n}\|A\|_{\infty}\right)^{m}.
$$ 
Combining the inequalities above, and then taking logarithms base $p_{k+1}$, we obtain \begin{equation}\label{sparsity2-eq1}
n\leq m+m\log_{p_{k+1}}\left(\sqrt{n}\|A\|_\infty \right).
\end{equation} 
Let $f,g:\mathbb{R}_{\geq 1}\to \mathbb{R}_{\geq 0}$
be the functions defined as $g(x)=x+x\log_{p_{k+1}}\|A\|_\infty$ and $f(x):=\log_{p_{k+1}}x$. 
Then \eqref{sparsity2-eq1} may be rewritten as $n\leq g(m)+\frac{m}{2} f(n)$. Observe that $f$ is an increasing function.

\begin{claim} %claim 2
The following inequalities hold: \begin{enumerate}
\item[a.] $f(g(m)+\frac{m}{2} f(n))\leq f(g(m)) + \frac{m}{2\ln(p_{k+1})g(m)} f(n)$,
\item[b.] for $k=0$, if $n\geq 4\|A\|^2_\infty$, then $f(g(m)+\frac{m}{2} f(n))\leq f(g(m)) + \frac{m}{2g(m)} f(n)$.
\end{enumerate}
\end{claim}
\begin{cproof}
To see the inequalities, note that $\log_{p_{k+1}}(x+y) = \log_{p_{k+1}}(x)+\log_{p_{k+1}}(1+\frac{y}{x})\leq \log_{p_{k+1}}(x)+\frac{1}{\ln(p_{k+1})}\frac{y}{x}$ for all $x,y$ over which the LHS and RHS are defined.
Thus, $$
f\left(g(m)+\frac{m}{2} f(n)\right) = \log_{p_{k+1}}\left(g(m)+\frac{m}{2} f(n)\right)\leq \log_{p_{k+1}}g(m)+\frac{m\cdot f(n)}{2\ln(p_{k+1})g(m)} 
$$ thereby proving (a). When $k=0$ (i.e.\ $p_{k+1}=2$), we can replace $\log_{p_{k+1}}(1+\frac{y}{x})\leq \frac{1}{\ln(p_{k+1})}\frac{y}{x}$ by the improved inequality $\log_{p_{k+1}}(1+\frac{y}{x})\leq \frac{y}{x}$ as long as $\frac{y}{x}\geq 1$. Thus, after repeating the above argument with this improved inequality, we obtain (b).
\end{cproof}

Subsequently, \begin{align*}
n\leq g(m)+\frac{m}{2} f(n)
&\leq g(m)+\frac{m}{2}f\left(g(m)+\frac{m}{2} f(n)\right)\quad\text{since $f$ is increasing}\\ \\
&\leq g(m)+\frac{m}{2}f(g(m)) + \frac{m}{2}\frac{m}{2\ln(p_{k+1})g(m)} f(n) \quad\text{by part (a) of Claim~2}\\ 
&\quad\vdots\\ 
&\leq g(m)+\frac{m}{2}f(g(m)) \sum_{t=0}^{\infty} \left(\frac{m}{2\ln(p_{k+1})g(m)}\right)^t
\\ \\
&= g(m)+ \frac{m}{2}f(g(m))\cdot \frac{2\ln(p_{k+1})g(m)}{2\ln(p_{k+1})g(m)-m}.
\end{align*} Substituting for $g(m)$ and $f(g(m))$, and dividing both sides by $m$, we get the following inequality: 
\begin{align*}
\frac{n}{m}&\leq 
1+\log_{p_{k+1}}\|A\|_\infty
+\frac12\log_{p_{k+1}}(m+m\log_{p_{k+1}}\|A\|_\infty)\cdot \frac{1+\log_{p_{k+1}}\|A\|_\infty}{1+\log_{p_{k+1}}\|A\|_\infty-\frac{1}{2\ln(p_{k+1})}}
\\ \\
&\leq 
1+\log_{p_{k+1}}\|A\|_\infty
+\left(\log_{p_{k+1}} m+\frac{\log_{p_{k+1}}\|A\|_\infty}{\ln(p_{k+1})}\right)\cdot \frac{1+\log_{p_{k+1}}\|A\|_\infty}{2+2\log_{p_{k+1}}\|A\|_\infty - \frac{1}{\ln(p_{k+1})}}\\ \\
&\leq 
1+\log_{p_{k+1}}\|A\|_\infty
+\left(\log_{p_{k+1}} m+\frac{\log_{p_{k+1}}\|A\|_\infty}{\ln(p_{k+1})}\right)\cdot \frac{1}{2 - \frac{1}{\ln(p_{k+1})}}\\ \\
&= 1+ \frac{\ln{m}+2\ln{\|A\|_\infty}}{2\ln{p_{k+1}}-1}.
\end{align*} 
If $k=0$ and $n\geq 4\|A\|^2_\infty$, then we can use part (b) instead of part (a) of Claim~2 in the inequalities above, and obtain
$$n\leq g(m)+ \frac{m}{2}f(g(m))\cdot \frac{2g(m)}{2g(m)-m}.$$ Substituting for $g(m)$ and $f(g(m))$, and dividing both sides by $m$, we get the following inequality: 
\begin{align*}
\frac{n}{m}&\leq 
1+\log_{2}\|A\|_\infty
+\frac12\log_{2}(m+m\log_{2}\|A\|_\infty)\cdot \frac{1+\log_{2}\|A\|_\infty}{1+2\log_{2}\|A\|_\infty-\frac{1}{2}}
\\ \\
&\leq 
1+\log_{2}\|A\|_\infty
+\left(\log_{2} m+\log_{2}\|A\|_\infty\right)\cdot \frac{1+\log_{2}\|A\|_\infty}{1+2\log_{2}\|A\|_\infty},
\end{align*} as required.
\end{proof}

Let us mention a few notable cases. First,
the upper bound of \Cref{sparsity-2-asymptotics}~(1) for $k=0$ is interesting in its own right, and was recently obtained in~\cite{Dubey23+} (their upper bound is given in terms of big $O$ notation, in contrast to our bound). Secondly, for $k=0$, thanks to an improved asymptotic analysis, the upper bound of \Cref{sparsity-2-asymptotics}~(2) gives a minor improvement over the guarantee of $2m\log_2(2\sqrt{m}\|A\|_\infty)$ in \cite{Aliev17}.
Thirdly, for $k=0$ and $\|A\|_\infty=1$, the upper bound of \Cref{sparsity-2-asymptotics}~(2) simplifies to the following.

\begin{CO}
Let $A\in \{0,\pm 1\}^{m\times n},b\in \Z^m$.
If $\min\{w^\top x:Ax=b,x\geq \0, x\in \Z^n\}$ has an optimal solution, then it has one with support size at most $m(1+\log_2 m)\approx m(1+1.45\ln{m})$.\qed
\end{CO}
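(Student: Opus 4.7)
The corollary is a direct specialization of \Cref{sparsity-2-asymptotics}~(2) to the setting $\|A\|_\infty = 1$, $k = 0$, so my plan is simply to plug these values into the second (improved) bound given there, namely
\[
r \leq 1 + \log_{2}\|A\|_\infty + \log_{2}(m\|A\|_\infty)\cdot \frac{1+\log_{2}\|A\|_\infty}{1+2\log_{2}\|A\|_\infty},
\]
valid under the side condition $n' \geq 4\|A\|_\infty^{2}$. With $\|A\|_\infty = 1$ both $\log_{2}\|A\|_\infty$ terms vanish, the fraction collapses to $1/1 = 1$, and the bound reduces cleanly to $r \leq 1 + \log_{2} m$. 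Multiplying through by $m$ gives the support-size bound $n' \leq m(1+\log_{2} m)$, and the numerical approximation $1+\log_{2} m \approx 1+1.45\ln m$ in the statement is just $1/\ln 2 \approx 1.4427$.

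The only wrinkle is the side condition $n' \geq 4\|A\|_\infty^{2} = 4$ required by the second bound, so I would handle the small cases separately. If $n' < 4$ and $m \geq 2$, then $m(1+\log_{2} m) \geq 2(1+\log_{2} 2) = 4 > n'$ and the conclusion holds trivially. If $m = 1$, I would instead invoke the first bound of \Cref{sparsity-2-asymptotics}~(2), which is valid without any side condition: specializing to $\|A\|_\infty = 1$ and $m=1$ yields $r \leq 1 + (\ln 1 + 0)/(2\ln 2 - 1) = 1$, so $n' \leq 1 = m(1+\log_2 m)$, as required.

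There is no real obstacle: the heavy lifting (Siegel's Lemma via \Cref{siegel-LE}, the asymptotic manipulation of Hadamard's inequality, and the improved logarithmic estimate that is available precisely for $p_{k+1}=2$) has already been done inside the proof of \Cref{sparsity-2-asymptotics}~(2). The corollary merely records the closed-form consequence in the natural and widely-used special case of $0,\pm 1$ matrices.
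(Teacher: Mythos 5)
Your proposal is correct and matches the paper's own (implicit) derivation: the corollary is stated with no proof precisely because it is the specialization $k=0$, $\|A\|_\infty=1$ of the second bound in \Cref{sparsity-2-asymptotics}~(2), under which both $\log_2\|A\|_\infty$ terms vanish and the bound collapses to $1+\log_2 m$. Your explicit handling of the side condition $n'\geq 4\|A\|_\infty^2$ (disposing of $n'<4$ trivially and of $m=1$ via the first, unconditional bound) is a careful touch the paper leaves to the reader, and it is correct as written.
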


Finally, for dyadic linear programs, we obtain the following, bringing this subsection to an end.

\begin{CO}\label{sparsity-dyadic-CO}
Let $A\in \Z^{m\times n},b\in \Z^m$ and $w\in \R^n$. 
If $\min\{w^\top x:Ax=b,x\geq \0,x \text{ dyadic}\}$ has an optimal solution, then it has one with support size at most $
m(1+\ln(m\|A\|^2_\infty)/(2\ln{3}-1))\approx
m(1+0.84\ln{m}+1.68\ln\|A\|_\infty)$.
\qed
\end{CO}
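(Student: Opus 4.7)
The plan is to deduce this corollary as a direct specialization of \Cref{sparsity-2-asymptotics}~(2). The key observation is that the dyadic numbers are precisely the $[p_1]$-adic numbers, since $p_1=2$ and a rational is $[2]$-adic exactly when its denominator is a power of $2$. Therefore the dyadic linear program $\min\{w^\top x:Ax=b,x\geq\0,x\text{ dyadic}\}$ is the same optimization problem as $\min\{w^\top x:Ax=b,x\geq\0,x\text{ $[p_1]$-adic}\}$, and the hypothesis that it has an optimal solution means we are in a position to invoke \Cref{sparsity-2-asymptotics}~(2) with the parameter $k=1$.

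With $k=1$, we have $p_{k+1}=p_2=3$, so the first bound of \Cref{sparsity-2-asymptotics}~(2) yields an optimal solution whose support size $n'$ satisfies
\[
\frac{n'}{m}\;\leq\;1+\frac{\ln m+2\ln\|A\|_\infty}{2\ln 3-1}\;=\;1+\frac{\ln(m\|A\|_\infty^2)}{2\ln 3-1}.
\]
Multiplying through by $m$ gives the exact bound $n'\leq m\bigl(1+\ln(m\|A\|_\infty^2)/(2\ln 3-1)\bigr)$ stated in the corollary.

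All that remains is to verify the stated decimal approximation. Numerically, $2\ln 3-1\approx 1.1972$, so $1/(2\ln 3-1)\approx 0.8353$ and $2/(2\ln 3-1)\approx 1.6706$, which rounds to the claimed coefficients $0.84$ and $1.68$. There is no real obstacle here: the work has already been done in \Cref{sparsity-2-asymptotics}, and this corollary just plugs in the right value of $k$ and evaluates the constant $2\ln 3-1$.
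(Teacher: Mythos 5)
Your proposal is correct and is exactly the paper's intended argument: the corollary is stated with an immediate \qed because it is the specialization of \Cref{sparsity-2-asymptotics}~(2) to the dyadic case, i.e.\ $k=1$, $p_{k+1}=p_2=3$, giving $n'\leq m\bigl(1+\ln(m\|A\|_\infty^2)/(2\ln 3-1)\bigr)$. Your identification of dyadic with $[p_1]$-adic and your numerical evaluation of the constants $1/(2\ln 3-1)\approx 0.84$ and $2/(2\ln 3-1)\approx 1.68$ are both accurate.
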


\subsection{Examples revisited}\label{subsec:examples-revisited}

\begin{customEG}{\ref{full-support-eg-1} revisited}
Consider the example $(A\otimes I)y = \1, y \text{ $[p_k]$-adic}$ with $mn$ variables and $m$ equations. Let $q_i:=p_{k+i}$ for $i\in [n]$.
Observe that $$\Delta_m(A\otimes I)
= \left(\frac{Q}{q_1}\right)^m= (p_{k+2}p_{k+3}\cdots p_{k+n})^m.
$$ Observe that the RHS is precisely the lower bound provided by \Cref{sparsity-2}~(1). This shows that (a) this example is extremal, and (b) the lower bound given on $\Delta_m$ by \Cref{sparsity-2}~(1) cannot be improved in the case when the number of variables is a multiple of $m$.
\end{customEG}

\begin{customEG}{\ref{full-support-eg-1.5} revisited}
Consider the example 
$\min\{0:(A^s\otimes I) y = \1, y\geq \0, y \text{ $[p_k]$-adic}\}$ with $mn$ variables and $m$ equations. Let $q_i:=p_{k+i}$ for $i\in [n]$.
Let $B:=A^s\otimes I$ and consider the inequality of \Cref{sparsity-2}~(2). The LHS is $$
\left(\frac{\sqrt{\det(BB^\top)}}{\gcd(B)}\right)^{1/m} = \sqrt{\sum_{i=1}^n \left(\frac{Q}{q_i}\right)^2} = \Theta\left(p_{k+2}\cdots p_{k+n}\sqrt{n}\right)
$$ while the RHS is $
p_{k+1}^{n-1}$, so there is a multiplicative gap of $$
\Theta\left(\frac{p_{k+2}}{p_{k+1}}\cdots \frac{p_{k+n}}{p_{k+1}}\sqrt{n}\right)
$$
between the LHS and RHS for this example.
\end{customEG}

\begin{customEG}{\ref{full-support-eg-2} revisited} For this example we have $k=0$. Consider the integer linear program 
$$\min\left\{\1^\top y: (A\otimes I)y=(2^n-1)\cdot \1, y\geq \0 \text{ and integral}\right\}.$$
Let $B:=A\otimes I$ and consider the inequality of \Cref{sparsity-2}~(2). 
The LHS is $$
\left(\frac{\sqrt{\det(BB^\top)}}{\gcd(B)}\right)^{1/m} =\sqrt{\sum_{i=1}^n \left(2^{i-1}\right)^2}\in \left[2^{n-1}, \sqrt{\frac43} \cdot 2^{n-1}\right)
$$ while the RHS is $2^{n-1}$, so there is a constant multiplicative gap of $\sqrt{4/3}$ between the LHS and RHS for this example. This shows that (a) this example is extremal up to the constant factor, and (b) the lower bound given in \Cref{sparsity-2}~(2) cannot be improved beyond the constant factor, in the case when $k=0$.
\end{customEG}
% !TEX root = main.tex
%last revised Sep 6, 2023 by Ahmad
\section{Concluding remarks and future research}\label{section:conclusion}

In this paper, we studied dyadic linear programming and its extension to $\cL$-linear programming, where $\cL$ is a dense subset of $\R$ closed under addition and negation. Two important extensions were $p$-adic and $[p]$-adic linear programs, for a prime $p$. 

We laid the foundation for $\cL$-linear programming by characterizing feasibility, stating optimality conditions, classifying all the possible outcomes and providing concise certificates in each case. A distinguishing feature was that unlike linear programs, an $\cL$-linear program may have an optimal value that is converged to but never attained within the feasible region.

We proved that under mild assumptions on $\cL$, namely that $\cL$ comes with a membership oracle and contains all $p$-adic numbers for some explicitly given prime $p$, an $\cL$-linear program can be solved in polynomial time. In fact, we established a constant factor equivalence between the running times of solving an $\cL$-linear program and  a linear program, with blackbox reductions going in either direction. 

Going beyond the blackbox reductions, if we are given deeper access to an algorithm for solving LPs which is guaranteed to find strictly complementary solutions (when the instance has an optimal solution), we can modify such an algorithm in part by inserting our subroutines in suitable places so that the LP algorithm is run only once (with these inserted subroutines) to solve the corresponding $\cL$-linear program. Such modifications can be particularly straightforward for the two-phase algorithms for LPs (as one can follow the analysis in \Cref{sec-optimization} to see how to modify the LP algorithm).

Our blackbox approach can also be useful in solving \emph{$\cL$-convex programs}. For a special class of  
convex optimization problems at hand, if we are able to compute affine hulls (of the feasible region and the optimal face) and
obtain rational representations for them (when possible) then, we can solve the $\cL$-convex program using the approach in \Cref{sec-optimization}. We do not need the convex programming instance to satisfy strict complementarity, but we would require the convex optimization algorithm to compute rational vectors (when they exist)  in the relative interiors of the corresponding sets. Limits of sequences of solutions generated by many interior-point algorithms for convex optimization lie in the relative interiors of the corresponding sets.

An irony of our polynomial algorithm, and even some of the foundational results, is that even though we set to solve an $\cL$-linear program, the numbers encountered throughout may in fact fall outside $\cL$. For instance, in the theorem of the alternatives, \Cref{alternative}, the non-existence certificate $u$ must inevitably be outside $\cL$. That said, the algorithm does successfully characterize and solve the various outcomes of an $\cL$-linear program, and provides the first step for finding a polynomial algorithm where all the numbers involved in the computations belong to $\cL$.

Given $Ax\leq b, x \text{ $p$-adic}$, that is feasible, what is the smallest $k\in \Z_{\geq 0}$ such that there is a $1/p^k$-integral solution? While determining $k$ is NP-hard, we provided upper bounds on $k$ that are polynomial in $n$ and the encoding size of $A$. A particular case of interest comes from combinatorial optimization. Given a graph $G=(V,E)$ and a nonempty set $T\subseteq V$ of even size, a \emph{$T$-join} is an edge set whose odd-degree vertices coincides with $T$. 
It is known that the \emph{fractional $T$-join packing} problem
$$
\max\left\{\1^\top y :
\sum\left(y_J:J\ni e\right) \leq 1 ~\forall e\in E;
y_J\geq 0~\forall \text{ $T$-joins $J$}\right\}$$ has an optimal solution that is dyadic, i.e., $\frac{1}{2^k}$-integral for some integer $k\geq 0$~\cite{Abdi-Tjoins}. The proof provides no upper bound guarantee on $k$. That said, it has been conjectured by Seymour that $k\leq 2$ (\cite{Cornuejols01}, Conjecture 2.15, also see Schrijver \cite{Schrijver03} 79.3e). An upper bound of $k\leq c\log(|E|)$ for some universal constant $c$, also remains open. The conjecture of Seymour combined with our approach in the current manuscript, suggests a study of classes of linear programs with integral data such that for every integral objective function vector, the primal has a $\frac{1}{2^{k_1}}$-integral optimal solution (whenever it has an optimal solution) and the dual has a $\frac{1}{2^{k_2}}$-integral optimal solution, for some fixed pair of nonegative integers $k_1$ and $k_2$. Seymour's Conjecture above for ideal clutters corresponds to the special case $k_1:=0$, $k_2:=2$.

Given a $[p]$-adic linear program $\min\{w^\top x:Ax=b,x\geq \0, x \text{ $[p]$-adic}\}$ that has an optimal solution, where $A$ has $m$ rows, we provided upper bound guarantees on the support size of an optimal solution, where the bound depended polynomially on $m,p$ and the encoding size of $A$. A helpful twist in this case was extending the notion of $[p]$-adic numbers to include the case of $p=1$, by declaring the $[1]$-adic numbers as the integers. As such, we obtained a spectrum of guarantees ranging from ILPs ($p=1$) on the one end, passing through dyadic linear programs ($p=2$), and reaching LPs ($p=\infty$) on the other and matching Carath\'{e}odory's bound.
Along the way, we also provided tight upper bounds on the support size of a solution to a feasible $[p]$-adic linear system of the form $Ax=b, x \text{ $[p]$-adic}$.

While our upper bound guarantees for $[p]$-adic linear programs are tight for the two ends of the spectrum, $p=1$ (\Cref{full-support-eg-2}) and $p=\infty$, there remains a gap between our best lower bound (\Cref{full-support-eg-1.5}) and our upper bound for $2\leq p<\infty$, as discussed in \Cref{subsec:examples-revisited}. We believe that due to the density of the feasible region for $p\geq 2$, the upper bounds in this case should look more like the upper bounds for $[p]$-adic linear systems.

In the special case of $\|A\|_\infty=1$, the best lower bound on the support size of an optimal solution to a $[p]$-adic linear program that we can show is at most $O(m)$, while our upper bound is $O(m\ln m)$. Closing the gap in this case remains an intriguing open question. An important special case comes in the dyadic ($p=2$) case from the fractional $T$-join packing problem mentioned above. By developing a column generation technique for solving dyadic linear programs, and by leveraging tools from matching theory, we achieve a matching upper bound of $O(m)$ (note $m=|E|$ in this case)~\cite{DLP-sequel}.

We have used the size of the input, in particular $\ln\|A\|_{\infty}$ to state our results (for bounds on computational complexity as well as support size bounds etc.). However, for specially structured instances, there are better complexity measures, capturing more intrinsic properties of the instance. This typically yields tighter and more insightful bounds. Thus, it would be fruitful to pursue this direction in future research.

Let $a^1,\ldots,a^n\in \Z^m$. The set $\{a^1,\ldots,a^n\}$ is a \emph{dyadic generating set for a cone (DGSC)} if every integral vector in the conic hull of the vectors can be expressed as a dyadic conic combination of the vectors. This notion was coined and studied in our first work on dyadic linear programming~\cite{acgt23}. 
Given a DGSC $\{a^1,\ldots,a^n\}$ and an integral vector $b$ in the conic hull, we know that $b$ can be expressed as a dyadic conic combination of the vectors. What is the fewest number $k$ of nonzero coefficients in such a representation? While \Cref{sparsity-dyadic-CO} gives an upper bound of $O(m\ln(m\|A\|^2_{\infty}))$ on $k$, we conjecture that there is a $O(m)$ upper bound on $k$. The rationale behind this comes from the observation that a DGSC may be viewed as the dyadic analogue of \emph{Hilbert bases} for integer linear programming~\cite{Giles79} for which the analogous upper bound guarantee is $2m-2$~\cite{Sebo90}.

Finally, we propose a weakening of Seymour's dyadic conjecture.

\begin{CN}
Let $A\in \{0,1\}^{m\times n}$ be an ideal matrix, and for some $c\in \Z^n_{\geq 0}$, let $\tau_c:=\min\{c^\top x:Ax\geq \1,x\geq \0\}$. Let $p$ be the largest prime in $[\tau_c]$. Then $\max\{\1^\top y:A^\top y\leq c,y\geq \0\}$ has a $[p]$-adic optimal solution. 
\end{CN}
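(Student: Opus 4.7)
The plan is to leverage the reduction-to-affine-hulls framework of Section~2, together with the structure of ideal 0-1 matrices. Let $\cL$ denote the set of $[p]$-adic numbers, which forms a dense subgroup of $(\R,+)$ by \Cref{L-construction}, and let $F$ be the optimal face of the dual LP $\max\{\1^\top y:A^\top y\leq c,\,y\geq\0\}$. By \Cref{density-convex}, $F$ contains a $[p]$-adic point if and only if $\aff(F)$ does. Choose a strictly complementary primal--dual pair $(x^\star,y^\star)$; since $A$ is ideal, $x^\star$ may be taken in $\Z_{\geq 0}^n$, and $\1^\top y^\star=c^\top x^\star=\tau_c$. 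Setting $I:=\supp(y^\star)$, $J:=\supp(x^\star)$, and $B:=A_{I\times J}$, complementary slackness yields
\[
\aff(F)=\bigl\{y\in\R^m:\; y_i=0\;\forall i\notin I,\;\;(A^\top y)_j=c_j\;\forall j\in J\bigr\}.
\]

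Applying \Cref{alternative} to this integral system, $\aff(F)$ contains no $[p]$-adic point if and only if there exists $\mu\in\R^J$ with $B\mu\in\Z^I$ and $c_J^\top \mu\notin\cL$. Using the tight dual relations $c_j=(A^\top y^\star)_j$ for $j\in J$, one rewrites
\[
c_J^\top \mu=(y^\star)^\top B\mu=\sum_{i\in I}y^\star_i z_i,\qquad z:=B\mu\in\Z^I.
\]
Hence the conjecture is equivalent to the existence of \emph{some} optimal dual $y^\star$ whose entries all lie in $\cL$. By basic LP theory, any vertex of the optimal face has coordinates of the form $(B_0^{-1}d)_k$ for a square nonsingular submatrix $B_0$ of the dual constraint matrix and an integer right-hand-side $d$; thus denominators divide $|\det(B_0)|$. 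The conjecture then reduces to exhibiting some basic optimum whose associated $B_0$ has $|\det(B_0)|$ free of prime factors greater than~$\tau_c$.

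The natural route is induction on $\tau_c$. The base case $\tau_c\leq 1$ is immediate: the dual value is attained by some standard basis vector $e_i$ with $A^\top e_i\leq c$, which is integral hence $[p]$-adic. For the inductive step, one would construct a $[p]$-adic vector $y$ with $\0\leq y\leq y^\star$ coordinate-wise, $\1^\top y\in\Z_{\geq 1}$, so that the residual problem with updated capacities $c':=c-Ay$ has optimum $\tau_{c'}=\tau_c-\1^\top y<\tau_c$ and remains amenable to the induction hypothesis (via clutter minor operations that preserve ideality, such as contraction and deletion). Combining $y$ with a $[p']$-adic optimum, $p'\leq p$, of the residual, obtained by induction, yields the desired full $[p]$-adic optimal dual.

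The principal obstacle lies precisely in executing this peeling step. Seymour's original conjecture is the strict form in which each peeled layer must be $\frac{1}{2}$-integer, and it remains open; the present relaxation permits layers of the form $\frac{1}{q}$ for \emph{any} prime $q\leq\tau_c$, which is significantly more flexible and motivates hope. A concrete angle of attack is through Lehman's theorem on minimally non-ideal clutters: examining a hypothetical minimal counterexample, exploiting the tight structural constraints on its core, and using the $\tau_c$-dependent flexibility in choosing the peeled layer. Even establishing the first non-trivial cases $\tau_c\in\{2,3\}$ --- producing a dyadic or $[3]$-adic optimum, respectively, when the primal value is $2$ or $3$ --- would mark significant progress, and we expect it to require substantial new ideas.
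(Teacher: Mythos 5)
This statement is not a theorem in the paper: it is stated as a \emph{conjecture} (a proposed weakening of Seymour's dyadic conjecture), the paper offers no proof, and it records only that the special case of dijoins of a digraph has been verified elsewhere. So there is no proof of the paper's to compare yours against, and your proposal --- as you yourself acknowledge in the final paragraphs --- is not a proof but a plan of attack whose decisive step is missing. The framework you set up (reduce to the affine hull of the optimal dual face via \Cref{density-convex}, then apply \Cref{alternative}) is exactly the machinery the paper develops, and it correctly shows that the conjecture is equivalent to $\aff(F)$ containing a $[p]$-adic point; but that equivalence is where the easy part ends. The ``peeling'' step you describe --- producing a $[p]$-adic $y$ with $\0\leq y\leq y^\star$, $\1^\top y\in\Z_{\geq 1}$, whose residual instance stays ideal and has strictly smaller $\tau_c$ --- is not a lemma you can defer: constructing even a single such layer is essentially the full difficulty of the conjecture (in Seymour's original form the layer must be half-integral, and that has been open since 1975). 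Nothing in Sections 2--3 of the paper supplies it, since those results characterize \emph{when} a $[p]$-adic point exists in terms of a certificate, rather than guaranteeing one exists for ideal systems.

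Two smaller technical points in your setup deserve care if you pursue this. First, you ask for a strictly complementary pair $(x^\star,y^\star)$ with $x^\star$ integral; strict complementarity forces $x^\star$ into the relative interior of the optimal primal face, so it generally cannot simultaneously be an integral vertex, and using a non--strictly-complementary $x^\star$ gives only $\supp(x^\star)\subseteq J$ rather than equality, which changes the description of $\aff(F)$. Second, the reduction of the conjecture to ``some basic optimum whose basis determinant has no prime factor exceeding $\tau_c$'' is a sufficient condition, not an equivalent one: a $[p]$-adic optimal solution need not be a vertex of the optimal face, and by \Cref{density-convex} it is enough for the affine hull to contain a $[p]$-adic point even when every vertex fails. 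Restricting attention to basic solutions may therefore be strictly harder than the conjecture itself.
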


This conjecture has been verified for the clutter of dijoins of a digraph (\cite{Hwang22}, Theorem 2.13).

% \textcolor{red}{different size measures}

% \textcolor{red}{%the $\frac14$-integral conjecture, snarks. more generally, 
% there is a framework for having optimal primal and dual solutions with bounded (possibly different) guarantees}

\section*{Acknowledgements}

We would like to acknowledge pleasurable discussions with Yatharth Dubey and Siyue Liu about sparsity aspects of this work; progress towards \Cref{det-lower-bound} was most fruitful. This work was supported in part by ONR grant N00014-22-1-2528, EPSRC grant EP/X030989/1 and Discovery Grants from NSERC. 

{\small \bibliographystyle{abbrv}\bibliography{references}}

\appendix

% !TEX root = main.tex
% last revised Sep 6, 2023 by Ahmad
\section{Primer on lattices and determinants}\label{sec:primer-lattices}

Generally we follow \cite{Martinet03,Beckenbach61,Prasolov94} as reference texts on lattices, inequalities, and equalities in linear algebra, respectively. More specifically, we use the following specific results in the paper.

\paragraph{Hermite normal form.} 
Let $A\in \Z^{m\times n}$ be a matrix of full row rank.
Then $A$ can be brought into \emph{Hermite normal form} by means of \emph{elementary unimodular column operations}. In particular, there exists an $n$-by-$n$ unimodular matrix $U$ such that $AU=(B~\0)$, where $B$ is a non-singular $m$-by-$m$ matrix, and $\0$ is an $m$-by-$(n-m)$ matrix with zero entries. See (\cite{ConfortiCornuejolsZambelli2014}, Section 1.5.2) or (\cite{Schrijver98}, Chapter 4) for more details.

\paragraph{Matrix GCD.} Denote by $\gcd(A)$ the greatest common divisor (GCD) of all order-$m$ minors of~$A$. This quantity can be found as follows. First, bring $A$ into Hermite normal form after applying elementary unimodular column operations, say we obtain $(B~\0)$ where $B$ is a square matrix. Then $\gcd(A)=\gcd(B)=|\det(B)|$, where the first equality can be readily checked by the reader, and the second one follows immediately from definition.

\paragraph{Lattice ``closure" and index.}
Let $L$ be an $m$-dimensional lattice contained in $\Z^n$, and let $\overline{L}:=\aff(L)\cap \Z^n$.
Clearly, $\overline{L}$ is a lattice containing $L$. Observe that within the affine hull of $L$, and confined to the integers, $\overline{L}$ is the maximal lattice containing $L$. It is known that $\overline{L}$ can be partitioned into a number of integral linear translations of $L$; this number is denoted as the \emph{index} $[\overline{L}:L]\in \Z_{\geq 1}$. Thus, $L=\overline{L}$ if and only if $[\overline{L}:L]=1$.

\paragraph{Lattice determinant and orthogonal complement.}
The \emph{determinant of $L$}, denoted $\det(L)$, is the $m$-dimensional volume of its fundamental parallelepiped. The inverse of $\det(L)$ may be used to measure the ``density" of $L$: the smaller the determinant, the larger the density. It is known that $\det(L) = [\overline{L}:L]\det(\overline{L})$~(see \cite{Martinet03}, Proposition 1.1.5~(4)). The \emph{orthogonal complement} of $L$ is the lattice $L^\perp:=\left\{y\in \Z^n: y^\top x=0~~\forall~x\in L\right\}$. It can be readily checked that $L^\perp$ has index $1$, that is, $L^\perp=\overline{L^\perp}$. It is known that $\det(L^\perp) = \det(\overline{L})$ (see \cite{Martinet03}, Proposition 1.9.8).

\paragraph{Lattice formulas.} Let $C\in \Z^{n\times m}$ be a matrix with $m$ linearly independent columns, and consider the lattice $L:=\left\{C y:y\in \Z^m\right\}\subseteq \Z^n$. Let us find $\det(L),\overline{L},[\overline{L}:L]$ and $L^\perp$ in terms of~$C$. First, the fundamental parallelepiped of $L$ is $\left\{C \lambda:\0\leq \lambda<\1\right\}$, whose $m$-dimensional volume is $\det(L)$. Subsequently, it can be readily checked that $\det(L)^2$ is the Gram determinant of the columns of $C$, that is, $\det(L)=\sqrt{\det(C^\top C)}$. 
It can be readily seen that $\overline{L} = \{Cy:y\in \mathbb{R}^m\}\cap \Z^n$. By turning $C^\top$ into Hermite normal form after applying elementary unimodular column operations, it can be shown that $[\overline{L}:L] = \gcd(C)$~(see \cite{Martinet03}, Proposition 1.1.5~(4)). Finally, observe that $L^\perp = \left\{x\in \Z^n:C^\top x=\0\right\}$. By combining the equalities provided thus far, we see that $\det(L^\perp) = \sqrt{\det(C^\top C)}/\gcd(C)$.

\paragraph{Lattice-free sets.} Observe that $P:=\{C\lambda:-\1<\lambda<\1\}$ is \emph{lattice-free}, that is, $P$ contains no nonzero vector from the lattice $L$. 
The set $P$ is the union of $2^m$ reflections of the fundamental parallelepiped of $L$, whose relative interiors are pairwise disjoint, so $\vol_m(P) = 2^m \det(L)$. Observe further that $P$ is a convex set that is symmetric about the origin. An important result of Minkowski from the Geometry of Numbers states that among all such sets which are lattice-free, $P$ reaches the maximum $m$-dimensional volume. 

\begin{MFT}[see \cite{Martinet03}, Theorem 2.7.1]
Let $L\subseteq \Z^n$ be an $m$-dimensional lattice. Let $C$ be a convex set that is symmetric about the origin. If $C\cap L=\{\0\}$, then $\vol_m(C)\leq 2^m\det(L)$.
\end{MFT}

\paragraph{Hadamard's inequality.} 
Let $B\in \mathbb{Z}^{m\times m}$ be a matrix with columns $b^1,\ldots,b^m$. Observe that $|\det(B)|$ is the volume of the parallelepiped with sides $b^1,\ldots,b^m$, implying in turn the well-known \emph{Hadamard inequality}, stating that $|\det(B)|\leq \prod_{i=1}^m \|b^i\|_2$ (\cite{Hadamard1893}, see~\cite{Beckenbach61}, Chapter 2, \S11). Subsequently, since $\|b^i\|_2\leq \sqrt{m}\|b^i\|_\infty$ for each $i\in [m]$, it follows that $
|\det(B)|^{1/m}\leq \sqrt{m}\|B\|_\infty.$

\paragraph{Minors of the adjugate matrix.} Let $U$ be an $n$-by-$n$ matrix. The \emph{adjugate of $U$}, denoted $\mathrm{adj}(U)$, is the $n$-by-$n$ matrix such that $U \mathrm{adj}(U) = \det(U) I$. It is well-known that for all $i,j\in [n]$, $\mathrm{adj}(U)_{ji}$ is a cofactor of $U$, namely, it is $(-1)^{i+j}$ times the determinant of the submatrix of $U$ obtained after removing row $i$ and column $j$. What is less known is an extension of this known attributed to Jacobi, which expresses every minor of $\mathrm{adj}(U)$ as a cofactor of the original matrix $U$.

\begin{JT}[see \cite{Prasolov94}, Theorem 2.5.2]
Let $U$ be an $n$-by-$n$ matrix, and $m\in [n-1]$. For row labels $I\subseteq [n]$ and column labels $J\subseteq [n]$ of $U$ such that $|I|=|J|=m$, we have 
$$
\det(\mathrm{adj}(U)_{J\times I}) = 
\mathrm{sgn}(\sigma)
\det(U_{\overline{I}\times \overline{J}}) \det(U)^{m-1}.
$$ where $\sigma:[n]\to [n]$ is the permutation defined as $\sigma(i_k) = j_k$ for all $k\in [n]$, for the following labelling of the rows and columns of $U$: the rows$/$columns of $U_{I\times J}$ are labelled $\{i_1,\ldots,i_m\}/\{j_1,\ldots,j_m\}$ from top to bottom$/$left to right, and the rows$/$columns of $U_{\overline{I}\times \overline{J}}$ are labelled $\{i_{m+1},\ldots,i_n\}/\{j_{m+1},\ldots,j_n\}$ from top to bottom$/$left to right.
\end{JT}

\end{document}